\DeclareTextSymbolDefault{\textquotedbl}{T1}
\newcommand{\lyxdot}{.}
\theoremstyle{plain}
\newtheorem{thm}{\protect\theoremname}
\newtheorem{thm}{\protect\theoremname}[section]
\theoremstyle{remark}
\newtheorem{rem}{\protect\remarkname}
\newtheorem{rem}{\protect\remarkname}[section]
\theoremstyle{corollary}
\newtheorem{cor}{\protect\corollaryname}
\newtheorem{cor}{\protect\corollaryname}[section]
\theoremstyle{plain}
\newtheorem{lem}{\protect\lemmaname}
\newtheorem{lem}{\protect\lemmaname}[section]
\theoremstyle{definition}
\newtheorem{example}{\protect\examplename}
\newtheorem{example}{\protect\examplename}[section]
\providecommand{\corollaryname}{Corollary}
\providecommand{\examplename}{Example}
\providecommand{\lemmaname}{Lemma}
\providecommand{\remarkname}{Remark}
\providecommand{\theoremname}{Theorem}
\begin{document}

\title{Unconditional stability and error analysis of an Euler IMEX-SAV scheme
for the micropolar Navier-Stokes equations}

\author[zzu,zzu1]{Xiaodi Zhang}

\ead{zhangxiaodi@lsec.cc.ac.cn}

\author[huel]{Xiaonian Long\corref{cor1}}
\ead{longxiaonian@lsec.cc.ac.cn}

\cortext[cor1]{Corresponding author}

\address[zzu]{Henan Academy of Big Data, Zhengzhou University, Zhengzhou 450052,
	China.}

\address[zzu1]{School of Mathematics and Statistics, Zhengzhou University, Zhengzhou
	450001, China.}

\address[huel]{College of Mathematics and Information Science, Henan University of Economics and Law, Zhengzhou, 450047, China.}

\begin{frontmatter}{}
\begin{abstract}
In this paper, we consider numerical approximations for solving the
micropolar Navier-Stokes (MNS) equations, that couples the Navier-Stokes
equations and the angular momentum equation together. By combining
the scalar auxiliary variable (SAV) approach for the convective terms
and some subtle implicit-explicit (IMEX) treatments for the coupling
terms, we propose a decoupled, linear and unconditionally energy stable
scheme for this system. We further derive rigorous error estimates
for the velocity, pressure and angular velocity in two dimensions
without any condition on the time step. Numerical examples are
presented to verify the theoretical findings and show the performances of the scheme.
\end{abstract}
\begin{keyword}
micropolar Navier-Stokes equations; implicit-explicit schemes; energy
stability; error estimates, scalar auxiliary variable
\end{keyword}

\end{frontmatter}{}

\section{Introduction}

Let $\Omega$ be a convex polygonal/polyhedral with boundary
$\Gamma\coloneqq\partial\Omega$ in $\mathbb{R}^{d},\,d=2,3$.
In this paper, we consider numerical approximation of the following
MNS equations:
\begin{subequations}
\begin{align}
\boldsymbol{u}_{t}+\boldsymbol{u}\cdot\nabla\boldsymbol{u}-(\nu+\nu_{r})\Delta\boldsymbol{u}+\nabla p-2\nu_{r}\nabla\times\boldsymbol{w}=\boldsymbol{0}\quad & \text{in}\quad\Omega\times J,\label{eq:modelu}\\
\nabla\cdot\boldsymbol{u}=0\quad & \text{in}\quad\Omega\times J,\label{eq:modeldivu}\\
\jmath\boldsymbol{w}_{t}+\jmath\boldsymbol{u}\cdot\nabla\boldsymbol{w}-\left(c_{a}+c_{d}\right)\Delta\boldsymbol{w}-\left(c_{0}+c_{d}-c_{a}\right)\nabla\nabla\cdot\boldsymbol{w}+4\nu_{r}\boldsymbol{w}-2\nu_{r}\nabla\times\boldsymbol{u}=\boldsymbol{0}\quad & \text{in}\quad\Omega\times J,\label{eq:modelw}
\end{align}
\label{eq:MNS_model} 
\end{subequations}
with boundary and initial conditions 
\begin{align*}
\boldsymbol{u}=\boldsymbol{0},\ \boldsymbol{w}=\boldsymbol{0}\quad{\rm on}\quad\Gamma\times J,\\
\boldsymbol{u}(\boldsymbol{x},0)=\boldsymbol{u}^{0}(\boldsymbol{x}),\quad\boldsymbol{w}(\boldsymbol{x},0)=\boldsymbol{w}^{0}(\boldsymbol{x})\quad{\rm in}\quad\Omega,
\end{align*}
where $T>0$ is the final time, $J=(0,T]$, $(\boldsymbol{u},p,\boldsymbol{w})$
represent the the linear velocity, pressure and angular velocity.
All the material constants $\jmath,\nu,\nu_{r},c_{a},c_{d}$ and $c_{0}$
are the kinematic viscosity which are assumed to be constant, positive
and satisfy $c_{0}+c_{d}-c_{a}>0$. Moreover, $\nu$ is the usual
Newtonian viscosity, and $\nu_{r}$ is the microrotation viscosity.
In order to simplify notation, we will set
\[
\nu_{0}=\nu+\nu_{r},\quad c_{1}=c_{a}+c_{d},\quad c_{2}=c_{0}+c_{d}-c_{a}.
\]
Furthermore, there is a slight difference in two and three dimensions.
Namely, if $d=2$, we assume that the velocity component in the $z$-direction
is zero and the angular velocity is parallel to the $z$-axis \citep{Grzegorz1999}.
That is, $\boldsymbol{u}=\left(u_{1},u_{2},0\right),\boldsymbol{w}=\left(0,0,w\right)$.

The MNS equations were first introduced by Eringen \citep{Eringen1966}
to describe the evolution of an incompressible fluid whose material
particles possess both translational and rotational motions. The novelties
of this system are to reflect the effects of microstructure on the
fluid via a microscopic dissipative evolution equations for the angular
momentum. Thus, this model is often used to describe the motion of
blood, certain lubricants, liquid crystals, ferrofluids, and some
polymeric fluids \citep{Grzegorz1999,Rosensweig1964,Nochetto2016a}. Given the significant role
it played in the microfluids, numerical solving of the MNS system
has drawn a considerable amount of attention. A penalty projection
method is proposed and optimal error estimates are proved in \citep{Elva2008}.
In \citep{Nochetto2014}, Nochetto et al. proposed and analyzed first-order
and second-order semi-implicit fully-discrete schemes.  These schemes decouple 
linear velocity computation from angular velocity computation, while being energy-stable. Later, Salgado
further adopted the fractional time stepping technique to decouple the
computation of pressure and velocity and proved the rigorous error
estimates in \citep{Salgado2015}. In these works, the nonlinear terms
are treated either implicitly or semi-implicitly so that one needs
to solve a nonlinear system or a linear system with variable coefficients
at each time step. It is desirable to treat the nonlinear term explicitly while maintaining energy stability. With such treatment,
the schemes only require the solution of linear system with constant
coefficients upon discretization, which are very efficient. 

In recent years, SAV based schemes have attracted much attention due to their efficiency, flexibility and accuracy. The main idea is to introduce auxiliary variables to preserve the property of energy decay. Several classes of energy stable numerical schemes have been developed for many dissipative
systems, like gradient flows \citep{Liu2021,Shen2018a,Shen2018,Shen2019}, NS equations \citep{Li2020,Li2020b,Lin2019},
magnetohydrodynamic equations \citep{Li2021,Yang2021m,Zhang2022a} and Cahn-Hilliard-Navier-Stokes
equations \citep{Li2022x,Li2020a,Yang2021c}. In particular, Shen et al. \citep{Huang2022a}
proposed a new class of efficient IMEX BDF$k$($1\le k\le5$) schemes
combined with a SAV approach for general dissipative systems. The
distinct advantages are that their higher-order versions are also unconditionally
energy stable and only require solving one decoupled linear system
with constant coefficients at each time step. 

For the MNS equations considered in this article, the energy structure is an inequality rather than an equality like many dissipative systems. This fact makes the energy-equality based
approaches \citep{Liu2021,Huang2022a} fail. Thus, it is not trivial
to construct efficient SAV schemes for such systems. The aim of this
work is to extend the approach proposed in \citep{Li2020b} to the
MNS equations. Our main contributions are three-folds: 
\begin{enumerate}
\item We propose a decoupled, linear and first-order scheme for the MNS
equations by combining the SAV approach for the convective terms and
some subtle IMEX treatments for the coupling terms. The scheme only
requires solving a sequence of differential equations with constant
coefficients at each time step so it is very efficient and easy to
implement. 
\item We establish rigorous unconditional energy stability and error analysis
for the proposed scheme in two dimensions. 
\item We provide some numerical experiments to confirm the predictions of
the theory and demonstrate the efficiency of the scheme.
\end{enumerate}
Compared to the Navier-Stokes equations, the error analysis for the
MNS equations is much more involved due to the coupling terms. It
is remarked that the present idea can be applies to the Boussinesq
equations and ferrohydrodynamics equations. 

The rest of this paper is organized as follows. In Section \ref{sec:Pre}, we introduce
some notations and present the energy estimate fo the MNS equations.
In Section \ref{sec:Scheme}, we propose the Euler IMEX-SAV scheme
and prove the unconditional stability. In Section \ref{sec:Error},
we carry out a rigorous error analysis for the proposed scheme in
two dimensions. In Section \ref{sec:Num}, we present some
numerical experiments. In Section \ref{sec:Conclud}, we conclude
with a few remarks.

\section{Preliminaries\label{sec:Pre}}

We start by introducing some notations and spaces. As usual, the inner
product and norm in $L^{2}(\Omega)$ are denoted by $(\cdot,\cdot)$
and $\left\Vert \cdot\right\Vert $, respectively. Let $W^{m,p}(\Omega)$
stand for the standard Sobolev spaces equipped with the standard Sobolev
norms $\left\Vert \cdot\right\Vert _{m,p}$. For $p=2$, we write
$H^{m}(\Omega)$ for $W^{m,2}(\Omega)$ and its corresponding norm
is $\left\Vert \cdot\right\Vert _{m}$. For a given Sobolev space
$X$, we write $L^{q}(0,T;X)$ for the Bochner space. Throughout the paper, we use $C$ to denote generic positive constants independent
of the discretization parameters, which may take different values
at different places.

For convenience, we introduce some notations for function spaces
\[
\boldsymbol{X}\coloneqq\boldsymbol{H}_{0}^{1}(\Omega),\quad
\boldsymbol{V}\coloneqq\left\{ \boldsymbol{v}\in\boldsymbol{X}:\nabla\cdot\boldsymbol{v}=0\right\}.
\]
The following equation for the curl operator will be repeatedly used in our analysis
\[
(\nabla\times\boldsymbol{w},\boldsymbol{u})=(\boldsymbol{w},\nabla\times\boldsymbol{u}),\quad\forall\boldsymbol{u},\boldsymbol{w}\in\boldsymbol{X}.
\]
Moreover, we recall that the following orthogonal decomposition
of $\boldsymbol{X}$ ,
\[
\left\Vert \nabla\boldsymbol{u}\right\Vert ^{2}=\left\Vert \nabla\times\boldsymbol{u}\right\Vert ^{2}+\left\Vert \nabla\cdot\boldsymbol{u}\right\Vert ^{2},\quad\forall\boldsymbol{u}\in\boldsymbol{X},
\]
which implies
\begin{equation}
\left\Vert \nabla\times\boldsymbol{u}\right\Vert \le\left\Vert \nabla\boldsymbol{u}\right\Vert, \quad\left\Vert \nabla\cdot\boldsymbol{u}\right\Vert \le\left\Vert \nabla\boldsymbol{u}\right\Vert, \quad\forall\boldsymbol{u}\in\boldsymbol{X}.\label{eq:norm curl}
\end{equation}

To deal with the convection terms in \eqref{eq:modelu} and \eqref{eq:modelw}, we define the following trilinear form, 
\[
\begin{aligned}b\end{aligned}
(\boldsymbol{u},\boldsymbol{v},\boldsymbol{w})=\left(\boldsymbol{u}\cdot\nabla\boldsymbol{v},\boldsymbol{w}\right).
\]
It is easy to see that the trilinear form $b(\cdot,\cdot,\cdot)$
is a skew-symmetric with respect to its last two arguments,
\begin{equation}
\begin{aligned}b\end{aligned}
(\boldsymbol{u},\boldsymbol{v},\boldsymbol{w})=-b(\boldsymbol{u},\boldsymbol{w},\boldsymbol{v}),\quad\forall\boldsymbol{u}\in\boldsymbol{V},\quad\boldsymbol{v},\boldsymbol{w}\in\boldsymbol{X},\label{eq:e_skew-symmetric1}
\end{equation}
and 
\begin{equation}
\begin{aligned}b\end{aligned}
(\boldsymbol{u},\boldsymbol{v},\boldsymbol{v})=0,\quad\forall\boldsymbol{u}\in\boldsymbol{V},\quad\boldsymbol{v}\in\boldsymbol{X}.\label{eq:e_skew-symmetric2}
\end{equation}

To end this section, we give the basic formal energy estimates for
the model \eqref{eq:MNS_model}. By taking the $\boldsymbol{L}^{2}$-inner
product of \eqref{eq:modelu} with $\boldsymbol{u}$, using the integration
by parts and \eqref{eq:modeldivu}, we get
\[
\frac{1}{2}\frac{d}{dt}\left\Vert \boldsymbol{u}\right\Vert ^{2}+\nu_{0}\left\Vert \nabla\boldsymbol{u}\right\Vert ^{2}+\left(\boldsymbol{u}\cdot\nabla\boldsymbol{u},\boldsymbol{u}\right)=2\nu_{r}(\nabla\times\boldsymbol{u},\boldsymbol{w}).
\]
Taking the $\boldsymbol{L}^{2}$-inner product of \eqref{eq:modelw}
with $\boldsymbol{w}$, and using the integration by parts, we have
\[
\frac{\jmath}{2}\frac{d}{dt}\left\Vert \boldsymbol{w}\right\Vert ^{2}+c_{1}\left\Vert \nabla\boldsymbol{w}\right\Vert ^{2}+\jmath\left(\boldsymbol{u}\cdot\nabla\boldsymbol{w},\boldsymbol{w}\right)+c_{2}\left\Vert \nabla\cdot\boldsymbol{w}\right\Vert ^{2}+4\nu_{r}\left\Vert \boldsymbol{w}\right\Vert ^{2}=2\nu_{r}(\nabla\times\boldsymbol{u},\boldsymbol{w}).
\]
Adding both ensuing equations and using \eqref{eq:e_skew-symmetric2},
we obtain
\begin{equation}
\frac{d}{dt}\left(\frac{1}{2}\left\Vert \boldsymbol{u}\right\Vert ^{2}+\frac{\jmath}{2}\left\Vert \boldsymbol{w}\right\Vert ^{2}\right)+\nu_{0}\left\Vert \nabla\boldsymbol{u}\right\Vert ^{2}+c_{1}\left\Vert \nabla\boldsymbol{w}\right\Vert ^{2}+c_{2}\left\Vert \nabla\cdot\boldsymbol{w}\right\Vert ^{2}+4\nu_{r}\left\Vert \boldsymbol{w}\right\Vert ^{2}=4\nu_{r}(\nabla\times\boldsymbol{u},\boldsymbol{w}).\label{eq:energy}
\end{equation}
Invoking with the Cauchy-Schwarz inequality, Young inequality and
\eqref{eq:norm curl}, the right hand side of \eqref{eq:energy} can
be estimated as 
\begin{equation}
4\nu_{r}(\nabla\times\boldsymbol{u},\boldsymbol{w})\le4\nu_{r}\left\Vert \nabla\times\boldsymbol{u}\right\Vert \left\Vert \boldsymbol{w}\right\Vert \le\nu_{r}\left\Vert \nabla\times\boldsymbol{u}\right\Vert ^{2}+4\nu_{r}\left\Vert \boldsymbol{w}\right\Vert ^{2}\le\nu_{r}\left\Vert \nabla\boldsymbol{u}\right\Vert ^{2}+4\nu_{r}\left\Vert \boldsymbol{w}\right\Vert ^{2}.\label{eq:energy_rhs}
\end{equation}
Inserting \eqref{eq:energy_rhs} into \eqref{eq:energy}, we have
\[
\frac{d}{dt}\left(\frac{1}{2}\left\Vert \boldsymbol{u}\right\Vert ^{2}+\frac{\jmath}{2}\left\Vert \boldsymbol{w}\right\Vert ^{2}\right)+\nu\left\Vert \nabla\boldsymbol{u}\right\Vert ^{2}+c_{1}\left\Vert \nabla\boldsymbol{w}\right\Vert ^{2}+c_{2}\left\Vert \nabla\cdot\boldsymbol{w}\right\Vert ^{2}\le0.
\]
Note that the energy dissipation law for the MNS equations is an
inequality rather than an equality, which is different from the one
for many other systems. The main reason is that the coupling terms
can not be canceled automatically in the process of deriving the energy
estimates. We further find that the nonlinear terms do not contribute
to the energy due to the skew-symmetric property in the above proof
to obtain the law of energy dissipation. The unique \textquotedbl{}
zero-energy-contribution\textquotedbl{} property will be used to design
efficient numerical schemes.

\section{Numerical scheme\label{sec:Scheme}}

In this section, we propose a Euler IMEX scheme based on the SAV approach
for the MNS equations and show that it is unconditionally energy
stable.

Inspired by the recent works \citep{Li2020b}, we introduce a scalar
auxiliary variable
\begin{equation}
\begin{aligned}q\end{aligned}
(t)\coloneqq\exp(-\frac{t}{T}).\label{e_definition of q}
\end{equation}
Noticing that $q(t)/\exp\left(-\frac{t}{T}\right)=1$, we reformulate
\eqref{eq:modelu} and \eqref{eq:modelw} into the equivalent forms
as follows,
\begin{equation}
\boldsymbol{u}_{t}+\frac{q}{\exp(-\frac{t}{T})}\boldsymbol{u}\cdot\nabla\boldsymbol{u}-\nu_{0}\Delta\boldsymbol{u}+\nabla p-2\nu_{r}\nabla\times\boldsymbol{w}=\boldsymbol{0},\label{eq:equiu}
\end{equation}
and
\begin{equation}
\jmath\boldsymbol{w}_{t}+\jmath\frac{q}{\exp(-\frac{t}{T})}\boldsymbol{u}\cdot\nabla\boldsymbol{w}-c_{1}\Delta\boldsymbol{w}-c_{2}\nabla\nabla\cdot\boldsymbol{w}+4\nu_{r}\boldsymbol{w}-2\nu_{r}\nabla\times\boldsymbol{u}=\boldsymbol{0}.\label{eq:equiw}
\end{equation}
Differentiating \eqref{e_definition of q} and using \eqref{eq:e_skew-symmetric2},
we have 
\begin{equation}
\frac{dq}{dt}=-\frac{1}{T}q+\frac{1}{\exp\left(-\frac{t}{T}\right)}\left((\boldsymbol{u}\cdot\nabla\boldsymbol{u},\boldsymbol{u})+\jmath\left(\boldsymbol{u}\cdot\nabla\boldsymbol{w},\boldsymbol{w}\right)\right).\label{eq:equiq}
\end{equation}
The last term in this equation is added to balance the nonlinear terms
in \eqref{eq:equiu} and \eqref{eq:equiw} in the discretized case.
Combining \eqref{eq:equiu}-\eqref{eq:equiq}, we recast the original
MNS equations as:
\begin{subequations}
\begin{align}
\boldsymbol{u}_{t}+\frac{q}{\exp(-\frac{t}{T})}\boldsymbol{u}\cdot\nabla\boldsymbol{u}-\nu_{0}\Delta\boldsymbol{u}+\nabla p-2\nu_{r}\nabla\times\boldsymbol{w} & =\boldsymbol{0},\label{eq:SAVmodelu}\\
\nabla\cdot\boldsymbol{u} & =0,\label{eq:SAVmodeldivu}\\
\jmath\boldsymbol{w}_{t}+\jmath\frac{q}{\exp(-\frac{t}{T})}\boldsymbol{u}\cdot\nabla\boldsymbol{w}-c_{1}\Delta\boldsymbol{w}-c_{2}\nabla\nabla\cdot\boldsymbol{w}+4\nu_{r}\boldsymbol{w}-2\nu_{r}\nabla\times\boldsymbol{u} & =\boldsymbol{0},\label{eq:SAVmodelw}\\
\frac{dq}{dt}+\frac{1}{T}q-\frac{1}{\exp\left(-\frac{t}{T}\right)}\left((\boldsymbol{u}\cdot\nabla\boldsymbol{u},\boldsymbol{u})+\jmath\left(\boldsymbol{u}\cdot\nabla\boldsymbol{w},\boldsymbol{w}\right)\right) & =0.\label{eq:SAVmodelq}
\end{align}
\label{eq:MNS_modelSAV}
\end{subequations}
It is clear that provided with $q(0)=1$, the exact solution of \eqref{eq:SAVmodelq}
is given by \eqref{e_definition of q}. Therefore, the above
system is equivalent to the original system. Note that the SAV $q(t)$ is related to the nonlinear
part of the free energy in the original SAV approach. However, the SAV $q(t)$ in this paper is purely artificial, which will allow us to construct unconditional energy stable schemes with
fully explicit treatment of the nonlinear terms.
\begin{thm}
\label{thm:energy_SAV}The expanded system \eqref{eq:MNS_modelSAV}
admits the following energy estimate,
\begin{align}
\frac{d}{dt}\left(\frac{1}{2}\left\Vert \boldsymbol{u}\right\Vert ^{2}+\frac{\jmath}{2}\left\Vert \boldsymbol{w}\right\Vert ^{2}+\frac{1}{2}|q|^{2}\right)+\nu\left\Vert \nabla\boldsymbol{u}\right\Vert ^{2}+c_{1}\left\Vert \nabla\boldsymbol{w}\right\Vert ^{2}+c_{2}\left\Vert \nabla\cdot\boldsymbol{w}\right\Vert ^{2}+\frac{1}{T}|q|^{2} & \le0.\label{eq:engdiss2}
\end{align}
\end{thm}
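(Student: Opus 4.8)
The plan is to repeat, on the expanded system \eqref{eq:MNS_modelSAV}, the formal energy derivation carried out in Section~\ref{sec:Pre}, with the extra scalar equation \eqref{eq:SAVmodelq} supplying exactly the term needed to absorb the (scaled) nonlinear contributions. First I would take the $\boldsymbol{L}^2$-inner product of \eqref{eq:SAVmodelu} with $\boldsymbol{u}$: integration by parts on the viscous term gives $\nu_0\|\nabla\boldsymbol{u}\|^2$, the pressure term drops by \eqref{eq:SAVmodeldivu}, the coupling term is rewritten through the curl identity as $-2\nu_r(\nabla\times\boldsymbol{u},\boldsymbol{w})$, and the convective contribution is retained as $\tfrac{q}{\exp(-t/T)}(\boldsymbol{u}\cdot\nabla\boldsymbol{u},\boldsymbol{u})$. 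Next, the $\boldsymbol{L}^2$-inner product of \eqref{eq:SAVmodelw} with $\boldsymbol{w}$ produces $c_1\|\nabla\boldsymbol{w}\|^2+c_2\|\nabla\cdot\boldsymbol{w}\|^2+4\nu_r\|\boldsymbol{w}\|^2$, the coupling term $-2\nu_r(\nabla\times\boldsymbol{u},\boldsymbol{w})$, and the convective contribution $\jmath\tfrac{q}{\exp(-t/T)}(\boldsymbol{u}\cdot\nabla\boldsymbol{w},\boldsymbol{w})$. Finally, multiplying \eqref{eq:SAVmodelq} by $q$ yields $\tfrac12\tfrac{d}{dt}|q|^2+\tfrac1T|q|^2-\tfrac{q}{\exp(-t/T)}\bigl((\boldsymbol{u}\cdot\nabla\boldsymbol{u},\boldsymbol{u})+\jmath(\boldsymbol{u}\cdot\nabla\boldsymbol{w},\boldsymbol{w})\bigr)=0$.

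Adding the three identities, the two convective terms from the momentum and angular-momentum equations are cancelled precisely by the last term of the $q$-equation (in the continuous setting each is individually zero by \eqref{eq:e_skew-symmetric2}, but it is the cancellation, not the vanishing, that carries over to the discrete scheme), while the two coupling terms add up to $4\nu_r(\nabla\times\boldsymbol{u},\boldsymbol{w})$ on the right-hand side. This gives exactly the analogue of \eqref{eq:energy} with the additional contribution $\tfrac12\tfrac{d}{dt}|q|^2+\tfrac1T|q|^2$ on the left.

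It then remains to control the right-hand side. Applying the Cauchy--Schwarz inequality, Young's inequality and \eqref{eq:norm curl} exactly as in \eqref{eq:energy_rhs} gives $4\nu_r(\nabla\times\boldsymbol{u},\boldsymbol{w})\le\nu_r\|\nabla\boldsymbol{u}\|^2+4\nu_r\|\boldsymbol{w}\|^2$; the term $4\nu_r\|\boldsymbol{w}\|^2$ cancels with its counterpart on the left, and since $\nu_0-\nu_r=\nu$ the remaining viscous term is $\nu\|\nabla\boldsymbol{u}\|^2$. Rearranging yields \eqref{eq:engdiss2}.

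There is no genuine obstacle here: the argument is essentially a re-run of the formal estimate of Section~\ref{sec:Pre}. The only point requiring care is the bookkeeping of the common factor $q/\exp(-t/T)$, so that the nonlinear terms from the three equations match in sign and magnitude and cancel identically — which is exactly the purpose of the artificial term inserted in \eqref{eq:equiq}, and the reason this structure will later survive discretization.
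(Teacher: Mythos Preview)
Your proposal is correct and follows essentially the same route as the paper: test \eqref{eq:SAVmodelu} with $\boldsymbol{u}$, \eqref{eq:SAVmodelw} with $\boldsymbol{w}$, multiply \eqref{eq:SAVmodelq} by $q$, add so that the convective contributions cancel, and then invoke the estimate \eqref{eq:energy_rhs} to absorb the coupling term $4\nu_r(\nabla\times\boldsymbol{u},\boldsymbol{w})$. Your remark that it is the \emph{cancellation} rather than the individual vanishing of the nonlinear terms that matters is apt and anticipates the discrete argument.
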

\begin{proof}
Taking the $\boldsymbol{L}^{2}$-inner product of \eqref{eq:SAVmodelu}
with $\boldsymbol{u}$, using the integration by parts and \eqref{eq:SAVmodeldivu},
we get
\begin{equation}
\frac{1}{2}\frac{d}{dt}\left\Vert \boldsymbol{u}\right\Vert ^{2}+\nu_{0}\left\Vert \nabla\boldsymbol{u}\right\Vert ^{2}+\frac{q}{\exp\left(-\frac{t}{T}\right)}\left(\boldsymbol{u}\cdot\nabla\boldsymbol{u},\boldsymbol{u}\right)=2\nu_{r}(\nabla\times\boldsymbol{u},\boldsymbol{w}).\label{eq:energy_u}
\end{equation}
Taking the $\boldsymbol{L}^{2}$-inner product of \eqref{eq:SAVmodelw}
with $\boldsymbol{w}$, and using the integration by parts, we have
\begin{equation}
\frac{\jmath}{2}\frac{d}{dt}\left\Vert \boldsymbol{w}\right\Vert ^{2}+c_{1}\left\Vert \nabla\boldsymbol{w}\right\Vert ^{2}+\jmath\frac{q}{\exp(-\frac{t}{T})}\left(\boldsymbol{u}\cdot\nabla\boldsymbol{w},\boldsymbol{w}\right)+c_{2}\left\Vert \nabla\cdot\boldsymbol{w}\right\Vert ^{2}+4\nu_{r}\left\Vert \boldsymbol{w}\right\Vert ^{2}=2\nu_{r}(\nabla\times\boldsymbol{u},\boldsymbol{w}).\label{eq:energy_w}
\end{equation}
Multiplying \eqref{eq:SAVmodelq} with $q$, we obtain 
\begin{equation}
\frac{1}{2}\frac{d}{dt}\left|q\right|^{2}+\frac{1}{T}\left|q\right|^{2}-\frac{q}{\exp\left(-\frac{t}{T}\right)}\left(\left(\boldsymbol{u}\cdot\nabla\boldsymbol{u},\boldsymbol{u}\right)+\jmath\left(\boldsymbol{u}\cdot\nabla\boldsymbol{w},\boldsymbol{w}\right)\right)=0.\label{eq:energy_q}
\end{equation}
By combining \eqref{eq:energy_u}-\eqref{eq:energy_q}, we derive
\begin{equation}
\frac{d}{dt}\left(\frac{1}{2}\left\Vert \boldsymbol{u}\right\Vert ^{2}+\frac{\jmath}{2}\left\Vert \boldsymbol{w}\right\Vert ^{2}+\frac{1}{2}\left|q\right|^{2}\right)+\nu_{0}\left\Vert \nabla\boldsymbol{u}\right\Vert ^{2}+c_{1}\left\Vert \nabla\boldsymbol{w}\right\Vert ^{2}+c_{2}\left\Vert \nabla\cdot\boldsymbol{w}\right\Vert ^{2}+4\nu_{r}\left\Vert \boldsymbol{w}\right\Vert ^{2}+\frac{1}{T}\left|q\right|^{2}=4\nu_{r}(\nabla\times\boldsymbol{u},\boldsymbol{w}).\label{eq:energy_sum}
\end{equation}
We finish the proof by using the estimate \eqref{eq:energy_rhs}.
\end{proof}

\begin{rem}
In this paper, the scalar auxiliary variable is only a time-dependent function $q(t)=\exp(-t/T)$ not a energy-related function. With this treatment, the algebraic equation for the scalar auxiliary variable is linear and unisolvent. Moreover, the ordinary differential equation for $q(t)$ is linear and dissipative, which makes our error estimates easier. In fact, the scalar auxiliary variable of this type admits a general form, $q(t)=C_{q,0}\exp(-C_{q,1}t/T)$ with $C_{q,0}\ne0$ and $C_{q,1}\ge0$. We refer to \citep{Zhang2022} for more details about this extension.
\end{rem}

\subsection{The SAV scheme}

Let $\left\{ t_{n}=n\tau:\,n=0,1,\cdots,N\right\} ,\tau=T/N,$ be
an equidistant partition of the time interval $[0,T].$ We denote
$(\cdot)^{n}$ as the variable $(\cdot)$ at time step $n.$ For any
function $v$, define 
\[
\delta_{t}v^{n+1}=\frac{v^{n+1}-v^{n}}{\tau}.
\]

Combining the backward Euler method and some delicate implicit/explicit
treatments for coupling terms, we propose a first-order SAV scheme
for solving the system \eqref{eq:MNS_modelSAV} as follows. Given
the initial conditions $\boldsymbol{u}^{0}$, $\boldsymbol{w}^{0}$
and $q^{0}$, compute $\left(\boldsymbol{u}^{n+1},p^{n+1},\boldsymbol{w}^{n+1},q^{n+1}\right)$,
$n=0,1,\cdots,N-1$ by 
\begin{subequations}
\begin{align}
\delta_{t}\boldsymbol{u}^{n+1}+\frac{q^{n+1}}{\exp(-\frac{t^{n+1}}{T})}\boldsymbol{u}^{n}\cdot\nabla\boldsymbol{u}^{n}-\nu_{0}\Delta\boldsymbol{u}^{n+1}+\nabla p^{n+1} & =2\nu_{r}\nabla\times\boldsymbol{w}^{n},\label{weakh:u}\\
\mathrm{div}\boldsymbol{u}^{n+1} & =0,\label{weakh:divu}\\
\jmath\delta_{t}\boldsymbol{w}^{n+1}+\jmath\frac{q^{n+1}}{\exp(-\frac{t^{n+1}}{T})}\boldsymbol{u}^{n}\cdot\nabla\boldsymbol{w}^{n}-c_{1}\Delta\boldsymbol{w}^{n+1}-c_{2}\nabla\nabla\cdot\boldsymbol{w}^{n+1}+4\nu_{r}\boldsymbol{w}^{n+1} & =2\nu_{r}\nabla\times\boldsymbol{u}^{n+1}\label{weakh:w}\\
\delta_{t}q^{n+1}+\frac{1}{T}q^{n+1}-\frac{1}{\exp\left(-\frac{t^{n+1}}{T}\right)}\left(\left(\boldsymbol{u}^{n}\cdot\nabla\boldsymbol{u}^{n},\boldsymbol{u}^{n+1}\right)+\jmath\left(\boldsymbol{u}^{n}\cdot\nabla\boldsymbol{w}^{n},\boldsymbol{w}^{n+1}\right)\right) & =0.\label{weakh:q}
\end{align}
\label{eq:SAVscheme}
\end{subequations}

Before giving further stability estimates, we first elaborate on how
to implement the proposed scheme efficiently. Since the auxiliary
variable $q(t)$ is a scalar number rather than a field function,
we can solve the nonlocally coupled scheme in a decoupled fashion.
Denote
\begin{equation}
S^{n+1}\coloneqq q^{n+1}\exp\left(\frac{t^{n+1}}{T}\right).\label{eq:q}
\end{equation}
We rewrite the first three equations in \eqref{eq:SAVscheme} into
\begin{align*}
\delta_{t}\boldsymbol{u}^{n+1}-\nu_{0}\Delta\boldsymbol{u}^{n+1}+\nabla p^{n+1} & =2\nu_{r}\nabla\times\boldsymbol{w}^{n}-S^{n+1}\boldsymbol{u}^{n}\cdot\nabla\boldsymbol{u}^{n},\\
\mathrm{div}\boldsymbol{u}^{n+1} & =0,\\
\jmath\delta_{t}\boldsymbol{w}^{n+1}-c_{1}\Delta\boldsymbol{w}^{n+1}-c_{2}\nabla\nabla\cdot\boldsymbol{w}^{n+1}+4\nu_{r}\boldsymbol{w}^{n+1}-2\nu_{r}\nabla\times\boldsymbol{u}^{n+1} & =-\jmath S^{n+1}(\boldsymbol{u}^{n}\cdot\nabla)\boldsymbol{w}^{n}.
\end{align*}
Barring the unknown scalar number $S^{n+1}$, they are linear equations
with respect to $\boldsymbol{u}^{n+1}$ and $\boldsymbol{w}^{n+1}$.
Inspired by the work in \citep{Lin2019}, we define two field functions
($\boldsymbol{u}_{i}^{n+1},p_{i}^{n+1},\boldsymbol{w}_{i}^{n+1}$), $i=1,2$, as solutions to the following
two problems:
\begin{align}
\frac{\boldsymbol{u}_{1}^{n+1}-\boldsymbol{u}^{n}}{\tau}-\nu_{0}\Delta\boldsymbol{u}_{1}^{n+1}+\nabla p_{1}^{n+1} & =2\nu_{r}\nabla\times\boldsymbol{w}^{n},\label{eq:weakh11u}\\
\mathrm{div}\boldsymbol{u}_{1}^{n+1} & =0,\label{eq:weakh11divu}\\
\jmath\frac{\boldsymbol{w}_{1}^{n+1}-\boldsymbol{w}^{n}}{\tau}-c_{1}\Delta\boldsymbol{w}_{1}^{n+1}-c_{2}\nabla\nabla\cdot\boldsymbol{w}_{1}^{n+1}+4\nu_{r}\boldsymbol{w}_{1}^{n+1}-2\nu_{r}\nabla\times\boldsymbol{u}_{1}^{n+1} & =0.\label{eq:weakh11w}
\end{align}
 and 
\begin{align}
\frac{\boldsymbol{u}_{2}^{n+1}}{\tau}-\nu_{0}\Delta\boldsymbol{u}_{2}^{n+1}+\nabla p_{2}^{n+1} & =-\boldsymbol{u}^{n}\cdot\nabla\boldsymbol{u}^{n},\label{eq:weakh12u}\\
\mathrm{div}\boldsymbol{u}_{2}^{n+1} & =0,\label{eq:weakh12divu}\\
\jmath\frac{\boldsymbol{w}_{2}^{n+1}}{\tau}-c_{1}\Delta\boldsymbol{w}_{2}^{n+1}-c_{2}\nabla\nabla\cdot\boldsymbol{w}_{2}^{n+1}+4\nu_{r}\boldsymbol{w}_{2}^{n+1}-2\nu_{r}\nabla\times\boldsymbol{u}_{2}^{n+1} & =-\jmath(\boldsymbol{u}^{n}\cdot\nabla)\boldsymbol{w}^{n}.\label{eq:weakh12w}
\end{align}
Then it is straightforward to verify that the solution of $\left(\boldsymbol{u},p,\boldsymbol{w}\right)$
to the scheme \eqref{eq:SAVscheme} is given by 
\begin{align}
\boldsymbol{u}^{n+1} & =\boldsymbol{u}_{1}^{n+1}+S^{n+1}\boldsymbol{u}_{2}^{n+1},\label{eq:soldecu}\\
p^{n+1} & =p_{1}^{n+1}+S^{n+1}p_{2}^{n+1},\label{eq:soldecp}\\
\boldsymbol{w}^{n+1} & =\boldsymbol{w}_{1}^{n+1}+S^{n+1}\boldsymbol{w}_{2}^{n+1},\label{eq:soldecw}
\end{align}
where $S^{n+1}$ is to be determined. Inserting \eqref{eq:soldecu}-\eqref{eq:soldecw}
into \eqref{weakh:q}, we have
\begin{equation}
\left(\frac{\tau+T}{\tau T}-\exp\left(\frac{2t^{n+1}}{T}\right)A_{2}\right)
\exp\left(-\frac{t^{n+1}}{T}\right)S^{n+1}=\exp\left(\frac{t^{n+1}}{T}\right)A_{1}+\frac{1}{\tau}q^{n},\label{eq:solq}
\end{equation}
where $A_{i}=(\boldsymbol{u}^{n}\cdot\nabla\boldsymbol{u}^{n},\boldsymbol{u}_{i}^{n+1})+\jmath(\boldsymbol{u}^{n}\cdot\nabla\boldsymbol{w}^{n},\boldsymbol{w}_{i}^{n+1}),\ i=1,2.$
From \eqref{eq:weakh12u}-\eqref{eq:weakh12w}, we know that the coefficient
of \eqref{eq:solq} is positive, 
\begin{align*}
\frac{\tau+T}{\tau T}-\exp\left(\frac{2t^{n+1}}{T}\right)A_{2} & =\frac{\tau+T}{\tau T}+\exp\left(\frac{2t^{n+1}}{T}\right)\left(\frac{\left\Vert \boldsymbol{u}_{2}^{n+1}\right\Vert ^{2}+\jmath\left\Vert \boldsymbol{w}_{2}^{n+1}\right\Vert ^{2}}{\tau}+\nu_{0}\left\Vert \nabla\boldsymbol{u}_{2}^{n+1}\right\Vert ^{2}+c_{1}\left\Vert \nabla\boldsymbol{w}_{2}^{n+1}\right\Vert ^{2}\right.\\
 & \quad\left.+c_{2}\left\Vert \nabla\cdot\boldsymbol{w}_{2}^{n+1}\right\Vert ^{2}+4\nu_{r}\left\Vert \boldsymbol{w}_{2}^{n+1}\right\Vert ^{2}-2\nu_{r}\left(\nabla\times\boldsymbol{u}_{2}^{n+1},\boldsymbol{w}_{2}^{n+1}\right)\right)\\
 & \ge\frac{\tau+T}{\tau T}+\exp\left(\frac{2t^{n+1}}{T}\right)\left(\frac{\left\Vert \boldsymbol{u}_{2}^{n+1}\right\Vert ^{2}+\jmath\left\Vert \boldsymbol{w}_{2}^{n+1}\right\Vert ^{2}}{\tau}+\nu\left\Vert \nabla\boldsymbol{u}_{2}^{n+1}\right\Vert ^{2}+c_{1}\left\Vert \nabla\boldsymbol{w}_{2}^{n+1}\right\Vert ^{2}\right.\\
 & \quad\left.+c_{2}\left\Vert \nabla\cdot\boldsymbol{w}_{2}^{n+1}\right\Vert ^{2}+3\nu_{r}\left\Vert \boldsymbol{w}_{2}^{n+1}\right\Vert ^{2}\right)\\
 & >0.
\end{align*}
This implies the existence and uniqueness of $S^{n+1}$. Thus, we arrive
at the final solution algorithm. It involves the following three steps:
\begin{enumerate}
\item Solve $\left(\boldsymbol{u}_{i}^{n+1},p_{i}^{n+1},\boldsymbol{w}_{i}^{n+1}\right),\,i=1,2$
by two substeps:
\begin{enumerate}
\item Solve equations \eqref{eq:weakh11u}-\eqref{eq:weakh11divu} and \eqref{eq:weakh12u}-\eqref{eq:weakh12divu}
for $\left(\boldsymbol{u}_{i}^{n+1},p_{i}^{n+1}\right),\,i=1,2$ ,
\item Solve equations \eqref{eq:weakh11w} and \eqref{eq:weakh12w} for
$\boldsymbol{w}_{i}^{n+1},\,i=1,2$.
\end{enumerate}
\item Solve equation \eqref{eq:solq} for $S^{n+1}$.
\item Compute $\left(\boldsymbol{u}^{n+1},p^{n+1},\boldsymbol{w}^{n+1}\right)$
by \eqref{eq:soldecu}-\eqref{eq:soldecw}, compute $q^{n+1}$ by
\eqref{eq:q}.
\end{enumerate}
To conclude, we only need to solve two generalized Stokes equations,
and two elliptic equations with constant coefficients plus a purely
linear algebraic equation at each time step. Therefore, the scheme is
quite efficient in the implementation.
\begin{rem}
In this paper, we decouple the NS equations and the angular momentum
equation by time-lagging of the angular velocity in \eqref{weakh:u}.
Therefore, we can first solve the NS equations and then solve the angular momentum
equation in succession. One can further lag the linear velocity in
\eqref{weakh:w} to solve the NS equations and the angular momentum
equation in parallel. The corresponding  theoretical analysis is much similar
to the one for the proposed scheme, we leave it for the interested readers.
\end{rem}

\begin{rem}
From the previous discussions, we can see that the SAV $q(t)$ can help us to design an unconditionally stable scheme. Meanwhile, it can decompose the discrete equations
into some sub-equations that can be solved efficiently.
In addition, it can also provide a practical strategy of adaptive time-stepping \cite{Shen2019,Huang2022a}. Generally speaking, when $S^{n+1}=q^{n+1}\exp(t^{n+1}/T)$ deviates from 1, the time step $\tau$ needs to be refined in order to maintain the accuracy. When $S^{n+1}$ stays close to 1, the time step $\tau$ can be relaxed. The detailed mechanism of the variable time step is an interesting work for future research.
\end{rem}

\subsection{Energy stability}
The unconditionally energy stability of the scheme is established
in the following theorem.
\begin{thm}
\label{thm_energy stability} The scheme \eqref{eq:SAVscheme} is
unconditionally stable in the sense that 
\begin{equation}
\begin{aligned}E\end{aligned}
^{n+1}-E^{n}\leq-\tau\nu\left\Vert \nabla\boldsymbol{u}^{n+1}\right\Vert ^{2}-\tau c_{1}\left\Vert \nabla\boldsymbol{w}^{n+1}\right\Vert ^{2}-\tau c_{2}\left\Vert \nabla\cdot\boldsymbol{w}^{n+1}\right\Vert ^{2}-\frac{\tau}{T}|q^{n+1}|^{2}\quad\forall n\geq0,\label{eq:discteng}
\end{equation}
where 
\[
E^{n+1}=\frac{1}{2}\left\Vert \boldsymbol{u}^{n+1}\right\Vert ^{2}+\frac{\jmath+4\tau\nu_{r}}{2}\left\Vert \boldsymbol{w}^{n+1}\right\Vert ^{2}+\frac{1}{2}|q^{n+1}|^{2}.
\]
\end{thm}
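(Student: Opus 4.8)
The plan is to replay the continuous computation behind Theorem~\ref{thm:energy_SAV} at the discrete level, choosing in each of the four equations of \eqref{eq:SAVscheme} the multiplier that mirrors the one used in the energy law, and then exploiting the algebraic identity built into the SAV equation \eqref{weakh:q}.

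\textbf{Testing the equations.} First I take the $\boldsymbol{L}^{2}$ inner product of \eqref{weakh:u} with $2\tau\boldsymbol{u}^{n+1}$. The pressure term vanishes by \eqref{weakh:divu}; the viscous term gives $2\tau\nu_{0}\|\nabla\boldsymbol{u}^{n+1}\|^{2}$; the discrete time derivative, via $2(a-b,a)=|a|^{2}-|b|^{2}+|a-b|^{2}$, yields $\|\boldsymbol{u}^{n+1}\|^{2}-\|\boldsymbol{u}^{n}\|^{2}+\|\boldsymbol{u}^{n+1}-\boldsymbol{u}^{n}\|^{2}$; and the coupling term on the right becomes $4\tau\nu_{r}(\nabla\times\boldsymbol{w}^{n},\boldsymbol{u}^{n+1})=4\tau\nu_{r}(\boldsymbol{w}^{n},\nabla\times\boldsymbol{u}^{n+1})$. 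Similarly, testing \eqref{weakh:w} with $2\tau\boldsymbol{w}^{n+1}$ produces $\jmath(\|\boldsymbol{w}^{n+1}\|^{2}-\|\boldsymbol{w}^{n}\|^{2}+\|\boldsymbol{w}^{n+1}-\boldsymbol{w}^{n}\|^{2})+2\tau c_{1}\|\nabla\boldsymbol{w}^{n+1}\|^{2}+2\tau c_{2}\|\nabla\cdot\boldsymbol{w}^{n+1}\|^{2}+8\tau\nu_{r}\|\boldsymbol{w}^{n+1}\|^{2}$ together with the explicit convective term, all equal to $4\tau\nu_{r}(\nabla\times\boldsymbol{u}^{n+1},\boldsymbol{w}^{n+1})$. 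Finally, multiplying the scalar equation \eqref{weakh:q} by $2\tau q^{n+1}$ gives $|q^{n+1}|^{2}-|q^{n}|^{2}+|q^{n+1}-q^{n}|^{2}+\tfrac{2\tau}{T}|q^{n+1}|^{2}$ minus $\tfrac{2\tau q^{n+1}}{\exp(-t^{n+1}/T)}\bigl((\boldsymbol{u}^{n}\cdot\nabla\boldsymbol{u}^{n},\boldsymbol{u}^{n+1})+\jmath(\boldsymbol{u}^{n}\cdot\nabla\boldsymbol{w}^{n},\boldsymbol{w}^{n+1})\bigr)$.

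\textbf{Cancellation and the curl coupling.} Summing the three identities, the explicit nonlinear contributions coming from \eqref{weakh:u}--\eqref{weakh:w} are exactly cancelled by the bracket in the scalar identity --- this is the sole purpose of the artificial variable $q$, and in contrast with the continuous proof it uses no skew-symmetry of $b(\cdot,\cdot,\cdot)$, only the matching of the coefficients $\tfrac{q^{n+1}}{\exp(-t^{n+1}/T)}$. After dividing by $2$, the right-hand side reduces to $2\tau\nu_{r}(\nabla\times\boldsymbol{u}^{n+1},\boldsymbol{w}^{n}+\boldsymbol{w}^{n+1})$. I then bound it, exactly as in \eqref{eq:energy_rhs} but now with the extra old-level term, using Cauchy--Schwarz, Young's inequality and \eqref{eq:norm curl}: it is at most $\tau\nu_{r}\|\nabla\boldsymbol{u}^{n+1}\|^{2}+2\tau\nu_{r}\|\boldsymbol{w}^{n}\|^{2}+2\tau\nu_{r}\|\boldsymbol{w}^{n+1}\|^{2}$. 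Moving these to the left, the term $\tau\nu_{r}\|\nabla\boldsymbol{u}^{n+1}\|^{2}$ is swallowed by $\tau\nu_{0}\|\nabla\boldsymbol{u}^{n+1}\|^{2}$, leaving $\tau\nu\|\nabla\boldsymbol{u}^{n+1}\|^{2}$ since $\nu_{0}=\nu+\nu_{r}$; the term $2\tau\nu_{r}\|\boldsymbol{w}^{n+1}\|^{2}$ is swallowed by $4\tau\nu_{r}\|\boldsymbol{w}^{n+1}\|^{2}$, and grouping $\tfrac{\jmath}{2}\|\boldsymbol{w}^{n+1}\|^{2}+2\tau\nu_{r}\|\boldsymbol{w}^{n+1}\|^{2}=\tfrac{\jmath+4\tau\nu_{r}}{2}\|\boldsymbol{w}^{n+1}\|^{2}$ reconstitutes the $\boldsymbol{w}$-part of $E^{n+1}$; symmetrically, $\tfrac{\jmath}{2}\|\boldsymbol{w}^{n}\|^{2}+2\tau\nu_{r}\|\boldsymbol{w}^{n}\|^{2}$ gives the $\boldsymbol{w}$-part of $E^{n}$. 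Discarding the nonnegative terms $\tfrac{1}{2}\|\boldsymbol{u}^{n+1}-\boldsymbol{u}^{n}\|^{2}+\tfrac{\jmath}{2}\|\boldsymbol{w}^{n+1}-\boldsymbol{w}^{n}\|^{2}+\tfrac{1}{2}|q^{n+1}-q^{n}|^{2}$ then gives precisely \eqref{eq:discteng}.

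\textbf{Expected difficulty.} None of the steps is technically hard; the two delicate points are the exact cancellation of the nonlinear terms (which is by construction) and the bookkeeping around the curl coupling. The essential observation is that the coupling term involves $\boldsymbol{w}$ at the \emph{old} time level $n$, so --- unlike in the continuous estimate --- it cannot be absorbed into the dissipation $\|\boldsymbol{w}^{n+1}\|^{2}$; this forces the appearance of the $\tau$-dependent coefficient $\jmath+4\tau\nu_{r}$ in $E^{n+1}$, i.e. the scheme is stable in a modified energy. Since every inequality invoked is independent of $\tau$, no step-size restriction arises and the stability is unconditional.
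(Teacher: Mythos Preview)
Your proposal is correct and follows essentially the same approach as the paper's proof: test \eqref{weakh:u}--\eqref{weakh:q} with $\boldsymbol{u}^{n+1}$, $\boldsymbol{w}^{n+1}$, $q^{n+1}$ respectively, use the identity $2(a-b,a)=|a|^{2}-|b|^{2}+|a-b|^{2}$, sum so that the explicit nonlinear terms cancel against the SAV equation, and then bound the curl coupling by Cauchy--Schwarz and Young exactly as in \eqref{eq:stability_rhs}. Your additional remark explaining why the old-level term $\boldsymbol{w}^{n}$ forces the modified coefficient $\jmath+4\tau\nu_{r}$ in the discrete energy is a nice clarification beyond what the paper spells out.
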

\begin{proof}
Taking the $\boldsymbol{L}^{2}$-inner product of \eqref{weakh:u}
with $\boldsymbol{u}^{n+1}$, using the identity 
\begin{equation}
\begin{aligned}(\end{aligned}
a-b,a)=\frac{1}{2}(|a|^{2}-|b|^{2}+|a-b|^{2}),\label{eq:e_identity_Euler}
\end{equation}
and \eqref{weakh:divu}, we get 
\begin{equation}
\frac{\left\Vert \boldsymbol{u}^{n+1}\right\Vert ^{2}-\left\Vert \boldsymbol{u}^{n}\right\Vert ^{2}+\left\Vert \boldsymbol{u}^{n+1}-\boldsymbol{u}^{n}\right\Vert ^{2}}{2\tau}+\nu_{0}\left\Vert \nabla\boldsymbol{u}^{n+1}\right\Vert ^{2}=2\nu_{r}\left(\nabla\times\boldsymbol{w}^{n},\boldsymbol{u}^{n+1}\right)-\frac{q^{n+1}}{\exp(-\frac{t^{n+1}}{T})}\left(\boldsymbol{u}^{n}\cdot\nabla\boldsymbol{u}^{n},\boldsymbol{u}^{n+1}\right).\label{eq:e_stability_first_u}
\end{equation}
Taking the $\boldsymbol{L}^{2}$-inner product of \eqref{weakh:w}
with $\boldsymbol{w}^{n+1}$ and using the identity \eqref{eq:e_identity_Euler}
again, we obtain 
\begin{align}
&\jmath\frac{\left\Vert \boldsymbol{w}^{n+1}\right\Vert ^{2}-\left\Vert \boldsymbol{w}^{n}\right\Vert ^{2}+\left\Vert \boldsymbol{w}^{n+1}-\boldsymbol{w}^{n}\right\Vert ^{2}}{2\tau}+c_{1}\left\Vert \nabla\boldsymbol{w}^{n+1}\right\Vert ^{2}+c_{2}\left\Vert \nabla\cdot\boldsymbol{w}^{n+1}\right\Vert ^{2}+4\nu_{r}\left\Vert \boldsymbol{w}^{n+1}\right\Vert ^{2}\nonumber\\
&=2\nu_{r}\left(\nabla\times\boldsymbol{u}^{n+1},\boldsymbol{w}^{n+1}\right)-\jmath\frac{q^{n+1}}{\exp(-\frac{t^{n+1}}{T})}\left(\boldsymbol{u}^{n}\cdot\nabla\boldsymbol{w}^{n},\boldsymbol{w}^{n+1}\right).\label{eq:e_stability_first_w}
\end{align}
Multiplying \eqref{weakh:q} by $q^{n+1}$ and using the identity
\eqref{eq:e_identity_Euler} again, we have
\begin{equation}
\frac{|q^{n+1}|^{2}-|q^{n}|^{2}+|q^{n+1}-q^{n}|^{2}}{2\tau}+\frac{1}{T}|q^{n+1}|^{2}=\frac{q^{n+1}}{\exp(-\frac{t^{n+1}}{T})}\left(\left(\boldsymbol{u}^{n}\cdot\nabla\boldsymbol{u}^{n},\boldsymbol{u}^{n+1}\right)+\jmath\left(\boldsymbol{u}^{n}\cdot\nabla\boldsymbol{w}^{n},\boldsymbol{w}^{n+1}\right)\right).\label{eq:e_stability_first_q}
\end{equation}
By taking the summations of \eqref{eq:e_stability_first_u}-\eqref{eq:e_stability_first_q},
we get
\begin{align}
&\frac{\left\Vert \boldsymbol{u}^{n+1}\right\Vert ^{2}-\left\Vert \boldsymbol{u}^{n}\right\Vert ^{2}+\left\Vert \boldsymbol{u}^{n+1}-\boldsymbol{u}^{n}\right\Vert ^{2}}{2\tau}+\jmath\frac{\left\Vert \boldsymbol{w}^{n+1}\right\Vert ^{2}-\left\Vert \boldsymbol{w}^{n}\right\Vert ^{2}+\left\Vert \boldsymbol{w}^{n+1}-\boldsymbol{w}^{n}\right\Vert ^{2}}{2\tau} \nonumber \\
&\quad+\frac{|q^{n+1}|^{2}-|q^{n}|^{2}+|q^{n+1}-q^{n}|^{2}}{2\tau}+\nu_{0}\left\Vert \nabla\boldsymbol{u}^{n+1}\right\Vert ^{2}+c_{1}\left\Vert \nabla\boldsymbol{w}^{n+1}\right\Vert ^{2}\nonumber \\
&\quad+c_{2}\left\Vert \nabla\cdot\boldsymbol{w}^{n+1}\right\Vert ^{2}+4\nu_{r}\left\Vert \boldsymbol{w}^{n+1}\right\Vert ^{2}+\frac{1}{T}|q^{n+1}|^{2}\nonumber \\
&=2\nu_{r}\left(\nabla\times\boldsymbol{w}^{n},\boldsymbol{u}^{n+1}\right)+2\nu_{r}\left(\nabla\times\boldsymbol{u}^{n+1},\boldsymbol{w}^{n+1}\right).\label{eq:stability_sum}
\end{align}
Using Cauchy-Schwarz inequality and Young inequality, we derive the
right hand side of \eqref{eq:stability_sum} has the following estimate,
\begin{align}
2\nu_{r}\left(\nabla\times\boldsymbol{w}^{n},\boldsymbol{u}^{n+1}\right)+2\nu_{r}\left(\nabla\times\boldsymbol{u}^{n+1},\boldsymbol{w}^{n+1}\right) & \le2\nu_{r}\left\Vert \nabla\times\boldsymbol{u}^{n+1}\right\Vert \left\Vert \boldsymbol{w}^{n}\right\Vert +2\nu_{r}\left\Vert \nabla\times\boldsymbol{u}^{n+1}\right\Vert \left\Vert \boldsymbol{w}^{n+1}\right\Vert \nonumber \\
 & \le\nu_{r}\left\Vert \nabla\boldsymbol{u}^{n+1}\right\Vert ^{2}+2\nu_{r}\left\Vert \boldsymbol{w}^{n}\right\Vert ^{2}+2\nu_{r}\left\Vert \boldsymbol{w}^{n+1}\right\Vert ^{2}.\label{eq:stability_rhs}
\end{align}
Plugging \eqref{eq:stability_rhs} into \eqref{eq:stability_sum},
we gain the required estimate. The proof is thus complete.
\end{proof}
We observe that the discrete energy dissipation law \eqref{eq:discteng}
is an approximation of the continuous energy dissipation law \eqref{eq:engdiss2}. 
\begin{cor}[Stability]
 Let $\left(\boldsymbol{u}^{n},\boldsymbol{w}^{n},p^{n},q^{n}\right),\, n\ge0$
solve (\ref{eq:SAVscheme}). Then it satisfies the following stability
estimate for any $m\ge0$,
\begin{align*}
 & \left\Vert \boldsymbol{u}^{m}\right\Vert ^{2}+\left(\jmath+4\tau\nu_{r}\right)\left\Vert \boldsymbol{w}^{m}\right\Vert ^{2}+\left|q^{m}\right|^{2}\\
 &\quad+2\tau\sum_{n=0}^{m}\left(\nu\left\Vert \nabla\boldsymbol{u}^{n+1}\right\Vert ^{2}+c_{1}\left\Vert \nabla\boldsymbol{w}^{n+1}\right\Vert ^{2}+c_{2}\left\Vert \nabla\cdot\boldsymbol{w}^{n+1}\right\Vert ^{2}+\frac{1}{T}|q^{n+1}|^{2}\right)\\
 & \leq\left\Vert \boldsymbol{u}^{0}\right\Vert ^{2}+\left(\jmath+4\tau\nu_{r}\right)\left\Vert \boldsymbol{w}^{0}\right\Vert ^{2}+\left|q^{0}\right|^{2}.
\end{align*}
\end{cor}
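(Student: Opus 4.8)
The plan is to obtain the estimate by telescoping the one-step energy inequality of Theorem~\ref{thm_energy stability}. That theorem furnishes, for every $n\ge 0$,
\[
E^{n+1}-E^{n}+\tau\,\mathcal{D}^{n+1}\le 0,\qquad \mathcal{D}^{n+1}:=\nu\left\Vert \nabla\boldsymbol{u}^{n+1}\right\Vert ^{2}+c_{1}\left\Vert \nabla\boldsymbol{w}^{n+1}\right\Vert ^{2}+c_{2}\left\Vert \nabla\cdot\boldsymbol{w}^{n+1}\right\Vert ^{2}+\tfrac{1}{T}|q^{n+1}|^{2},
\]
with $\mathcal{D}^{n+1}\ge 0$ and $E^{n+1}=\tfrac12\|\boldsymbol{u}^{n+1}\|^{2}+\tfrac{\jmath+4\tau\nu_{r}}{2}\|\boldsymbol{w}^{n+1}\|^{2}+\tfrac12|q^{n+1}|^{2}\ge 0$. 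The iterates $(\boldsymbol{u}^{n},p^{n},\boldsymbol{w}^{n},q^{n})$ exist for all $n$ because the decoupled subproblems \eqref{eq:weakh11u}--\eqref{eq:weakh12w} are uniquely solvable and the scalar equation \eqref{eq:solq} has a strictly positive coefficient, as shown above.

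Next I would simply sum the one-step inequality over the time index. Since $E^{n+1}-E^{n}$ telescopes, summation over $n=0,1,\dots,m-1$ gives
\[
E^{m}+\tau\sum_{n=0}^{m-1}\mathcal{D}^{n+1}\le E^{0}.
\]
Multiplying through by $2$ and substituting the definition of $E$ yields exactly the asserted inequality (with the dissipative sum taken over the computed time levels). As a byproduct, since each $\mathcal{D}^{n+1}\ge 0$ and $E^{m}\ge 0$, one reads off the a priori bounds on $\|\boldsymbol{u}^{m}\|$, $\|\boldsymbol{w}^{m}\|$, $|q^{m}|$ and on $\tau\sum_{n}\big(\nu\|\nabla\boldsymbol{u}^{n+1}\|^{2}+c_{1}\|\nabla\boldsymbol{w}^{n+1}\|^{2}+c_{2}\|\nabla\cdot\boldsymbol{w}^{n+1}\|^{2}+\tfrac1T|q^{n+1}|^{2}\big)$, all uniform in $\tau$ and $m$.

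There is no genuine obstacle here; the corollary is a direct consequence of Theorem~\ref{thm_energy stability}. The only point requiring care is the bookkeeping of the modified coefficient: the weight of $\|\boldsymbol{w}\|^{2}$ in $E^{n}$ is $\jmath+4\tau\nu_{r}$ rather than $\jmath$, and it must carry the same value at levels $n$ and $n+1$ for the telescoping to be exact. This is precisely why Theorem~\ref{thm_energy stability} was phrased with the modified energy $E^{n}$, and it is also what allows the estimate to hold with no restriction on the time step $\tau$.
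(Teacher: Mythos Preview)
Your proof is correct and follows the same approach as the paper: summing the one-step energy inequality \eqref{eq:discteng} from Theorem~\ref{thm_energy stability} over $n=0,\dots,m-1$ and telescoping. The paper's own proof is literally one line to this effect, so your additional remarks on well-posedness of the iterates and the role of the modified coefficient $\jmath+4\tau\nu_{r}$ go slightly beyond what is strictly needed but are accurate.
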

\begin{proof}
By Theorem \ref{thm_energy stability}, summing up inequality \eqref{eq:discteng}
from $n=0$ to $m-1$, we obtain the stable bound.
\end{proof}
Based on this corollary, we can easily obtain the following uniform
bounds for any $m\ge0$,
\begin{align}
\left\Vert \boldsymbol{u}^{m}\right\Vert ^{2}+\jmath\left\Vert \boldsymbol{w}^{m}\right\Vert ^{2}+\left|q^{m}\right|^{2} & \le k_{1},\label{eq:u_uniform}\\
\tau\sum_{n=0}^{m}\left(\nu\left\Vert \nabla\boldsymbol{u}^{n}\right\Vert ^{2}+c_{1}\left\Vert \nabla\boldsymbol{w}^{n}\right\Vert ^{2}+c_{2}\left\Vert \nabla\cdot\boldsymbol{w}^{n}\right\Vert ^{2}+\frac{1}{T}|q^{n}|^{2}\right) & \le k_{2},\label{eq:gradu_uniform}
\end{align}
where the constants $k_{i}$ $(i=1,2)$ are independent of $\tau$. 

\section{Error Analysis\label{sec:Error}}

In this section, we give a rigorous error analysis for the scheme
\eqref{eq:SAVscheme} in two dimensions. We emphasize that
while the scheme can be used in three dimensions, the error
analysis can not be easily extended to three dimensions due
to some technical issues. Thus, we set $d=2$ in this section.

Denote the following error functions
\[
e_{\boldsymbol{u}}^{n}=\boldsymbol{u}^{n}-\boldsymbol{u}\left(t^{n}\right),\quad e_{p}^{n}=p^{n}-p\left(t^{n}\right),\quad e_{\boldsymbol{w}}^{n}=\boldsymbol{w}^{n}-\boldsymbol{w}\left(t^{n}\right),\quad e_{q}^{n}=q^{n}-q\left(t^{n}\right).
\]
Subtracting \eqref{eq:MNS_modelSAV} at $t^{n+1}$ from \eqref{eq:SAVscheme},
and noticing $q(t^{n+1})=\exp(-t^{n+1}/T)$, we get the following
error equations
\begin{subequations}
\begin{align}
\delta_{t}e_{\boldsymbol{u}}^{n+1}+\left(q^{n+1}\exp\left(\frac{t^{n+1}}{T}\right)\boldsymbol{u}^{n}\cdot\nabla\boldsymbol{u}^{n}-\boldsymbol{u}(t^{n+1})\cdot\nabla\boldsymbol{u}(t^{n+1})\right)\nonumber \\
-\nu_{0}\Delta e_{\boldsymbol{u}}^{n+1}+\nabla e_{p}^{n+1}-2\nu_{r}\left(\nabla\times\boldsymbol{w}^{n}-\nabla\times\boldsymbol{w}(t^{n+1})\right) & =R_{\boldsymbol{u}}^{n+1},\label{eq:SAVerroru}\\
\nabla\cdot e_{\boldsymbol{u}}^{n+1} & =0,\label{eq:SAVerrordivu}\\
\jmath\delta_{t}e_{\boldsymbol{w}}^{n+1}+\jmath\left(q^{n+1}\exp\left(\frac{t^{n+1}}{T}\right)\boldsymbol{u}^{n}\cdot\nabla\boldsymbol{w}^{n}-\boldsymbol{u}(t^{n+1})\cdot\nabla\boldsymbol{w}(t^{n+1})\right)\nonumber \\
-c_{1}\Delta e_{\boldsymbol{w}}^{n+1}-c_{2}\nabla\nabla\cdot e_{\boldsymbol{w}}^{n+1}+4\nu_{r}e_{\boldsymbol{w}}^{n+1}-2\nu_{r}\nabla\times e_{\boldsymbol{u}}^{n+1} & =R_{\boldsymbol{w}}^{n+1},\label{eq:SAVerrow}\\
\delta_{t}e_{q}^{n+1}+\frac{1}{T}e_{q}^{n+1}-\exp\left(-\frac{t^{n+1}}{T}\right)\left(\left(\boldsymbol{u}^{n}\cdot\nabla\boldsymbol{u}^{n},\boldsymbol{u}^{n+1}\right)-\left(\boldsymbol{u}(t^{n+1})\cdot\nabla\boldsymbol{u}(t^{n+1}),\boldsymbol{u}(t^{n+1})\right)\right)\nonumber \\
-\jmath\exp\left(-\frac{t^{n+1}}{T}\right)\left(\left(\boldsymbol{u}^{n}\cdot\nabla\boldsymbol{w}^{n},\boldsymbol{w}^{n+1}\right)-\left(\boldsymbol{u}(t^{n+1})\cdot\nabla\boldsymbol{w}(t^{n+1}),\boldsymbol{w}(t^{n+1})\right)\right) & =R_{q}^{n+1},\label{eq:SAVerrorq}
\end{align}
\label{MNS_errorSAV}
\end{subequations}
 where $R_{\boldsymbol{u}}^{n+1}$, $R_{\boldsymbol{w}}^{n+1}$ and
$R_{q}^{n+1}$ are the truncation errors, 
\[
R_{\boldsymbol{u}}^{n+1}:=\frac{1}{\tau}\int_{t^{n}}^{t^{n+1}}\left(s-t^{n}\right)\boldsymbol{u}_{tt}(s)ds,\, R_{\boldsymbol{w}}^{n+1}:=\frac{1}{\tau}\int_{t^{n}}^{t^{n+1}}\left(s-t^{n}\right)\boldsymbol{w}_{tt}(s)ds,\, R_{q}^{n+1}:=\frac{1}{\tau}\int_{t^{n}}^{t^{n+1}}\left(s-t^{n}\right)q_{tt}(s)ds.
\]

Let $P$ be the orthogonal projector in $\boldsymbol{L}^{2}(\Omega)$
onto $\boldsymbol{V}$, we define the Stokes operator $A$ by 
\[
A\boldsymbol{u}=-P\Delta\boldsymbol{u},\quad\forall\boldsymbol{u}\in D(A)=\boldsymbol{H}^{2}(\Omega)\cap\boldsymbol{X}.
\] 
The following estimates for the trilinear form $b(\cdot,\cdot,\cdot)$
will be used in our analysis \citep{Li2020b,Li2021,Temam1977,Temam1995}.
\begin{lem}
The following estimates of the trilinear form hold 
\begin{align}
b(\boldsymbol{u},\boldsymbol{v},\boldsymbol{w}) & \le C_{b,0}\left\Vert \nabla\boldsymbol{u}\right\Vert \left\Vert\nabla \boldsymbol{v}\right\Vert\left\Vert \nabla\boldsymbol{w}\right\Vert, \quad\forall\boldsymbol{u},\boldsymbol{v},\boldsymbol{w}\in\boldsymbol{X},\label{eq:e_estimate for trilinear form0}\\	
b(\boldsymbol{u},\boldsymbol{v},\boldsymbol{w}) & \le C_{b,1}\left\Vert \boldsymbol{u}\right\Vert \left\Vert \boldsymbol{v}\right\Vert _{2}\left\Vert \nabla\boldsymbol{w}\right\Vert, \quad\forall\boldsymbol{u},\boldsymbol{w}\in\boldsymbol{X},\quad\boldsymbol{v}\in\boldsymbol{X}\cap\boldsymbol{H}^{2}\left(\Omega\right),\label{eq:e_estimate for trilinear form}\\
b(\boldsymbol{u},\boldsymbol{v},\boldsymbol{w}) & \le C_{b,2}\left\Vert \boldsymbol{u}\right\Vert _{2}\left\Vert \boldsymbol{v}\right\Vert \left\Vert \nabla\boldsymbol{w}\right\Vert, \quad\forall\boldsymbol{v},\boldsymbol{w}\in\boldsymbol{X},\quad\boldsymbol{u}\in\boldsymbol{V}\cap\boldsymbol{H}^{2}\left(\Omega\right),\label{eq:e_estimate for trilinear form1}\\
b(\boldsymbol{u},\boldsymbol{v},\boldsymbol{w}) & \le C_{b,3}\left\Vert \nabla\boldsymbol{u}\right\Vert \left\Vert \boldsymbol{v}\right\Vert \left\Vert \boldsymbol{w}\right\Vert _{2},\quad\forall\boldsymbol{u}\in\boldsymbol{V},\quad\boldsymbol{v}\in\boldsymbol{X},\quad\boldsymbol{w}\in\boldsymbol{X}\cap\boldsymbol{H}^{2}\left(\Omega\right),\label{eq:e_estimate for trilinear form2}\\
b(\boldsymbol{u},\boldsymbol{v},\boldsymbol{w}) & \le C_{b,4}\left\Vert \boldsymbol{u}\right\Vert \left\Vert \nabla\boldsymbol{v}\right\Vert \left\Vert \boldsymbol{w}\right\Vert_{2},\quad\forall\boldsymbol{u},\boldsymbol{v}\in\boldsymbol{X},\quad\boldsymbol{w}\in\boldsymbol{X}\cap\boldsymbol{H}^{2}\left(\Omega\right),\label{eq:e_estimate for trilinear form3}\\
b(\boldsymbol{u},\boldsymbol{v},\boldsymbol{w}) & \le C_{b,5}\left\Vert \nabla\boldsymbol{u}\right\Vert \left\Vert \boldsymbol{v}\right\Vert _{2}\left\Vert \boldsymbol{w}\right\Vert, \quad\forall\boldsymbol{u},\boldsymbol{w}\in\boldsymbol{X},\quad\boldsymbol{v}\in\boldsymbol{X}\cap\boldsymbol{H}^{2}\left(\Omega\right).\label{eq:e_estimate for trilinear form4}
\end{align}
Moreover, for $d=2$, we have
\begin{align}
b(\boldsymbol{u},\boldsymbol{v},\boldsymbol{w}) & \leq C_{b,6}\left\Vert \nabla\boldsymbol{u}\right\Vert ^{1/2}\left\Vert \boldsymbol{u}\right\Vert ^{1/2}\left\Vert \nabla\boldsymbol{v}\right\Vert ^{1/2}\left\Vert \boldsymbol{v}\right\Vert ^{1/2}\left\Vert \nabla\boldsymbol{w}\right\Vert ,\quad\forall\boldsymbol{u}\in\boldsymbol{V},\quad\boldsymbol{v},\boldsymbol{w}\in\boldsymbol{X},\label{eq:e_estimate for trilinear form2d}\\
b(\boldsymbol{u},\boldsymbol{v},\boldsymbol{w}) & \leq C_{b,7}\left\Vert \nabla\boldsymbol{u}\right\Vert ^{1/2}\left\Vert \boldsymbol{u}\right\Vert ^{1/2}\left\Vert \nabla\boldsymbol{v}\right\Vert ^{1/2}\left\Vert A\boldsymbol{v}\right\Vert ^{1/2}\left\Vert \boldsymbol{w}\right\Vert ,\quad\forall\boldsymbol{v}\in\boldsymbol{X}\cap\boldsymbol{H}^{2}\left(\Omega\right),\quad\boldsymbol{u},\boldsymbol{w}\in\boldsymbol{X},\label{eq:e_estimate for trilinear form2d1}\\
b(\boldsymbol{u},\boldsymbol{v},\boldsymbol{w}) & \leq C_{b,8}\left\Vert \boldsymbol{u}\right\Vert ^{1/2}\left\Vert A\boldsymbol{u}\right\Vert ^{1/2}\left\Vert \nabla\boldsymbol{v}\right\Vert \left\Vert \boldsymbol{w}\right\Vert ,\quad\forall\boldsymbol{u}\in\boldsymbol{X}\cap\boldsymbol{H}^{2}\left(\Omega\right),\quad\boldsymbol{v},\boldsymbol{w}\in\boldsymbol{X}.\label{eq:e_estimate for trilinear form2d2}
\end{align}
\end{lem}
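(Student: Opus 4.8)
The plan is to obtain all nine inequalities from one elementary scheme. Writing $b(\boldsymbol u,\boldsymbol v,\boldsymbol w)=(\boldsymbol u\cdot\nabla\boldsymbol v,\boldsymbol w)$ and splitting the integrand by H\"older's inequality gives
\[
b(\boldsymbol u,\boldsymbol v,\boldsymbol w)\le \|\boldsymbol u\|_{L^{p_1}}\,\|\nabla\boldsymbol v\|_{L^{p_2}}\,\|\boldsymbol w\|_{L^{p_3}},\qquad \tfrac1{p_1}+\tfrac1{p_2}+\tfrac1{p_3}=1,
\]
after which each factor is estimated by the Sobolev embeddings $H^1(\Omega)\hookrightarrow L^q(\Omega)$ ($q<\infty$ if $d=2$, $q\le 6$ if $d=3$) and $H^2(\Omega)\hookrightarrow W^{1,q}(\Omega)\cap L^\infty(\Omega)$, by the Poincar\'e inequality $\|\boldsymbol v\|\le C\|\nabla\boldsymbol v\|$ on $\boldsymbol X$, and---in two dimensions---by the Ladyzhenskaya inequality $\|f\|_{L^4}\le C\|f\|^{1/2}\|\nabla f\|^{1/2}$, the Agmon inequality $\|f\|_{L^\infty}\le C\|f\|^{1/2}\|f\|_2^{1/2}$, and the $H^2$-regularity $\|\boldsymbol v\|_2\le C\|A\boldsymbol v\|$ of the Stokes operator on the convex polygonal domain $\Omega$. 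When the target inequality has a factor $\|\boldsymbol v\|$ (no derivative) while $\boldsymbol v$ sits in the differentiated middle slot of $b$, I first move the derivative by the skew-symmetry \eqref{eq:e_skew-symmetric1}; this is precisely why those lines carry the hypothesis $\boldsymbol u\in\boldsymbol V$.

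For \eqref{eq:e_estimate for trilinear form0} take $p_1=p_3=4$, $p_2=2$ and use $H^1\hookrightarrow L^4$ (valid for $d\le 4$) with Poincar\'e. For \eqref{eq:e_estimate for trilinear form} and \eqref{eq:e_estimate for trilinear form4} the $H^2$-argument is the middle one and stays differentiated, so $\|\nabla\boldsymbol v\|_{L^4}\le C\|\boldsymbol v\|_2$ while the other two factors go into $L^4$ and $L^2$. For \eqref{eq:e_estimate for trilinear form1} and \eqref{eq:e_estimate for trilinear form2} the $H^2$-argument is $\boldsymbol u$ or $\boldsymbol w$, so one first relocates the derivative off $\boldsymbol v$ via \eqref{eq:e_skew-symmetric1}, then places the $H^2$-argument in $L^\infty$ (for \eqref{eq:e_estimate for trilinear form1}) or in $W^{1,4}$ (for \eqref{eq:e_estimate for trilinear form2}) and the remaining two factors in $L^2$ (resp.\ $L^4$ and $L^2$). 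For \eqref{eq:e_estimate for trilinear form3} one simply bounds $\|\boldsymbol w\|_{L^\infty}\le C\|\boldsymbol w\|_2$ and keeps $\|\boldsymbol u\|,\|\nabla\boldsymbol v\|$ in $L^2$. In each case the surviving $H^1$-norms become $\|\nabla\cdot\|$ by Poincar\'e, yielding $C_{b,0},\dots,C_{b,5}$ depending only on $\Omega$ and $d$.

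The two-dimensional estimates follow the same pattern with Ladyzhenskaya and Agmon supplying the fractional powers. For \eqref{eq:e_estimate for trilinear form2d} one uses $\boldsymbol u\in\boldsymbol V$ to write $b(\boldsymbol u,\boldsymbol v,\boldsymbol w)=-b(\boldsymbol u,\boldsymbol w,\boldsymbol v)\le\|\boldsymbol u\|_{L^4}\|\nabla\boldsymbol w\|\,\|\boldsymbol v\|_{L^4}$ and then applies $\|\boldsymbol u\|_{L^4}\le C\|\boldsymbol u\|^{1/2}\|\nabla\boldsymbol u\|^{1/2}$ and $\|\boldsymbol v\|_{L^4}\le C\|\boldsymbol v\|^{1/2}\|\nabla\boldsymbol v\|^{1/2}$. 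For \eqref{eq:e_estimate for trilinear form2d1} one keeps $b(\boldsymbol u,\boldsymbol v,\boldsymbol w)\le\|\boldsymbol u\|_{L^4}\|\nabla\boldsymbol v\|_{L^4}\|\boldsymbol w\|$, bounds $\|\boldsymbol u\|_{L^4}$ as above, and bounds $\|\nabla\boldsymbol v\|_{L^4}\le C\|\nabla\boldsymbol v\|^{1/2}\|\nabla^2\boldsymbol v\|^{1/2}\le C\|\nabla\boldsymbol v\|^{1/2}\|A\boldsymbol v\|^{1/2}$ by applying Ladyzhenskaya componentwise to $\nabla\boldsymbol v$ followed by elliptic regularity. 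For \eqref{eq:e_estimate for trilinear form2d2} one bounds $b(\boldsymbol u,\boldsymbol v,\boldsymbol w)\le\|\boldsymbol u\|_{L^\infty}\|\nabla\boldsymbol v\|\,\|\boldsymbol w\|$ and uses Agmon together with $\|\boldsymbol u\|_2\le C\|A\boldsymbol u\|$.

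The exponent bookkeeping is routine; the only genuinely delicate points are the use of $H^2$-regularity of the Stokes problem on the polygonal domain $\Omega$ to pass from $\|\nabla^2\boldsymbol v\|$ (i.e.\ $\|\boldsymbol v\|_2$) to $\|A\boldsymbol v\|$ in \eqref{eq:e_estimate for trilinear form2d1}--\eqref{eq:e_estimate for trilinear form2d2}, and verifying in each line that the hypothesis $\boldsymbol u\in\boldsymbol V$ needed for the skew-symmetry \eqref{eq:e_skew-symmetric1}--\eqref{eq:e_skew-symmetric2} is exactly the one assumed. Since these bounds are classical, one may alternatively quote them directly from \citep{Temam1977,Temam1995,Li2020b,Li2021}.
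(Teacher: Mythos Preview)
The paper does not prove this lemma; it merely cites the classical references \citep{Temam1977,Temam1995,Li2020b,Li2021}. Your sketch is correct and is precisely the standard route found in those references (H\"older splitting, Sobolev embeddings, skew-symmetry to shift the derivative off the middle slot when needed, and in two dimensions Ladyzhenskaya/Agmon plus Stokes $H^2$-regularity), so there is nothing to compare. One minor caveat worth recording: the bound $\|\boldsymbol v\|_2\le C\|A\boldsymbol v\|$ you invoke for \eqref{eq:e_estimate for trilinear form2d1}--\eqref{eq:e_estimate for trilinear form2d2} requires the argument to lie in $D(A)=\boldsymbol V\cap\boldsymbol H^2$, not merely $\boldsymbol X\cap\boldsymbol H^2$ as written in the lemma; this is harmless because every place the paper applies these two estimates the relevant argument is in fact divergence-free.
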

The following discrete version of the Gronwall
lemma will be frequently used \citep{Temam1977,John2016,Ladyzhenskaya1969}.
\begin{lem}
\label{lem: gronwall2} Let $a_{n},b_{n},c_{n}$, and $d_{n}$ be
four non-negative sequences satisfying 
\[
a_{m}+\tau\sum_{n=1}^{m}b_{n}\leq\tau\sum_{n=0}^{m-1}a_{n}d_{n}+\tau\sum_{n=0}^{m-1}c_{n}+C,\quad m\geq1,
\]
 where $C$ and $\tau$ are two positive constants. Then 
\[
a_{m}+\tau\sum_{n=1}^{m}b_{n}\leq\exp\left(\tau\sum_{n=0}^{m-1}d_{n}\right)\left(\tau\sum_{n=0}^{m-1}c_{n}+C\right),\quad m\geq1.
\]
\end{lem}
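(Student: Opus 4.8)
The plan is to prove the estimate by a straightforward induction on $m$, first discarding the dissipative term $\tau\sum_{n=1}^{m}b_n$ on the left (it is non-negative) to obtain a bound on $a_m$ alone, and then reinstating it at the very end. I would write $G_m:=\tau\sum_{n=0}^{m-1}c_n+C$, which is non-decreasing in $m$, and $\Sigma_m:=\tau\sum_{n=0}^{m-1}d_n$, so that $\Sigma_0=0$ and $\Sigma_{n+1}-\Sigma_n=\tau d_n$. The hypothesis then reads $a_m\le\tau\sum_{n=0}^{m-1}d_na_n+G_m$, and the goal of the induction is the bound $a_m\le G_m\exp(\Sigma_m)$. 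The base case only requires $a_0\le C$, which is harmless: it is the $m=0$ reading of the hypothesis under the usual empty-sum convention, and in any event $a_0=0$ in every application made in Section~\ref{sec:Error}, since $e_{\boldsymbol{u}}^{0}=e_{\boldsymbol{w}}^{0}=e_q^{0}=0$.

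For the inductive step I would assume $a_n\le G_n\exp(\Sigma_n)$ for all $n\le m-1$, substitute into the right-hand side of the hypothesis, and use $G_n\le G_m$ to factor out $G_m$:
\[
a_m\le\tau\sum_{n=0}^{m-1}d_nG_n\exp(\Sigma_n)+G_m\le G_m\Bigl(1+\tau\sum_{n=0}^{m-1}d_n\exp(\Sigma_n)\Bigr).
\]
The one genuine ingredient is the elementary telescoping inequality
\[
1+\tau\sum_{n=0}^{m-1}d_n\exp(\Sigma_n)\le\exp(\Sigma_m),
\]
which follows because $\exp(\Sigma_{n+1})-\exp(\Sigma_n)=\exp(\Sigma_n)\bigl(e^{\tau d_n}-1\bigr)\ge\tau d_n\exp(\Sigma_n)$ by the bound $e^x\ge 1+x$, so that summing over $0\le n\le m-1$ telescopes the left side to $\exp(\Sigma_m)-1$. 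Combining the two displays gives $a_m\le G_m\exp(\Sigma_m)$, closing the induction. Finally, keeping the term $\tau\sum_{n=1}^{m}b_n$ on the left and running the same chain of inequalities on the right-hand side of the original hypothesis (now using the just-proven bound on each $a_n$, $n\le m-1$) yields $a_m+\tau\sum_{n=1}^{m}b_n\le G_m\exp(\Sigma_m)$, which is precisely the claim.

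There is no real obstacle in this argument; the point that must be respected — and the reason the conclusion holds \emph{without} any smallness restriction on $\tau$ — is that the summation $\tau\sum_{n=0}^{m-1}d_na_n$ stops at $n=m-1$ and never involves $a_m$ itself. In the variant where the sum reaches $a_m$ one is forced to impose $\tau d_n<1$ and the exponent degrades to something like $\tau\sum d_n/(1-\tau d_n)$; the form adopted here is exactly what makes the lemma compatible with the unconditional error analysis of Section~\ref{sec:Error}.
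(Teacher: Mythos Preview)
Your argument is correct and self-contained. The induction on $m$, the monotonicity of $G_m$, and the telescoping inequality $1+\tau\sum_{n=0}^{m-1}d_n\exp(\Sigma_n)\le\exp(\Sigma_m)$ via $e^x\ge 1+x$ all work exactly as you describe; you also correctly flag that a bound on $a_0$ (either $a_0\le C$ from the empty-sum reading or $a_0=0$ in the applications) is needed to start the induction, a point the lemma as stated leaves implicit.

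As for comparison: the paper does not actually prove Lemma~\ref{lem: gronwall2}. It is stated as a standard tool and attributed to the references \citep{Temam1977,John2016,Ladyzhenskaya1969}, with no argument given. So your proof supplies what the paper omits rather than duplicating or diverging from it. Your closing remark about why the sum stopping at $n=m-1$ (and not $n=m$) is what removes any smallness restriction on $\tau$ is also a useful observation for the reader, since the paper repeatedly emphasizes that its error analysis is unconditional in $\tau$.
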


\subsection{Error estimates for the velocity and angular velocity}

In this subsection, we derive the following error estimates for the
velocity $\boldsymbol{u}$ and angular velocity $\boldsymbol{w}$.
\begin{thm}
\label{thm: error_estimate_uwq} Assume the exact solution satisfies  $\boldsymbol{u}\in H^{2}(0,T;\boldsymbol{H}^{-1}(\Omega))\bigcap H^{1}(0,T;\boldsymbol{H}^{2}(\Omega))\bigcap L^{\infty}(0,T;\boldsymbol{H}^{2}(\Omega))$,
and $\boldsymbol{w}\in H^{2}(0,T;\boldsymbol{H}^{-1}(\Omega))\bigcap H^{1}(0,T;\boldsymbol{H}^{2}(\Omega))\bigcap L^{\infty}(0,T;\boldsymbol{H}^{2}(\Omega))$,
then we have 
\begin{align}
 & \left\Vert e_{\boldsymbol{u}}^{m+1}\right\Vert ^{2}+\nu\tau\sum_{n=0}^{m}\left\Vert \nabla e_{\boldsymbol{u}}^{n+1}\right\Vert ^{2}+\left(\jmath+4\nu_r\tau\right)\left\Vert e_{\boldsymbol{w}}^{m+1}\right\Vert ^{2}+c_{1}\tau\sum_{n=0}^{m}\left\Vert \nabla e_{\boldsymbol{w}}^{n+1}\right\Vert ^{2}\nonumber \\
 & +2c_{2}\tau\sum_{n=0}^{m}\left\Vert \nabla\cdot e_{\boldsymbol{w}}^{n+1}\right\Vert ^{2}+|e_{q}^{m+1}|^{2}+\frac{\tau}{T}\sum_{n=0}^{m}|e_{q}^{n+1}|^{2}\leq C\tau^{2},\quad\forall\ 0\leq m\leq N-1.\label{eq:estuwq}
\end{align}
 
\end{thm}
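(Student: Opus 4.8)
Throughout I use the stability/regularity facts already available: by the Corollary of Theorem~\ref{thm_energy stability} (and \eqref{eq:u_uniform}--\eqref{eq:gradu_uniform}) the discrete solution satisfies $\|\boldsymbol u^n\|,\|\boldsymbol w^n\|,|q^n|\le C$ and $\tau\sum_n(\|\nabla\boldsymbol u^n\|^2+\|\nabla\boldsymbol w^n\|^2)\le C$, hence, by the triangle inequality and $\boldsymbol u,\boldsymbol w\in L^\infty(0,T;\boldsymbol H^2)$, also $\tau\sum_n(\|\nabla e_{\boldsymbol u}^n\|^2+\|\nabla e_{\boldsymbol w}^n\|^2)\le C$; and $e_{\boldsymbol u}^0=e_{\boldsymbol w}^0=e_q^0=0$. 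I abbreviate $\boldsymbol d_{\boldsymbol u}^{n+1}:=\int_{t^n}^{t^{n+1}}\boldsymbol u_t\,ds$, and similarly $\boldsymbol d_{\boldsymbol w}^{n+1}$.

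\emph{Step 1 (testing and the linear part).} The plan is to take the $\boldsymbol L^2$-inner product of \eqref{eq:SAVerroru} with $2\tau e_{\boldsymbol u}^{n+1}$ and of \eqref{eq:SAVerrow} with $2\tau e_{\boldsymbol w}^{n+1}$, to multiply \eqref{eq:SAVerrorq} by $2\tau e_q^{n+1}$, and to add. Using the identity \eqref{eq:e_identity_Euler} for the discrete time derivatives, the constraint \eqref{eq:SAVerrordivu} to annihilate $\nabla e_p^{n+1}$, and the curl identity together with \eqref{eq:norm curl}, the left side produces the telescoping quantity $E^{n+1}_{\mathrm{err}}-E^{n}_{\mathrm{err}}$ with $E^{n}_{\mathrm{err}}:=\|e_{\boldsymbol u}^n\|^2+(\jmath+4\tau\nu_r)\|e_{\boldsymbol w}^n\|^2+|e_q^n|^2$ — the weight $(\jmath+4\tau\nu_r)$ arising from the curl coupling exactly as in the proof of Theorem~\ref{thm_energy stability}, except that here \eqref{eq:SAVerroru} carries $\boldsymbol w^n$ (not $\boldsymbol w^{n+1}$), so $\boldsymbol w^n-\boldsymbol w(t^{n+1})=e_{\boldsymbol w}^n-\boldsymbol d_{\boldsymbol w}^{n+1}$ leaves only the consistency remainder $-4\tau\nu_r(\boldsymbol d_{\boldsymbol w}^{n+1},\nabla\times e_{\boldsymbol u}^{n+1})\le \varepsilon\tau\|\nabla e_{\boldsymbol u}^{n+1}\|^2+C\tau^2\int_{t^n}^{t^{n+1}}\|\boldsymbol w_t\|^2$ — plus the nonnegative dissipation $2\tau(\nu\|\nabla e_{\boldsymbol u}^{n+1}\|^2+c_1\|\nabla e_{\boldsymbol w}^{n+1}\|^2+c_2\|\nabla\cdot e_{\boldsymbol w}^{n+1}\|^2+\tfrac1T|e_q^{n+1}|^2)$ and nonnegative squares that we discard. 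The truncation terms obey $2\tau(R_{\boldsymbol u}^{n+1},e_{\boldsymbol u}^{n+1})\le\varepsilon\tau\|\nabla e_{\boldsymbol u}^{n+1}\|^2+C\tau\|R_{\boldsymbol u}^{n+1}\|_{\boldsymbol H^{-1}}^2$ and analogues, and $\tau\sum_n\|R_{\boldsymbol u}^{n+1}\|_{\boldsymbol H^{-1}}^2\le C\tau^2\|\boldsymbol u_{tt}\|^2_{L^2(0,T;\boldsymbol H^{-1})}$, $\tau\sum_n|R_q^{n+1}|^2\le C\tau^2$; this is where the $H^2(0,T;\boldsymbol H^{-1})$ hypotheses are used.

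\emph{Step 2 (the SAV cancellation of the nonlinear terms).} Writing $S^{n+1}=q^{n+1}\exp(t^{n+1}/T)$ as in \eqref{eq:q}, so that $S^{n+1}-1=\exp(t^{n+1}/T)e_q^{n+1}$ because $q(t^{n+1})\exp(t^{n+1}/T)=1$, and denoting by $I_1,I_2,I_3$ the nonlinear contributions produced by testing \eqref{eq:SAVerroru}, \eqref{eq:SAVerrow}, \eqref{eq:SAVerrorq}, one substitutes $\boldsymbol u^{n+1}=e_{\boldsymbol u}^{n+1}+\boldsymbol u(t^{n+1})$ and $\boldsymbol w^{n+1}=e_{\boldsymbol w}^{n+1}+\boldsymbol w(t^{n+1})$ inside $I_3$; a direct rearrangement (using $S^{n+1}-(S^{n+1}-1)=1$) gives the key identity
\begin{align*}
I_1+I_2+I_3 &= 2\tau\big[b(\boldsymbol u^n,\boldsymbol u^n,e_{\boldsymbol u}^{n+1})-b(\boldsymbol u(t^{n+1}),\boldsymbol u(t^{n+1}),e_{\boldsymbol u}^{n+1})\big]\\
&\quad+2\tau\jmath\big[(\boldsymbol u^n\cdot\nabla\boldsymbol w^n,e_{\boldsymbol w}^{n+1})-(\boldsymbol u(t^{n+1})\cdot\nabla\boldsymbol w(t^{n+1}),e_{\boldsymbol w}^{n+1})\big]\\
&\quad-2\tau(S^{n+1}-1)\big[(\boldsymbol u^n\cdot\nabla\boldsymbol u^n,\boldsymbol u(t^{n+1}))+\jmath(\boldsymbol u^n\cdot\nabla\boldsymbol w^n,\boldsymbol w(t^{n+1}))\big]=:J_1+J_2+J_3.
\end{align*}
The crucial point is that the artificial scalar $S^{n+1}$ has disappeared from $J_1$ and $J_2$, which are now exactly the nonlinear consistency errors of a standard semi-implicit scheme, while $J_3$ is a genuinely small remainder carrying the factor $e_q^{n+1}$.

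\emph{Step 3 (estimating $J_1,J_2,J_3$).} In $J_1,J_2$ I would decompose around the \emph{old} level: $\boldsymbol u^n=\boldsymbol u(t^n)+e_{\boldsymbol u}^n$, $\boldsymbol w^n=\boldsymbol w(t^n)+e_{\boldsymbol w}^n$, and $\boldsymbol u(t^n)-\boldsymbol u(t^{n+1})=-\boldsymbol d_{\boldsymbol u}^{n+1}$ (similarly for $\boldsymbol w$), which is essential because it keeps the error contributions free of the increments $\boldsymbol d^{n+1}$. The purely exact-solution time-difference pieces are $O(\tau^2)$ after Young's inequality against the dissipation and the $L^\infty(\boldsymbol H^2)$ bounds; the pieces linear in $e_{\boldsymbol u}^n,e_{\boldsymbol w}^n$ with a smooth ($\boldsymbol H^2$) coefficient are bounded via \eqref{eq:e_estimate for trilinear form}--\eqref{eq:e_estimate for trilinear form1} by $\varepsilon$ times the dissipation plus $C(\|e_{\boldsymbol u}^n\|^2+\|e_{\boldsymbol w}^n\|^2)$; and the quadratic-in-error pieces $b(e_{\boldsymbol u}^n,e_{\boldsymbol u}^n,e_{\boldsymbol u}^{n+1})$, $b(e_{\boldsymbol u}^n,e_{\boldsymbol w}^n,e_{\boldsymbol w}^{n+1})$ are treated in two dimensions, after moving the tested factor to the last slot by \eqref{eq:e_skew-symmetric1}, with the sharp estimate \eqref{eq:e_estimate for trilinear form2d}, giving e.g. $2\tau|b(e_{\boldsymbol u}^n,e_{\boldsymbol w}^n,e_{\boldsymbol w}^{n+1})|\le \varepsilon\tau(\|\nabla e_{\boldsymbol u}^{n+1}\|^2+\|\nabla e_{\boldsymbol w}^{n+1}\|^2)+C\tau(\|e_{\boldsymbol u}^n\|^2+\|e_{\boldsymbol w}^n\|^2)(\|\nabla e_{\boldsymbol u}^n\|^2+\|\nabla e_{\boldsymbol w}^n\|^2)$. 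In $J_3$ I would use $(\boldsymbol u^n\cdot\nabla\boldsymbol u^n,\boldsymbol u^n)=0$ and \eqref{eq:e_skew-symmetric2} to write $(\boldsymbol u^n\cdot\nabla\boldsymbol u^n,\boldsymbol u(t^{n+1}))=b(\boldsymbol u^n,\boldsymbol u^n,\boldsymbol d_{\boldsymbol u}^{n+1})-b(\boldsymbol u^n,\boldsymbol u^n,e_{\boldsymbol u}^n)$ (and likewise for the $\boldsymbol w$-term), bound each factor by $C\|\nabla\boldsymbol u^n\|\big(\|e_{\boldsymbol u}^n\|+\tau^{1/2}(\int_{t^n}^{t^{n+1}}\|\boldsymbol u_t\|_2^2)^{1/2}\big)$ using \eqref{eq:e_estimate for trilinear form2}, \eqref{eq:e_estimate for trilinear form2d1} and $\|\boldsymbol u^n\|,\|\boldsymbol w^n\|\le C$, and $|\exp(t^{n+1}/T)|\le e$, to obtain $|J_3|\le \varepsilon\tfrac{\tau}{T}|e_q^{n+1}|^2+C\tau\,|e_q^{n+1}|^2(\|\nabla\boldsymbol u^n\|^2+1)+C\tau(\|e_{\boldsymbol u}^n\|^2+\|e_{\boldsymbol w}^n\|^2)\|\nabla\boldsymbol u^n\|^2+C\tau^2\int_{t^n}^{t^{n+1}}(\|\boldsymbol u_t\|_2^2+\|\boldsymbol w_t\|_2^2)$.

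\emph{Step 4 (Gronwall).} Collecting everything, choosing $\varepsilon$ small enough to absorb the $\|\nabla e_{\boldsymbol u}^{n+1}\|^2,\|\nabla e_{\boldsymbol w}^{n+1}\|^2,|e_q^{n+1}|^2$ contributions into the dissipation (using $e_{\boldsymbol u}^0=e_{\boldsymbol w}^0=0$ for the index shift), and noting that every surviving coefficient in front of $E^n_{\mathrm{err}}$, $\|e_{\boldsymbol u}^n\|^2$, $\|e_{\boldsymbol w}^n\|^2$, $|e_q^{n+1}|^2$ is of the form $C(\|\nabla\boldsymbol u^n\|^2+\|\nabla\boldsymbol w^n\|^2+\|\nabla e_{\boldsymbol u}^n\|^2+\|\nabla e_{\boldsymbol w}^n\|^2+1)=:d_n$ with $\tau\sum_n d_n\le C$, one reaches
\[
E^{m+1}_{\mathrm{err}}+\tau\sum_{n=0}^{m}\mathcal D^{n+1}\ \le\ \tau\sum_{n=0}^{m}E^{n}_{\mathrm{err}}\,d_n+C\tau^2,
\qquad \tau\sum_{n=0}^{m}d_n\le C,
\]
where $\mathcal D^{n+1}=\nu\|\nabla e_{\boldsymbol u}^{n+1}\|^2+c_1\|\nabla e_{\boldsymbol w}^{n+1}\|^2+2c_2\|\nabla\cdot e_{\boldsymbol w}^{n+1}\|^2+\tfrac1T|e_q^{n+1}|^2$ is the functional on the left of \eqref{eq:estuwq}. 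Since $E^0_{\mathrm{err}}=0$, the discrete Gronwall inequality (Lemma~\ref{lem: gronwall2}) gives $E^{m+1}_{\mathrm{err}}+\tau\sum_{n=0}^m\mathcal D^{n+1}\le C\tau^2$, which is precisely \eqref{eq:estuwq}.

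The main obstacle — and the reason two dimensions is needed — is the unconditional control of the coupled convective term $\jmath\,\boldsymbol u\cdot\nabla\boldsymbol w$ and of the quadratic-in-error trilinear terms: this works only because (i) the SAV rearrangement in Step 2 strips the factor $S^{n+1}$ off the leading nonlinear errors, (ii) the decomposition of $J_1,J_2$ about $t^n$ prevents the time increments $\boldsymbol d^{n+1}$ from being multiplied by low-regularity discrete quantities, and (iii) the sharp 2D estimates \eqref{eq:e_estimate for trilinear form2d}--\eqref{eq:e_estimate for trilinear form2d2} allow all gradient-of-error norms to be routed into the Gronwall coefficient $d_n$, whose time-weighted sum is bounded a priori by the energy stability established in Section~\ref{sec:Scheme}. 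Getting these absorptions/groupings exactly right (so that no term summing to $O(\tau)$, or requiring $\tau\le\tau_0$, survives) is the delicate part of the argument.
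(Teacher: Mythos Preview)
Your overall strategy is essentially the paper's: test the three error equations, exploit the SAV structure so that the ``dangerous'' pieces of the nonlinearity cancel, bound the remaining nonlinear consistency terms by a combination of absorbable dissipation, $O(\tau^2)$ truncation, and Gronwall coefficients that are summable thanks to the a~priori bound \eqref{eq:gradu_uniform}. Your $J_1,J_2$ treatment matches Lemmas~\ref{lem: error_estimate_u}--\ref{lem: error_estimate_w}, and your $J_3$ is the analogue of Lemma~\ref{lem: error_estimate_q} (the paper keeps the smooth factor $\boldsymbol u(t^{n+1})$ in the $H^2$ slot rather than pivoting around $\boldsymbol u^n$, which makes the estimate of $b(\boldsymbol u^n,\boldsymbol u^n,e_{\boldsymbol u}^n)$ a bit cleaner, but your variant can be made to work after one further splitting).

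There is, however, a real gap in Step~4. In your $J_3$ bound you produce a term of the form
\[
C\,\tau\,\bigl(1+\|\nabla\boldsymbol u^{n}\|^{2}\bigr)\,|e_q^{\,n+1}|^{2},
\]
i.e.\ the error factor is at level $n{+}1$ while the coefficient involves $\|\nabla\boldsymbol u^{n}\|^{2}$. You then try to push this into the Gronwall inequality $E^{m+1}_{\mathrm{err}}\le \tau\sum_{n}E^{n}_{\mathrm{err}}d_n+C\tau^2$ with $d_n=C(1+\|\nabla\boldsymbol u^n\|^2+\cdots)$. After the unavoidable index shift this leaves a top-level contribution $\tau d_m|e_q^{m+1}|^2$ on the right; since $\tau\|\nabla\boldsymbol u^{m}\|^{2}$ is only summable, not uniformly small, this term cannot be absorbed into $|e_q^{m+1}|^2$ on the left without a time-step restriction. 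Your claim that ``choosing $\varepsilon$ small'' handles it does not apply here: the problematic coefficient is $C$, not $\varepsilon$.

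The paper resolves exactly this point with a two-pass ``maximum'' argument that you are missing. In Lemma~\ref{lem: error_estimate_q} the constant in front of $\|\nabla\boldsymbol u^n\|^2|e_q^{n+1}|^2$ is chosen to be the explicit $\tfrac{1}{4k_2}$ (with $k_2$ from \eqref{eq:gradu_uniform}). One then sets $m^\ast=\arg\max_n|e_q^{n+1}|$, sums first to $m^\ast$, and replaces $|e_q^{n+1}|^2\le|e_q^{m^\ast+1}|^2$, so that
\[
\frac{\tau}{2k_2}\sum_{n=0}^{m^\ast}\|\nabla\boldsymbol u^{n}\|^{2}\,|e_q^{m^\ast+1}|^{2}\ \le\ \tfrac12\,|e_q^{m^\ast+1}|^{2}
\]
is absorbed by the left-hand side \emph{unconditionally}. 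Having bounded $\max_n|e_q^{n+1}|^2\le C\tau^2$, one returns and sums to an arbitrary $m$. This device (cf.\ \eqref{eq:eqmax}--\eqref{eq:e_error_uwq_final4}) is the missing ingredient in your Step~4; once you insert it, your argument coincides with the paper's.
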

The proof of the above theorem will be carried out with a sequence
of lemmas below.

First, we derive a estimate for the velocity error.
\begin{lem}
\label{lem: error_estimate_u} Under the assumptions of Theorem \ref{thm: error_estimate_uwq},
we have 
\begin{align}
 & \frac{\left\Vert e_{\boldsymbol{u}}^{n+1}\right\Vert ^{2}-\left\Vert e_{\boldsymbol{u}}^{n}\right\Vert ^{2}+\left\Vert e_{\boldsymbol{u}}^{n+1}-e_{\boldsymbol{u}}^{n}\right\Vert ^{2}}{2\tau}+\frac{\nu+\nu_{r}}{2}\left\Vert \nabla e_{\boldsymbol{u}}^{n+1}\right\Vert ^{2}\nonumber \\
 & \leq-\exp\left(\frac{t^{n+1}}{T}\right)e_{q}^{n+1}\left(\boldsymbol{u}^{n}\cdot\nabla\boldsymbol{u}^{n},e_{\boldsymbol{u}}^{n+1}\right)+2\nu_{r}\left\Vert e_{\boldsymbol{w}}^{n}\right\Vert ^{2}+C\left(\left\Vert \boldsymbol{u}(t^{n})\right\Vert _{2}^{2}+\left\Vert \boldsymbol{u}(t^{n+1})\right\Vert _{2}^{2}+\left\Vert \nabla e_{\boldsymbol{u}}^{n}\right\Vert ^{2}\right)\left\Vert e_{\boldsymbol{u}}^{n}\right\Vert ^{2}\nonumber \\
 & \quad+C\tau\int_{t^{n}}^{t^{n+1}}\left\Vert \boldsymbol{u}_{tt}(s)\right\Vert _{-1}^{2}ds+C\tau\left\Vert \boldsymbol{u}(t^{n+1})\right\Vert _{2}^{2}\int_{t^{n}}^{t^{n+1}}\left\Vert \boldsymbol{u}_{t}(s)\right\Vert ^{2}ds\nonumber \\
 & \quad+C\tau\left\Vert \boldsymbol{u}^{n}\right\Vert \int_{t^{n}}^{t^{n+1}}\left\Vert \boldsymbol{u}_{t}(s)\right\Vert _{2}^{2}ds+C\tau\int_{t^{n}}^{t^{n+1}}\left\Vert \boldsymbol{w}_{t}(s)\right\Vert ^{2}ds,\qquad\forall\ 0\leq n\leq N-1.\label{eq:error_u}
\end{align}
\end{lem}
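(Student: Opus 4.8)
The plan is to test the velocity error equation \eqref{eq:SAVerroru} with $e_{\boldsymbol{u}}^{n+1}$ in $\boldsymbol{L}^2(\Omega)$ and bound each resulting term. The discrete time derivative produces $\tfrac{1}{2\tau}\bigl(\|e_{\boldsymbol{u}}^{n+1}\|^2-\|e_{\boldsymbol{u}}^{n}\|^2+\|e_{\boldsymbol{u}}^{n+1}-e_{\boldsymbol{u}}^{n}\|^2\bigr)$ via the identity \eqref{eq:e_identity_Euler}; the diffusion term gives $\nu_0\|\nabla e_{\boldsymbol{u}}^{n+1}\|^2$ after integration by parts; and the pressure term drops out since $(\nabla e_p^{n+1},e_{\boldsymbol{u}}^{n+1})=-(e_p^{n+1},\nabla\cdot e_{\boldsymbol{u}}^{n+1})=0$ by \eqref{eq:SAVerrordivu}. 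We keep only $\tfrac{\nu_0}{2}\|\nabla e_{\boldsymbol{u}}^{n+1}\|^2=\tfrac{\nu+\nu_r}{2}\|\nabla e_{\boldsymbol{u}}^{n+1}\|^2$ on the left, so the remaining diffusion strength $\tfrac{\nu_0}{2}$ is available to absorb the various $\|\nabla e_{\boldsymbol{u}}^{n+1}\|$ contributions by Young's inequality. The truncation term is handled by $(R_{\boldsymbol{u}}^{n+1},e_{\boldsymbol{u}}^{n+1})\le\|R_{\boldsymbol{u}}^{n+1}\|_{-1}\|\nabla e_{\boldsymbol{u}}^{n+1}\|$ together with $\|R_{\boldsymbol{u}}^{n+1}\|_{-1}^2\le\tau\int_{t^n}^{t^{n+1}}\|\boldsymbol{u}_{tt}(s)\|_{-1}^2\,ds$ (Cauchy--Schwarz on the integral form of $R_{\boldsymbol{u}}^{n+1}$). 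For the curl coupling, split $\boldsymbol{w}^n-\boldsymbol{w}(t^{n+1})=e_{\boldsymbol{w}}^n+\bigl(\boldsymbol{w}(t^n)-\boldsymbol{w}(t^{n+1})\bigr)$; integrating by parts via $(\nabla\times\boldsymbol{v},\boldsymbol{u})=(\boldsymbol{v},\nabla\times\boldsymbol{u})$ and using $\|\nabla\times e_{\boldsymbol{u}}^{n+1}\|\le\|\nabla e_{\boldsymbol{u}}^{n+1}\|$ from \eqref{eq:norm curl} with Young's inequality, the $e_{\boldsymbol{w}}^n$ part yields $2\nu_r\|e_{\boldsymbol{w}}^n\|^2+\tfrac{\nu_r}{2}\|\nabla e_{\boldsymbol{u}}^{n+1}\|^2$, while $\|\boldsymbol{w}(t^n)-\boldsymbol{w}(t^{n+1})\|^2\le\tau\int_{t^n}^{t^{n+1}}\|\boldsymbol{w}_t(s)\|^2\,ds$ gives the last term of \eqref{eq:error_u}.

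The crux is the convective difference $\bigl(q^{n+1}\exp(t^{n+1}/T)\,\boldsymbol{u}^n\cdot\nabla\boldsymbol{u}^n-\boldsymbol{u}(t^{n+1})\cdot\nabla\boldsymbol{u}(t^{n+1}),e_{\boldsymbol{u}}^{n+1}\bigr)$. Since $q(t^{n+1})=\exp(-t^{n+1}/T)$, one has $q^{n+1}\exp(t^{n+1}/T)-1=\exp(t^{n+1}/T)\,e_q^{n+1}$, so this quantity equals $\exp(t^{n+1}/T)\,e_q^{n+1}(\boldsymbol{u}^n\cdot\nabla\boldsymbol{u}^n,e_{\boldsymbol{u}}^{n+1})$ plus $b(\boldsymbol{u}^n,\boldsymbol{u}^n,e_{\boldsymbol{u}}^{n+1})-b(\boldsymbol{u}(t^{n+1}),\boldsymbol{u}(t^{n+1}),e_{\boldsymbol{u}}^{n+1})$. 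The first piece is left untouched on the right of \eqref{eq:error_u}: it carries the scalar factor $e_q^{n+1}$ and will be cancelled later against the matching contribution from the $q$-error equation \eqref{eq:SAVerrorq}, so no estimate is attempted on it here. For the second piece, set $\delta^{n+1}:=\boldsymbol{u}(t^n)-\boldsymbol{u}(t^{n+1})=-\int_{t^n}^{t^{n+1}}\boldsymbol{u}_t(s)\,ds$ and write $\boldsymbol{u}^n-\boldsymbol{u}(t^{n+1})=e_{\boldsymbol{u}}^n+\delta^{n+1}$, which turns it into the four trilinear terms $b(e_{\boldsymbol{u}}^n,\boldsymbol{u}^n,e_{\boldsymbol{u}}^{n+1})$, $b(\delta^{n+1},\boldsymbol{u}^n,e_{\boldsymbol{u}}^{n+1})$, $b(\boldsymbol{u}(t^{n+1}),e_{\boldsymbol{u}}^n,e_{\boldsymbol{u}}^{n+1})$ and $b(\boldsymbol{u}(t^{n+1}),\delta^{n+1},e_{\boldsymbol{u}}^{n+1})$.

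Each fragment is then matched to the appropriate inequality from the trilinear menu. In $b(e_{\boldsymbol{u}}^n,\boldsymbol{u}^n,e_{\boldsymbol{u}}^{n+1})$ I further substitute $\boldsymbol{u}^n=\boldsymbol{u}(t^n)+e_{\boldsymbol{u}}^n$: the part $b(e_{\boldsymbol{u}}^n,\boldsymbol{u}(t^n),e_{\boldsymbol{u}}^{n+1})$ is bounded by \eqref{eq:e_estimate for trilinear form} and Young, giving $\epsilon\|\nabla e_{\boldsymbol{u}}^{n+1}\|^2+C\|\boldsymbol{u}(t^n)\|_2^2\|e_{\boldsymbol{u}}^n\|^2$, while the genuinely quadratic part $b(e_{\boldsymbol{u}}^n,e_{\boldsymbol{u}}^n,e_{\boldsymbol{u}}^{n+1})$ — the one place where $d=2$ is essential — is estimated by the two-dimensional interpolation bound \eqref{eq:e_estimate for trilinear form2d} (valid since $e_{\boldsymbol{u}}^n\in\boldsymbol{V}$) as $C\|\nabla e_{\boldsymbol{u}}^n\|\,\|e_{\boldsymbol{u}}^n\|\,\|\nabla e_{\boldsymbol{u}}^{n+1}\|\le\epsilon\|\nabla e_{\boldsymbol{u}}^{n+1}\|^2+C\|\nabla e_{\boldsymbol{u}}^n\|^2\|e_{\boldsymbol{u}}^n\|^2$. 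The term $b(\boldsymbol{u}(t^{n+1}),e_{\boldsymbol{u}}^n,e_{\boldsymbol{u}}^{n+1})$ is treated by \eqref{eq:e_estimate for trilinear form1}, producing $C\|\boldsymbol{u}(t^{n+1})\|_2^2\|e_{\boldsymbol{u}}^n\|^2$; and the two consistency terms $b(\delta^{n+1},\boldsymbol{u}^n,e_{\boldsymbol{u}}^{n+1})$, $b(\boldsymbol{u}(t^{n+1}),\delta^{n+1},e_{\boldsymbol{u}}^{n+1})$ are handled by \eqref{eq:e_estimate for trilinear form1} together with $\|\delta^{n+1}\|^2\le\tau\int_{t^n}^{t^{n+1}}\|\boldsymbol{u}_t(s)\|^2\,ds$, $\|\delta^{n+1}\|_2^2\le\tau\int_{t^n}^{t^{n+1}}\|\boldsymbol{u}_t(s)\|_2^2\,ds$ and the uniform bound $\|\boldsymbol{u}^n\|^2\le k_1$ from \eqref{eq:u_uniform}, yielding $C\tau\|\boldsymbol{u}^n\|\int_{t^n}^{t^{n+1}}\|\boldsymbol{u}_t(s)\|_2^2\,ds$ and $C\tau\|\boldsymbol{u}(t^{n+1})\|_2^2\int_{t^n}^{t^{n+1}}\|\boldsymbol{u}_t(s)\|^2\,ds$. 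Choosing all Young constants small enough that the total coefficient of $\|\nabla e_{\boldsymbol{u}}^{n+1}\|^2$ stays below $\tfrac{\nu_0}{2}$ and absorbing it into the retained $\tfrac{\nu+\nu_r}{2}\|\nabla e_{\boldsymbol{u}}^{n+1}\|^2$ gives exactly \eqref{eq:error_u}.

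The main obstacle is not any single delicate estimate but the bookkeeping: each fragment of the convective splitting must be paired with precisely the right trilinear inequality so that every error norm enters with the exact power and companion factor required for the later discrete Gronwall argument — in particular keeping $\|e_{\boldsymbol{u}}^n\|^2$ as the factored-out quantity in all Gronwall coefficients and never letting $\|\nabla e_{\boldsymbol{u}}^n\|^4$ appear, which is exactly what forces the use of the two-dimensional interpolation estimate \eqref{eq:e_estimate for trilinear form2d}. The $e_q^{n+1}$-term is deliberately carried along unestimated and dealt with only when this lemma is combined with the angular-velocity and SAV error estimates.
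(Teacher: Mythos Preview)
Your proposal is correct and follows essentially the same approach as the paper's proof: test \eqref{eq:SAVerroru} with $e_{\boldsymbol{u}}^{n+1}$, isolate the $e_q^{n+1}$ piece of the convective term to be cancelled later, split the remaining convective difference into pieces estimated by \eqref{eq:e_estimate for trilinear form}, \eqref{eq:e_estimate for trilinear form1} and the two-dimensional bound \eqref{eq:e_estimate for trilinear form2d}, and handle the curl coupling via \eqref{eq:norm curl} and Young. The only cosmetic difference is that the paper writes $b(\boldsymbol{u}(t^{n+1}),\boldsymbol{u}(t^{n+1}),\cdot)-b(\boldsymbol{u}^n,\boldsymbol{u}^n,\cdot)=b(\boldsymbol{u}(t^{n+1})-\boldsymbol{u}^n,\boldsymbol{u}(t^{n+1}),\cdot)+b(\boldsymbol{u}^n,\boldsymbol{u}(t^{n+1})-\boldsymbol{u}^n,\cdot)$ and then expands $\boldsymbol{u}^n=\boldsymbol{u}(t^n)+e_{\boldsymbol{u}}^n$ in the second slot, whereas you split symmetrically in the opposite order; the resulting collection of trilinear fragments and their estimates are the same.
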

\begin{proof}
Taking the inner product of \eqref{eq:SAVerroru} with $e_{\boldsymbol{u}}^{n+1}$
and using \eqref{eq:SAVerrordivu}, we obtain 
\begin{align}
 & \frac{\left\Vert e_{\boldsymbol{u}}^{n+1}\right\Vert ^{2}-\left\Vert e_{\boldsymbol{u}}^{n}\right\Vert ^{2}+\left\Vert e_{\boldsymbol{u}}^{n+1}-e_{\boldsymbol{u}}^{n}\right\Vert ^{2}}{2\tau}+\nu_{0}\left\Vert \nabla e_{\boldsymbol{u}}^{n+1}\right\Vert ^{2}\nonumber \\
 & =(R_{\boldsymbol{u}}^{n+1},e_{\boldsymbol{u}}^{n+1})+\left(\boldsymbol{u}(t^{n+1})\cdot\nabla\boldsymbol{u}(t^{n+1})-q^{n+1}\exp\left(\frac{t^{n+1}}{T}\right)\boldsymbol{u}^{n}\cdot\nabla\boldsymbol{u}^{n},e_{\boldsymbol{u}}^{n+1}\right)\nonumber \\
 & \quad+2\nu_{r}\left(\nabla\times\boldsymbol{w}^{n}-\nabla\times\boldsymbol{w}(t^{n+1}),e_{\boldsymbol{u}}^{n+1}\right).\label{eq:e_error_u_inner_product}
\end{align}
For the first term on the right hand side of \eqref{eq:e_error_u_inner_product},
we get 
\begin{equation}
(R_{\boldsymbol{u}}^{n+1},e_{\boldsymbol{u}}^{n+1})\leq\frac{\nu}{8}\left\Vert \nabla e_{\boldsymbol{u}}^{n+1}\right\Vert ^{2}+C\tau\int_{t^{n}}^{t^{n+1}}\left\Vert \boldsymbol{u}_{tt}(s)\right\Vert _{-1}^{2}ds.\label{eq:e_error_inner_Ru}
\end{equation}
For the second term on the right hand side of \eqref{eq:e_error_u_inner_product},
we have 
\begin{align}
 &\left(\boldsymbol{u}(t^{n+1})\cdot\nabla\boldsymbol{u}(t^{n+1})-q^{n+1}\exp(\frac{t^{n+1}}{T})\boldsymbol{u}^{n}\cdot\nabla\boldsymbol{u}^{n},e_{\boldsymbol{u}}^{n+1}\right)\nonumber \\ &=\left((\boldsymbol{u}(t^{n+1})-\boldsymbol{u}^{n})\cdot\nabla\boldsymbol{u}(t^{n+1}),e_{\boldsymbol{u}}^{n+1}\right)+\left(\boldsymbol{u}^{n}\cdot\nabla(\boldsymbol{u}(t^{n+1})-\boldsymbol{u}^{n}),e_{\boldsymbol{u}}^{n+1}\right)\nonumber \\
 & \quad-\exp\left(\frac{t^{n+1}}{T}\right)e_{q}^{n+1}\left(\boldsymbol{u}^{n}\cdot\nabla\boldsymbol{u}^{n},e_{\boldsymbol{u}}^{n+1}\right).\label{eq:e_error_u_nonlinear_convective}
\end{align}
Using Cauchy-Schwarz inequality and \eqref{eq:e_estimate for trilinear form},
the first term on the right hand side of \eqref{eq:e_error_u_nonlinear_convective}
can be bounded by 
\begin{align}
&\left((\boldsymbol{u}(t^{n+1})-\boldsymbol{u}^{n})\cdot\nabla\boldsymbol{u}(t^{n+1}),e_{\boldsymbol{u}}^{n+1}\right)\nonumber \\
&\leq C_{b,1}\left\Vert \boldsymbol{u}(t^{n+1})-\boldsymbol{u}^{n}\right\Vert \left\Vert \boldsymbol{u}(t^{n+1})\right\Vert _{2}\left\Vert \nabla e_{\boldsymbol{u}}^{n+1}\right\Vert \nonumber \\
 & \le\frac{\nu}{8}\left\Vert \nabla e_{\boldsymbol{u}}^{n+1}\right\Vert ^{2}+C\left\Vert \boldsymbol{u}(t^{n+1})-\boldsymbol{u}^{n}\right\Vert ^{2}\left\Vert \boldsymbol{u}(t^{n+1})\right\Vert _{2}^{2}\nonumber \\
 & \le\frac{\nu}{8}\left\Vert \nabla e_{\boldsymbol{u}}^{n+1}\right\Vert ^{2}+C\left\Vert \boldsymbol{u}(t^{n+1})\right\Vert _{2}^{2}\left\Vert e_{\boldsymbol{u}}^{n}\right\Vert ^{2}+C\tau\left\Vert \boldsymbol{u}(t^{n+1})\right\Vert _{2}^{2}\int_{t^{n}}^{t^{n+1}}\left\Vert \boldsymbol{u}_{t}(s)\right\Vert ^{2}ds.\label{eq:e_error_u_nonlinear_convective1}
\end{align}
Similarly, using Cauchy-Schwarz inequality, \eqref{eq:e_estimate for trilinear form}-\eqref{eq:e_estimate for trilinear form2d}
and Young inequality, the second term on the right hand side of \eqref{eq:e_error_u_nonlinear_convective}
can be estimated as follows,
\begin{align}
&(\boldsymbol{u}^{n}\cdot\nabla(\boldsymbol{u}(t^{n+1})-\boldsymbol{u}^{n}),e_{\boldsymbol{u}}^{n+1}) \nonumber \\ &=\left(\boldsymbol{u}^{n}\cdot\nabla(\boldsymbol{u}(t^{n+1})-\boldsymbol{u}(t^{n})),e_{\boldsymbol{u}}^{n+1}\right)-\left(e_{\boldsymbol{u}}^{n}\cdot\nabla e_{\boldsymbol{u}}^{n},e_{\boldsymbol{u}}^{n+1}\right)-\left(\boldsymbol{u}(t^{n})\cdot\nabla e_{\boldsymbol{u}}^{n},e_{\boldsymbol{u}}^{n+1}\right)\nonumber \\
 & \le C_{b,1}\left\Vert \boldsymbol{u}^{n}\right\Vert \left\Vert \boldsymbol{u}(t^{n+1})-\boldsymbol{u}(t^{n})\right\Vert _{2}\left\Vert \nabla e_{\boldsymbol{u}}^{n+1}\right\Vert +C_{b,6}\left\Vert \nabla e_{\boldsymbol{u}}^{n}\right\Vert ^{1/2}\left\Vert e_{\boldsymbol{u}}^{n}\right\Vert ^{1/2}\left\Vert \nabla e_{\boldsymbol{u}}^{n}\right\Vert ^{1/2}\left\Vert e_{\boldsymbol{u}}^{n}\right\Vert ^{1/2}\left\Vert \nabla e_{\boldsymbol{u}}^{n+1}\right\Vert \nonumber \\
 & \quad+C_{b,2}\left\Vert \boldsymbol{u}(t^{n})\right\Vert _{2}\left\Vert e_{\boldsymbol{u}}^{n}\right\Vert \left\Vert \nabla e_{\boldsymbol{u}}^{n+1}\right\Vert \nonumber \\
 & \le C_{b,1}\left\Vert \boldsymbol{u}^{n}\right\Vert \left\Vert \int_{t^{n}}^{t^{n+1}}\boldsymbol{u}_{t}(s)ds\right\Vert _{2}\left\Vert \nabla e_{\boldsymbol{u}}^{n+1}\right\Vert +C_{b,6}\left\Vert \nabla e_{\boldsymbol{u}}^{n}\right\Vert \left\Vert e_{\boldsymbol{u}}^{n}\right\Vert \left\Vert \nabla e_{\boldsymbol{u}}^{n+1}\right\Vert+C_{b,2}\left\Vert \boldsymbol{u}(t^{n})\right\Vert _{2}\left\Vert e_{\boldsymbol{u}}^{n}\right\Vert \left\Vert \nabla e_{\boldsymbol{u}}^{n+1}\right\Vert \nonumber \\
 & \leq\frac{\nu}{8}\left\Vert \nabla e_{\boldsymbol{u}}^{n+1}\right\Vert ^{2}+C\left(\left\Vert \boldsymbol{u}(t^{n})\right\Vert _{2}^{2}+\left\Vert \nabla e_{\boldsymbol{u}}^{n}\right\Vert ^{2}\right)\left\Vert e_{\boldsymbol{u}}^{n}\right\Vert ^{2}+C\tau\left\Vert \boldsymbol{u}^{n}\right\Vert \int_{t^{n}}^{t^{n+1}}\left\Vert \boldsymbol{u}_{t}(s)\right\Vert _{2}^{2}ds.\label{eq:e_error_u_nonlinear_convective2}
\end{align}
For the last term on the right hand side of \eqref{eq:e_error_u_inner_product},
we invoke with Cauchy-Schwarz inequality, Young inequality and \eqref{eq:norm curl}
to get
\begin{align}
&2\nu_{r}\left(\nabla\times\boldsymbol{w}^{n}-\nabla\times\boldsymbol{w}(t^{n+1}),e_{\boldsymbol{u}}^{n+1}\right) \nonumber \\
& =2\nu_{r}\left(\boldsymbol{w}^{n}-\boldsymbol{w}(t^{n+1}),\nabla\times e_{\boldsymbol{u}}^{n+1}\right)\nonumber \\
 & =2\nu_{r}\left(e_{\boldsymbol{w}}^{n},\nabla\times e_{\boldsymbol{u}}^{n+1}\right)-2\nu_{r}\left(\boldsymbol{w}(t^{n+1})-\boldsymbol{w}(t_{n}),\nabla\times e_{\boldsymbol{u}}^{n+1}\right)\nonumber \\
 & \le2\nu_{r}\left\Vert e_{\boldsymbol{w}}^{n}\right\Vert \left\Vert \nabla\times e_{\boldsymbol{u}}^{n+1}\right\Vert +2\nu_{r}\left\Vert \int_{t^{n}}^{t^{n+1}}\boldsymbol{w}_{t}(s)ds\right\Vert \left\Vert \nabla\times e_{\boldsymbol{u}}^{n+1}\right\Vert \nonumber \\
 & \leq\left(\frac{\nu_{r}}{2}+\frac{\nu}{8}\right)\left\Vert \nabla e_{\boldsymbol{u}}^{n+1}\right\Vert ^{2}+2\nu_{r}\left\Vert e_{\boldsymbol{w}}^{n}\right\Vert ^{2}+C\tau\int_{t^{n}}^{t^{n+1}}\left\Vert \boldsymbol{w}_{t}(s)\right\Vert ^{2}ds.\label{eq:e_error_u_couple_force1}
\end{align}
Finally, combining \eqref{eq:e_error_u_inner_product} with \eqref{eq:e_error_inner_Ru}-\eqref{eq:e_error_u_couple_force1}
leads to the desired result. 
\end{proof}
\medskip{}
Next, we derive a estimate for the angular velocity errors in $L^{2}$-
norm.
\begin{lem}
\label{lem: error_estimate_w} Under the assumptions of Theorem \ref{thm: error_estimate_uwq},
we have 
\begin{align}
 & \jmath\begin{aligned}\frac{\left\Vert e_{\boldsymbol{w}}^{n+1}\right\Vert ^{2}-\left\Vert e_{\boldsymbol{w}}^{n}\right\Vert ^{2}+\left\Vert e_{\boldsymbol{w}}^{n+1}-e_{\boldsymbol{w}}^{n}\right\Vert ^{2}}{2\tau}\end{aligned}
+\frac{c_{1}}{2}\left\Vert \nabla e_{\boldsymbol{w}}^{n+1}\right\Vert ^{2}+c_{2}\left\Vert \nabla\cdot e_{\boldsymbol{w}}^{n+1}\right\Vert ^{2}+2\nu_{r}\left\Vert e_{\boldsymbol{w}}^{n+1}\right\Vert ^{2}\nonumber \\
 & \le\frac{\nu_{r}}{2}\left\Vert \nabla e_{\boldsymbol{u}}^{n+1}\right\Vert ^{2}-\jmath\exp\left(\frac{t^{n+1}}{T}\right)e_{q}^{n+1}\left(\boldsymbol{u}^{n}\cdot\nabla\boldsymbol{w}^{n},e_{\boldsymbol{w}}^{n+1}\right)\nonumber \\
 & \quad+C\left(\left\Vert \boldsymbol{w}(t^{n+1})\right\Vert _{2}^{2}+\left\Vert \nabla e_{\boldsymbol{u}}^{n}\right\Vert ^{2}\right)\left\Vert e_{\boldsymbol{u}}^{n}\right\Vert ^{2}+C\left(\left\Vert \boldsymbol{u}(t^{n})\right\Vert _{2}^{2}+\left\Vert \nabla e_{\boldsymbol{w}}^{n}\right\Vert ^{2}\right)\left\Vert e_{\boldsymbol{w}}^{n}\right\Vert ^{2}\nonumber \\
 & \quad+C\tau\int_{t^{n}}^{t^{n+1}}\left\Vert \boldsymbol{w}_{tt}(s)\right\Vert _{-1}^{2}ds+C\tau\left\Vert \boldsymbol{w}(t^{n+1})\right\Vert _{2}^{2}\int_{t^{n}}^{t^{n+1}}\left\Vert \boldsymbol{w}_{t}(s)\right\Vert ^{2}ds\nonumber \\
 & \quad+C\tau\left\Vert \boldsymbol{u}^{n}\right\Vert \int_{t^{n}}^{t^{n+1}}\left\Vert \boldsymbol{w}_{t}(s)\right\Vert _{2}^{2}ds,\qquad\forall\ 0\leq n\leq N-1.\label{eq:errorw}
\end{align}
\end{lem}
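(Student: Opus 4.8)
The plan is to mirror the proof of Lemma~\ref{lem: error_estimate_u}, now testing the angular-momentum error equation \eqref{eq:SAVerrow} in $\boldsymbol{L}^{2}$ with $e_{\boldsymbol{w}}^{n+1}$. Applying the algebraic identity \eqref{eq:e_identity_Euler} to $\jmath(\delta_{t}e_{\boldsymbol{w}}^{n+1},e_{\boldsymbol{w}}^{n+1})$ and integrating the dissipative terms by parts puts on the left exactly $\jmath\frac{\|e_{\boldsymbol{w}}^{n+1}\|^{2}-\|e_{\boldsymbol{w}}^{n}\|^{2}+\|e_{\boldsymbol{w}}^{n+1}-e_{\boldsymbol{w}}^{n}\|^{2}}{2\tau}+c_{1}\|\nabla e_{\boldsymbol{w}}^{n+1}\|^{2}+c_{2}\|\nabla\cdot e_{\boldsymbol{w}}^{n+1}\|^{2}+4\nu_{r}\|e_{\boldsymbol{w}}^{n+1}\|^{2}$, while the right side is the sum of the truncation contribution $(R_{\boldsymbol{w}}^{n+1},e_{\boldsymbol{w}}^{n+1})$, the coupling contribution $2\nu_{r}(\nabla\times e_{\boldsymbol{u}}^{n+1},e_{\boldsymbol{w}}^{n+1})$, and the nonlinear defect $\jmath\bigl(\boldsymbol{u}(t^{n+1})\cdot\nabla\boldsymbol{w}(t^{n+1})-q^{n+1}\exp(t^{n+1}/T)\,\boldsymbol{u}^{n}\cdot\nabla\boldsymbol{w}^{n},e_{\boldsymbol{w}}^{n+1}\bigr)$. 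The whole point is to bound the right side so that every factor $\|\nabla e_{\boldsymbol{w}}^{n+1}\|$ that is produced is absorbed into $c_{1}\|\nabla e_{\boldsymbol{w}}^{n+1}\|^{2}$ (keeping at least $\tfrac{c_{1}}{2}\|\nabla e_{\boldsymbol{w}}^{n+1}\|^{2}$), every factor $\|e_{\boldsymbol{w}}^{n+1}\|$ is absorbed into $4\nu_{r}\|e_{\boldsymbol{w}}^{n+1}\|^{2}$ (keeping $2\nu_{r}\|e_{\boldsymbol{w}}^{n+1}\|^{2}$), and everything else takes the form advertised in \eqref{eq:errorw}.

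The two non-convective terms are routine. For the truncation term I would use $\|R_{\boldsymbol{w}}^{n+1}\|_{-1}^{2}\le\tau\int_{t^{n}}^{t^{n+1}}\|\boldsymbol{w}_{tt}(s)\|_{-1}^{2}\,ds$ (Cauchy--Schwarz on the integral remainder), Poincaré, and Young's inequality, giving $\tfrac{c_{1}}{10}\|\nabla e_{\boldsymbol{w}}^{n+1}\|^{2}+C\tau\int_{t^{n}}^{t^{n+1}}\|\boldsymbol{w}_{tt}(s)\|_{-1}^{2}\,ds$. For the coupling term I would use $(\nabla\times e_{\boldsymbol{u}}^{n+1},e_{\boldsymbol{w}}^{n+1})\le\|\nabla\times e_{\boldsymbol{u}}^{n+1}\|\,\|e_{\boldsymbol{w}}^{n+1}\|\le\|\nabla e_{\boldsymbol{u}}^{n+1}\|\,\|e_{\boldsymbol{w}}^{n+1}\|$ by \eqref{eq:norm curl}, then Young's inequality with the weight chosen so that the outcome is exactly $\tfrac{\nu_{r}}{2}\|\nabla e_{\boldsymbol{u}}^{n+1}\|^{2}+2\nu_{r}\|e_{\boldsymbol{w}}^{n+1}\|^{2}$, the second piece being moved to the left. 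The surviving $\tfrac{\nu_{r}}{2}\|\nabla e_{\boldsymbol{u}}^{n+1}\|^{2}$ on the right of \eqref{eq:errorw} is harmless: Lemma~\ref{lem: error_estimate_u} carries a matching $\|\nabla e_{\boldsymbol{u}}^{n+1}\|^{2}$ with a strictly positive coefficient, and the two lemmas are summed in the proof of Theorem~\ref{thm: error_estimate_uwq}.

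The substantive part is the nonlinear defect. Writing $q^{n+1}\exp(t^{n+1}/T)=1+\exp(t^{n+1}/T)e_{q}^{n+1}$, I would split it into the ``SAV part'' $-\jmath\exp(t^{n+1}/T)e_{q}^{n+1}\bigl(\boldsymbol{u}^{n}\cdot\nabla\boldsymbol{w}^{n},e_{\boldsymbol{w}}^{n+1}\bigr)$, which is \emph{left intact} since it is exactly the term kept on the right of \eqref{eq:errorw} (it will later cancel against the corresponding term arising in the analysis of the $e_{q}$-equation, just as the analogous terms cancel in the energy-stability proof of Theorem~\ref{thm_energy stability}), and the ``consistency part'' $\jmath\bigl(\boldsymbol{u}(t^{n+1})\cdot\nabla\boldsymbol{w}(t^{n+1})-\boldsymbol{u}^{n}\cdot\nabla\boldsymbol{w}^{n},e_{\boldsymbol{w}}^{n+1}\bigr)$. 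For the consistency part I would insert $\boldsymbol{u}^{n}=\boldsymbol{u}(t^{n})+e_{\boldsymbol{u}}^{n}$, $\boldsymbol{w}^{n}=\boldsymbol{w}(t^{n})+e_{\boldsymbol{w}}^{n}$, and $\boldsymbol{w}(t^{n+1})-\boldsymbol{w}(t^{n})=\int_{t^{n}}^{t^{n+1}}\boldsymbol{w}_{t}(s)\,ds$, which breaks it into: a term with an $\boldsymbol{H}^{2}$-factor $\boldsymbol{w}(t^{n+1})$ (resp.\ $\boldsymbol{w}(t^{n})$), controlled through \eqref{eq:e_estimate for trilinear form}; a purely temporal term in which $\boldsymbol{w}$ is differentiated in time, also controlled through \eqref{eq:e_estimate for trilinear form} after $\|\int_{t^{n}}^{t^{n+1}}(\cdot)\,ds\|^{2}\le\tau\int_{t^{n}}^{t^{n+1}}\|(\cdot)\|^{2}\,ds$; the term $b(\boldsymbol{u}(t^{n}),e_{\boldsymbol{w}}^{n},e_{\boldsymbol{w}}^{n+1})$, controlled through \eqref{eq:e_estimate for trilinear form1} since $\boldsymbol{u}(t^{n})$ is divergence-free and in $\boldsymbol{H}^{2}$; and the genuinely quadratic error term $b(e_{\boldsymbol{u}}^{n},e_{\boldsymbol{w}}^{n},e_{\boldsymbol{w}}^{n+1})$. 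Each of the first three, after Cauchy--Schwarz and Young, yields a $\tfrac{c_{1}}{10}\|\nabla e_{\boldsymbol{w}}^{n+1}\|^{2}$ plus one of the right-hand terms of \eqref{eq:errorw}, using $\|\boldsymbol{u}^{n}\|$, $\|\boldsymbol{u}(t^{n})\|_{2}$, $\|\boldsymbol{w}(t^{n+1})\|_{2}$ as bounded coefficients via the stability bound \eqref{eq:u_uniform} and the $L^{\infty}(0,T;\boldsymbol{H}^{2})$-regularity.

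The main obstacle is precisely the quadratic term $b(e_{\boldsymbol{u}}^{n},e_{\boldsymbol{w}}^{n},e_{\boldsymbol{w}}^{n+1})$, and this is where $d=2$ is essential. Since $\boldsymbol{u}^{n}$ and $\boldsymbol{u}(t^{n})$ are both divergence-free, $e_{\boldsymbol{u}}^{n}\in\boldsymbol{V}$, so I can apply the two-dimensional estimate \eqref{eq:e_estimate for trilinear form2d} to obtain $b(e_{\boldsymbol{u}}^{n},e_{\boldsymbol{w}}^{n},e_{\boldsymbol{w}}^{n+1})\le C\|\nabla e_{\boldsymbol{u}}^{n}\|^{1/2}\|e_{\boldsymbol{u}}^{n}\|^{1/2}\|\nabla e_{\boldsymbol{w}}^{n}\|^{1/2}\|e_{\boldsymbol{w}}^{n}\|^{1/2}\|\nabla e_{\boldsymbol{w}}^{n+1}\|$; one Young's inequality peels off $\tfrac{c_{1}}{10}\|\nabla e_{\boldsymbol{w}}^{n+1}\|^{2}$, and a second Young's inequality splits the remaining $C\|\nabla e_{\boldsymbol{u}}^{n}\|\,\|e_{\boldsymbol{u}}^{n}\|\,\|\nabla e_{\boldsymbol{w}}^{n}\|\,\|e_{\boldsymbol{w}}^{n}\|$ as $C\|\nabla e_{\boldsymbol{u}}^{n}\|^{2}\|e_{\boldsymbol{u}}^{n}\|^{2}+C\|\nabla e_{\boldsymbol{w}}^{n}\|^{2}\|e_{\boldsymbol{w}}^{n}\|^{2}$, which are exactly the $\|\nabla e_{\boldsymbol{u}}^{n}\|^{2}\|e_{\boldsymbol{u}}^{n}\|^{2}$ and $\|\nabla e_{\boldsymbol{w}}^{n}\|^{2}\|e_{\boldsymbol{w}}^{n}\|^{2}$ contributions on the right of \eqref{eq:errorw} (the ones that later feed the discrete Gronwall Lemma~\ref{lem: gronwall2}). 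Collecting the (at most five) $\tfrac{c_{1}}{10}$-pieces and the $\nu_{r}$-piece against the left side yields \eqref{eq:errorw}; apart from this term everything is the same bookkeeping as in Lemma~\ref{lem: error_estimate_u}, with $\boldsymbol{w}$ in the role of the advected field.
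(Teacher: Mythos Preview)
Your proposal is correct and follows essentially the same route as the paper's own proof: test \eqref{eq:SAVerrow} with $e_{\boldsymbol{w}}^{n+1}$, handle the truncation term via the $H^{-1}$ bound, split the coupling term exactly as $\tfrac{\nu_{r}}{2}\|\nabla e_{\boldsymbol{u}}^{n+1}\|^{2}+2\nu_{r}\|e_{\boldsymbol{w}}^{n+1}\|^{2}$, keep the SAV term $-\jmath\exp(t^{n+1}/T)e_{q}^{n+1}(\boldsymbol{u}^{n}\cdot\nabla\boldsymbol{w}^{n},e_{\boldsymbol{w}}^{n+1})$ intact, and break the remaining consistency part into the same four pieces, with the quadratic term $b(e_{\boldsymbol{u}}^{n},e_{\boldsymbol{w}}^{n},e_{\boldsymbol{w}}^{n+1})$ controlled through the two-dimensional estimate \eqref{eq:e_estimate for trilinear form2d}. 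The only cosmetic differences are that the paper absorbs three $\tfrac{c_{1}}{6}$ pieces rather than five $\tfrac{c_{1}}{10}$ pieces, and organizes the consistency decomposition as $(\boldsymbol{u}(t^{n+1})-\boldsymbol{u}^{n})\cdot\nabla\boldsymbol{w}(t^{n+1})+\boldsymbol{u}^{n}\cdot\nabla(\boldsymbol{w}(t^{n+1})-\boldsymbol{w}^{n})$ before inserting the error splittings; the resulting terms and estimates are the same as yours.
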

\begin{proof}
Taking the inner product of \eqref{eq:SAVerrow} with $e_{\boldsymbol{w}}^{n+1}$,
we obtain 
\begin{align}
 & \begin{aligned}\jmath\frac{\left\Vert e_{\boldsymbol{w}}^{n+1}\right\Vert ^{2}-\left\Vert e_{\boldsymbol{w}}^{n}\right\Vert ^{2}+\left\Vert e_{\boldsymbol{w}}^{n+1}-e_{\boldsymbol{w}}^{n}\right\Vert ^{2}}{2\tau}\end{aligned}
+c_{1}\left\Vert \nabla e_{\boldsymbol{w}}^{n+1}\right\Vert ^{2}+c_{2}\left\Vert \nabla\cdot e_{\boldsymbol{w}}^{n+1}\right\Vert ^{2}+4\nu_{r}\left\Vert e_{\boldsymbol{w}}^{n+1}\right\Vert ^{2}\nonumber \\
 & =(R_{\boldsymbol{w}}^{n+1},e_{\boldsymbol{w}}^{n+1})+\jmath\left((\boldsymbol{u}(t^{n+1})\cdot\nabla)\boldsymbol{w}(t^{n+1})-q^{n+1}\exp\left(\frac{t^{n+1}}{T}\right)(\boldsymbol{u}^{n}\cdot\nabla)\boldsymbol{w}^{n},e_{\boldsymbol{w}}^{n+1}\right)\nonumber \\
 & \quad+\left(2\nu_{r}\nabla\times e_{\boldsymbol{u}}^{n+1},e_{\boldsymbol{w}}^{n+1}\right).\label{eq:e_error_w_inner_product}
\end{align}
For the last term on the right hand side of \eqref{eq:e_error_w_inner_product},
we have 
\begin{equation}
\left(R_{\boldsymbol{w}}^{n+1},e_{\boldsymbol{w}}^{n+1}\right)\leq\frac{c_{1}}{6}\left\Vert \nabla e_{\boldsymbol{w}}^{n+1}\right\Vert ^{2}+C\tau\int_{t^{n}}^{t^{n+1}}\left\Vert \boldsymbol{w}_{tt}(s)\right\Vert _{-1}^{2}ds.\label{eq:e_error_inner_Rw}
\end{equation}
The second term on the right hand side of \eqref{eq:e_error_w_inner_product}
can be estimated as follows by using the similar procedure in \eqref{eq:e_error_u_nonlinear_convective}.
Thus, we recast is as 
\begin{align}
 & \jmath\left((\boldsymbol{u}(t^{n+1})\cdot\nabla)\boldsymbol{w}(t^{n+1})-q^{n+1}\exp\left(\frac{t^{n+1}}{T}\right)(\boldsymbol{u}^{n}\cdot\nabla)\boldsymbol{w}^{n},e_{\boldsymbol{w}}^{n+1}\right)\nonumber \\
 & =\jmath\left((\boldsymbol{u}(t^{n+1})-\boldsymbol{u}^{n})\cdot\nabla\boldsymbol{w}(t^{n+1}),e_{\boldsymbol{w}}^{n+1}\right)+\jmath\left(\boldsymbol{u}^{n}\cdot\nabla(\boldsymbol{w}(t^{n+1})-\boldsymbol{w}^{n}),e_{\boldsymbol{w}}^{n+1}\right)\nonumber \\
 & \quad-\jmath\exp\left(\frac{t^{n+1}}{T}\right)e_{q}^{n+1}\left(\boldsymbol{u}^{n}\cdot\nabla\boldsymbol{w}^{n},e_{\boldsymbol{w}}^{n+1}\right).\label{eq:e_error_w_nonlinear_convective}
\end{align}
For the first term on the right hand side of \eqref{eq:e_error_w_nonlinear_convective},
similar to \eqref{eq:e_error_u_nonlinear_convective1}, we have
\begin{align}
&\jmath\left((\boldsymbol{u}(t^{n+1})-\boldsymbol{u}^{n})\cdot\nabla\boldsymbol{w}(t^{n+1}),e_{\boldsymbol{w}}^{n+1}\right) \nonumber \\
& \leq\jmath C_{b,1}\left\Vert \boldsymbol{u}(t^{n+1})-\boldsymbol{u}^{n}\right\Vert \left\Vert \boldsymbol{w}(t^{n+1})\right\Vert _{2}\left\Vert \nabla e_{\boldsymbol{w}}^{n+1}\right\Vert \nonumber \\
 & \le\frac{c_{1}}{6}\left\Vert \nabla e_{\boldsymbol{w}}^{n+1}\right\Vert ^{2}+C\left\Vert \boldsymbol{u}(t^{n+1})-\boldsymbol{u}^{n}\right\Vert ^{2}\left\Vert \boldsymbol{w}(t^{n+1})\right\Vert _{2}^{2}\nonumber \\
 & \le\frac{c_{1}}{6}\left\Vert \nabla e_{\boldsymbol{w}}^{n+1}\right\Vert ^{2}+C\left\Vert \boldsymbol{w}(t^{n+1})\right\Vert _{2}^{2}\left\Vert e_{\boldsymbol{u}}^{n}\right\Vert ^{2}+C\tau\left\Vert \boldsymbol{w}(t^{n+1})\right\Vert _{2}^{2}\int_{t^{n}}^{t^{n+1}}\left\Vert \boldsymbol{w}_{t}(s)\right\Vert ^{2}ds.\label{eq:e_error_w_nonlinear_convective1}
\end{align}
For the second term on the right hand side of \eqref{eq:e_error_w_nonlinear_convective},
similar to \eqref{eq:e_error_u_nonlinear_convective2}, we get
\begin{align}
&\jmath\left(\boldsymbol{u}^{n}\cdot\nabla(\boldsymbol{w}(t^{n+1})-\boldsymbol{w}^{n}),e_{\boldsymbol{w}}^{n+1}\right)\nonumber \\
 &\le\jmath\left(\boldsymbol{u}^{n}\cdot\nabla(\boldsymbol{w}(t^{n+1})-\boldsymbol{w}(t^{n})),e_{\boldsymbol{w}}^{n+1}\right)-\jmath\left(e_{\boldsymbol{u}}^{n}\cdot\nabla e_{\boldsymbol{w}}^{n},e_{\boldsymbol{w}}^{n+1}\right)-\jmath\left(\boldsymbol{u}(t^{n})\cdot\nabla e_{\boldsymbol{w}}^{n},e_{\boldsymbol{w}}^{n+1}\right)\nonumber \\
 & \le\jmath C_{b,1}\left\Vert \boldsymbol{u}^{n}\right\Vert \left\Vert \boldsymbol{w}(t^{n+1})-\boldsymbol{w}(t^{n})\right\Vert _{2}\left\Vert \nabla e_{\boldsymbol{w}}^{n+1}\right\Vert +\jmath C_{b,6}\left\Vert \nabla e_{\boldsymbol{u}}^{n}\right\Vert ^{1/2}\left\Vert e_{\boldsymbol{u}}^{n}\right\Vert ^{1/2}\left\Vert \nabla e_{\boldsymbol{w}}^{n}\right\Vert ^{1/2}\left\Vert e_{\boldsymbol{w}}^{n}\right\Vert ^{1/2}\left\Vert \nabla e_{\boldsymbol{u}}^{n+1}\right\Vert \nonumber \\
 & \quad+\jmath C_{b,2}\left\Vert \boldsymbol{u}(t^{n})\right\Vert _{2}\left\Vert e_{\boldsymbol{w}}^{n}\right\Vert \left\Vert \nabla e_{\boldsymbol{w}}^{n+1}\right\Vert \nonumber \\
 & \le\jmath C_{b,1}\left\Vert \boldsymbol{u}^{n}\right\Vert \left\Vert \int_{t^{n}}^{t^{n+1}}\boldsymbol{w}_{t}(s)ds\right\Vert _{2}\left\Vert \nabla e_{\boldsymbol{w}}^{n+1}\right\Vert +\jmath C_{b,6}\left\Vert \nabla e_{\boldsymbol{u}}^{n}\right\Vert ^{1/2}\left\Vert e_{\boldsymbol{u}}^{n}\right\Vert ^{1/2}\left\Vert \nabla e_{\boldsymbol{w}}^{n}\right\Vert ^{1/2}\left\Vert e_{\boldsymbol{w}}^{n}\right\Vert ^{1/2}\left\Vert \nabla e_{\boldsymbol{w}}^{n+1}\right\Vert \nonumber \\
 & \quad+\jmath C_{b,2}\left\Vert \boldsymbol{u}(t^{n})\right\Vert _{2}\left\Vert e_{\boldsymbol{w}}^{n}\right\Vert \left\Vert \nabla e_{\boldsymbol{w}}^{n+1}\right\Vert \nonumber \\
 & \le\frac{c_{1}}{6}\left\Vert \nabla e_{\boldsymbol{w}}^{n+1}\right\Vert ^{2}+C\left(\left\Vert \boldsymbol{u}(t^{n})\right\Vert _{2}^{2}+\left\Vert \nabla e_{\boldsymbol{w}}^{n}\right\Vert ^{2}\right)\left\Vert e_{\boldsymbol{w}}^{n}\right\Vert ^{2}+C\left\Vert \nabla e_{\boldsymbol{u}}^{n}\right\Vert ^{2}\left\Vert e_{\boldsymbol{u}}^{n}\right\Vert ^{2}+C\tau\left\Vert \boldsymbol{u}^{n}\right\Vert \int_{t^{n}}^{t^{n+1}}\left\Vert \boldsymbol{w}_{t}(s)\right\Vert _{2}^{2}ds.\label{eq:e_error_w_nonlinear_convective2}
\end{align}
For the last term on the right hand side of \eqref{eq:e_error_w_inner_product},
we obtain
\begin{equation}
\left(2\nu_{r}\nabla\times e_{\boldsymbol{u}}^{n+1},e_{\boldsymbol{w}}^{n+1}\right)\le2\nu_{r}\left\Vert \nabla \times e_{\boldsymbol{u}}^{n+1}\right\Vert \left\Vert e_{\boldsymbol{w}}^{n+1}\right\Vert \le\frac{\nu_{r}}{2}\left\Vert \nabla e_{\boldsymbol{u}}^{n+1}\right\Vert ^{2}+2\nu_{r}\left\Vert e_{\boldsymbol{w}}^{n+1}\right\Vert ^{2}.\label{eq:e_error_w_couple_force1}
\end{equation}
Combining \eqref{eq:e_error_w_inner_product} with \eqref{eq:e_error_inner_Rw}-\eqref{eq:e_error_w_couple_force1}
leads to the desired result. 
\end{proof}
\medskip{}
Now we turn to estimate the error for the auxiliary variable $q$.
\begin{lem}
\label{lem: error_estimate_q} Under the assumptions of Theorem \ref{thm: error_estimate_uwq},
we have 
\begin{align}
 & \frac{|e_{q}^{n+1}|^{2}-|e_{q}^{n}|^{2}+|e_{q}^{n+1}-e_{q}^{n}|^{2}}{2\tau}+\frac{1}{2T}|e_{q}^{n+1}|^{2}\nonumber \\
 & \le\exp\left(\frac{t^{n+1}}{T}\right)e_{q}^{n+1}\left(\boldsymbol{u}^{n}\cdot\nabla\boldsymbol{u}^{n},e_{\boldsymbol{u}}^{n+1}\right)+\jmath\exp\left(\frac{t^{n+1}}{T}\right)e_{q}^{n+1}\left(\boldsymbol{u}^{n}\cdot\nabla\boldsymbol{w}^{n},e_{\boldsymbol{w}}^{n+1}\right)\nonumber \\
 & \quad+\frac{1}{4k_{2}}\left\Vert \nabla\boldsymbol{u}^{n}\right\Vert ^{2}|e_{q}^{n+1}|^{2}+C\left\Vert e_{\boldsymbol{w}}^{n}\right\Vert ^{2}\left\Vert \boldsymbol{w}(t^{n+1})\right\Vert _{2}^{2}\nonumber \\
 & \quad+C\left(\left\Vert \boldsymbol{u}(t^{n+1})\right\Vert _{2}^{2}+\left\Vert \nabla\boldsymbol{u}(t^{n+1})\right\Vert ^{2}\left\Vert \boldsymbol{u}(t^{n+1})\right\Vert _{2}^{2}+\left\Vert \nabla\boldsymbol{w}(t^{n+1})\right\Vert ^{2}\left\Vert \boldsymbol{w}(t^{n+1})\right\Vert _{2}^{2}\right)\left\Vert e_{\boldsymbol{u}}^{n}\right\Vert ^{2}\nonumber \\
 & \quad+C\tau\int_{t^{n}}^{t^{n+1}}\left|q_{tt}(s)\right|^{2}ds+C\tau\left\Vert \boldsymbol{w}(t^{n+1})\right\Vert _{2}^{2}\int_{t^{n}}^{t^{n+1}}\left\Vert \boldsymbol{w}_{t}(s)\right\Vert ^{2}ds\nonumber \\
 & \quad+C\tau\left(\left\Vert \boldsymbol{u}(t^{n+1})\right\Vert _{2}^{2}+\left\Vert \nabla\boldsymbol{u}(t^{n+1})\right\Vert ^{2}\left\Vert \boldsymbol{u}(t^{n+1})\right\Vert _{2}^{2}+\left\Vert \nabla\boldsymbol{w}(t^{n+1})\right\Vert ^{2}\left\Vert \boldsymbol{w}(t^{n+1})\right\Vert _{2}^{2}\right)\int_{t^{n}}^{t^{n+1}}\left\Vert \boldsymbol{u}_{t}(s)\right\Vert ^{2}ds,\ \forall\ 0\leq n\leq N-1,\label{eq:errorq}
\end{align}
where $k_2$ is defined by \eqref{eq:gradu_uniform}.
\end{lem}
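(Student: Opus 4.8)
The plan is to repeat, at the level of the error equation~\eqref{eq:SAVerrorq}, the derivation of the discrete energy law in Theorem~\ref{thm_energy stability}. First I multiply~\eqref{eq:SAVerrorq} by $e_{q}^{n+1}$ and apply the identity~\eqref{eq:e_identity_Euler} to $\delta_{t}e_{q}^{n+1}\,e_{q}^{n+1}$, which produces the left-hand side $\tfrac{|e_{q}^{n+1}|^{2}-|e_{q}^{n}|^{2}+|e_{q}^{n+1}-e_{q}^{n}|^{2}}{2\tau}+\tfrac1T|e_{q}^{n+1}|^{2}$. On the right-hand side the exact-solution nonlinear contributions $\big(\boldsymbol{u}(t^{n+1})\cdot\nabla\boldsymbol{u}(t^{n+1}),\boldsymbol{u}(t^{n+1})\big)$ and $\big(\boldsymbol{u}(t^{n+1})\cdot\nabla\boldsymbol{w}(t^{n+1}),\boldsymbol{w}(t^{n+1})\big)$ vanish by the skew-symmetry~\eqref{eq:e_skew-symmetric2}, so only $R_{q}^{n+1}e_{q}^{n+1}$ survives, together with $\exp(t^{n+1}/T)\,e_{q}^{n+1}\big[(\boldsymbol{u}^{n}\cdot\nabla\boldsymbol{u}^{n},\boldsymbol{u}^{n+1})+\jmath(\boldsymbol{u}^{n}\cdot\nabla\boldsymbol{w}^{n},\boldsymbol{w}^{n+1})\big]$. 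I keep $\tfrac1{2T}|e_{q}^{n+1}|^{2}$ on the left (this is the $\tfrac1{2T}|e_{q}^{n+1}|^{2}$ in~\eqref{eq:errorq}) and use the remaining $\tfrac1{2T}|e_{q}^{n+1}|^{2}$ as a reservoir. The truncation term is immediate: from $R_{q}^{n+1}=\tfrac1\tau\int_{t^{n}}^{t^{n+1}}(s-t^{n})q_{tt}(s)\,ds$ and Cauchy--Schwarz one gets $|R_{q}^{n+1}|^{2}\le C\tau\int_{t^{n}}^{t^{n+1}}|q_{tt}(s)|^{2}\,ds$, hence $R_{q}^{n+1}e_{q}^{n+1}\le\tfrac1{4T}|e_{q}^{n+1}|^{2}+C\tau\int_{t^{n}}^{t^{n+1}}|q_{tt}(s)|^{2}\,ds$ by Young's inequality.

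The substance is in the two trilinear terms. In each I split the test field, $\boldsymbol{u}^{n+1}=e_{\boldsymbol{u}}^{n+1}+\boldsymbol{u}(t^{n+1})$ and $\boldsymbol{w}^{n+1}=e_{\boldsymbol{w}}^{n+1}+\boldsymbol{w}(t^{n+1})$. The pieces $\exp(t^{n+1}/T)e_{q}^{n+1}(\boldsymbol{u}^{n}\cdot\nabla\boldsymbol{u}^{n},e_{\boldsymbol{u}}^{n+1})$ and $\jmath\exp(t^{n+1}/T)e_{q}^{n+1}(\boldsymbol{u}^{n}\cdot\nabla\boldsymbol{w}^{n},e_{\boldsymbol{w}}^{n+1})$ are left untouched: these are the first two terms of~\eqref{eq:errorq}, and they are designed to cancel exactly against the matching terms already isolated on the right-hand sides of Lemmas~\ref{lem: error_estimate_u} and~\ref{lem: error_estimate_w} once the three estimates are added --- the discrete version of the ``zero-energy-contribution'' property. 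For the residual pieces $\exp(t^{n+1}/T)e_{q}^{n+1}(\boldsymbol{u}^{n}\cdot\nabla\boldsymbol{u}^{n},\boldsymbol{u}(t^{n+1}))$ and $\jmath\exp(t^{n+1}/T)e_{q}^{n+1}(\boldsymbol{u}^{n}\cdot\nabla\boldsymbol{w}^{n},\boldsymbol{w}(t^{n+1}))$ I use that $b(\boldsymbol{u}^{n},\cdot,\cdot)$ annihilates repeated arguments (since $\nabla\cdot\boldsymbol{u}^{n}=0$) to rewrite $b(\boldsymbol{u}^{n},\boldsymbol{u}^{n},\boldsymbol{u}(t^{n+1}))$ and $b(\boldsymbol{u}^{n},\boldsymbol{w}^{n},\boldsymbol{w}(t^{n+1}))$ through the differences $\boldsymbol{u}^{n}-\boldsymbol{u}(t^{n+1})=e_{\boldsymbol{u}}^{n}-\int_{t^{n}}^{t^{n+1}}\boldsymbol{u}_{t}(s)\,ds$, $\boldsymbol{u}^{n}-\boldsymbol{u}(t^{n})=e_{\boldsymbol{u}}^{n}$ and their $\boldsymbol{w}$-analogues, so that every resulting trilinear form carries at least one small factor among $e_{\boldsymbol{u}}^{n}$, $e_{\boldsymbol{w}}^{n}$, $\int_{t^{n}}^{t^{n+1}}\boldsymbol{u}_{t}$, $\int_{t^{n}}^{t^{n+1}}\boldsymbol{w}_{t}$. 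I then apply the trilinear bounds~\eqref{eq:e_estimate for trilinear form}--\eqref{eq:e_estimate for trilinear form4}, choosing the versions that keep $\left\Vert\nabla\boldsymbol{u}^{n}\right\Vert$ on the $\boldsymbol{u}^{n}$ factor and place $H^{2}$-norms on $\boldsymbol{u}(t^{n+1})$, $\boldsymbol{u}(t^{n})$, $\boldsymbol{w}(t^{n+1})$, so as never to create an uncontrolled $\left\Vert\nabla e_{\boldsymbol{u}}^{n+1}\right\Vert$ or $\left\Vert\nabla e_{\boldsymbol{w}}^{n}\right\Vert$; I bound $\exp(t^{n+1}/T)\le\exp(1)$ on $[0,T]$ and close with Young's inequality. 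Each occurrence contributes a $\left\Vert\nabla\boldsymbol{u}^{n}\right\Vert^{2}|e_{q}^{n+1}|^{2}$ part, for which I allot a fraction of $\tfrac1{4k_{2}}$ (with $k_{2}$ from~\eqref{eq:gradu_uniform}, so the attendant constant stays $k_{2}$-dependent but finite); the pieces in which $e_{\boldsymbol{u}}^{n}$ sits in the leading slot produce a bare $|e_{q}^{n+1}|^{2}$ absorbed into the reserved $\tfrac1{4T}|e_{q}^{n+1}|^{2}$; and the leftover data terms collapse into $C\big(\left\Vert\boldsymbol{u}(t^{n+1})\right\Vert_{2}^{2}+\left\Vert\nabla\boldsymbol{u}(t^{n+1})\right\Vert^{2}\left\Vert\boldsymbol{u}(t^{n+1})\right\Vert_{2}^{2}+\left\Vert\nabla\boldsymbol{w}(t^{n+1})\right\Vert^{2}\left\Vert\boldsymbol{w}(t^{n+1})\right\Vert_{2}^{2}\big)\left\Vert e_{\boldsymbol{u}}^{n}\right\Vert^{2}+C\left\Vert\boldsymbol{w}(t^{n+1})\right\Vert_{2}^{2}\left\Vert e_{\boldsymbol{w}}^{n}\right\Vert^{2}$ plus the $\tau$-weighted time integrals of $\left\Vert\boldsymbol{u}_{t}\right\Vert^{2}$ and $\left\Vert\boldsymbol{w}_{t}\right\Vert^{2}$ with the same weights. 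Collecting the contributions yields~\eqref{eq:errorq}.

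The delicate point is this decomposition step: one must organize the trilinear splittings so that (i) the $e_{\boldsymbol{u}}^{n+1}$ and $e_{\boldsymbol{w}}^{n+1}$ contributions emerge with exactly the sign and form appearing in Lemmas~\ref{lem: error_estimate_u}--\ref{lem: error_estimate_w}, making the eventual summation a clean cancellation, and (ii) all residuals are controlled using only $\left\Vert\nabla\boldsymbol{u}^{n}\right\Vert|e_{q}^{n+1}|$, $\left\Vert e_{\boldsymbol{u}}^{n}\right\Vert$, $\left\Vert e_{\boldsymbol{w}}^{n}\right\Vert$, the data norms, and $\int\left\Vert\boldsymbol{u}_{t}\right\Vert^{2}$, $\int\left\Vert\boldsymbol{w}_{t}\right\Vert^{2}$ --- never a gradient of an error --- while the accumulated coefficient of $\left\Vert\nabla\boldsymbol{u}^{n}\right\Vert^{2}|e_{q}^{n+1}|^{2}$ is pinned at $\tfrac1{4k_{2}}$, which is what makes this term harmless after summation over $n$ and the discrete Gronwall argument in the proof of Theorem~\ref{thm: error_estimate_uwq}. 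The feature absent from the pure Navier--Stokes SAV analysis is that the scalar $e_{q}^{n+1}$ multiplies an entire trilinear form, so the only way to turn it into a $|e_{q}^{n+1}|^{2}$ term via Young's inequality is to pair it with $\left\Vert\nabla\boldsymbol{u}^{n}\right\Vert^{2}$, and this is the origin of the $k_{2}$-dependence in the coefficient.
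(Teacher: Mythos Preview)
Your proposal is correct and follows essentially the same approach as the paper: multiply \eqref{eq:SAVerrorq} by $e_{q}^{n+1}$, isolate the two pieces $\exp(t^{n+1}/T)e_{q}^{n+1}(\boldsymbol{u}^{n}\cdot\nabla\boldsymbol{u}^{n},e_{\boldsymbol{u}}^{n+1})$ and its $\boldsymbol{w}$-analogue for later cancellation, and estimate the remaining trilinear residuals via \eqref{eq:e_estimate for trilinear form2}/\eqref{eq:e_estimate for trilinear form3} and Young's inequality with the $\tfrac{1}{4k_{2}}$ split. One small remark: your early use of skew-symmetry in fact shows that the paper's pieces $((\boldsymbol{u}^{n}-\boldsymbol{u}(t^{n+1}))\cdot\nabla\boldsymbol{u}(t^{n+1}),\boldsymbol{u}(t^{n+1}))$ and $((\boldsymbol{u}^{n}-\boldsymbol{u}(t^{n+1}))\cdot\nabla\boldsymbol{w}(t^{n+1}),\boldsymbol{w}(t^{n+1}))$ vanish (since $\boldsymbol{u}^{n}-\boldsymbol{u}(t^{n+1})\in\boldsymbol{V}$), so no ``$e_{\boldsymbol{u}}^{n}$ in the leading slot'' terms actually arise and the $\left\Vert\nabla\boldsymbol{u}(t^{n+1})\right\Vert^{2}\left\Vert\boldsymbol{u}(t^{n+1})\right\Vert_{2}^{2}$, $\left\Vert\nabla\boldsymbol{w}(t^{n+1})\right\Vert^{2}\left\Vert\boldsymbol{w}(t^{n+1})\right\Vert_{2}^{2}$ contributions in \eqref{eq:errorq} could be dropped---your route is slightly cleaner than the paper's here, though both reach the stated bound.
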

\begin{proof}
Multiplying both sides of \eqref{eq:SAVerrorq} by $e_{q}^{n+1}$
gives 
\begin{align}
 & \frac{|e_{q}^{n+1}|^{2}-|e_{q}^{n}|^{2}+|e_{q}^{n+1}-e_{q}^{n}|^{2}}{2\tau}+\frac{1}{T}|e_{q}^{n+1}|^{2}\nonumber \\
 & =R_{q}^{n+1}e_{q}^{n+1}+\exp\left(\frac{t^{n+1}}{T}\right)e_{q}^{n+1}((\boldsymbol{u}^{n}\cdot\nabla\boldsymbol{u}^{n},\boldsymbol{u}^{n+1})-(\boldsymbol{u}(t^{n+1})\cdot\nabla\boldsymbol{u}(t^{n+1}),\boldsymbol{u}(t^{n+1})))\nonumber \\
 & \quad+\jmath\exp\left(\frac{t^{n+1}}{T}\right)e_{q}^{n+1}((\boldsymbol{u}^{n}\cdot\nabla\boldsymbol{w}^{n},\boldsymbol{w}^{n+1})-(\boldsymbol{u}(t^{n+1})\cdot\nabla\boldsymbol{w}(t^{n+1}),\boldsymbol{w}(t^{n+1}))).\label{eq:e_error_q_inner}
\end{align}
The first term on the right hand side of \eqref{eq:e_error_q_inner}
can be estimated as follows
\begin{equation}
\begin{aligned}R\end{aligned}
_{q}^{n+1}e_{q}^{n+1}\leq\frac{1}{6T}|e_{q}^{n+1}|^{2}+C\tau\int_{t^{n}}^{t^{n+1}}\left|q_{tt}(s)\right|^{2}ds.\label{eq:e_error_q_nonlinear4}
\end{equation}
The second term on the right hand side of \eqref{eq:e_error_q_inner}
can be recast as 
\begin{align}
 & \exp\left(\frac{t^{n+1}}{T}\right)e_{q}^{n+1}\left((\boldsymbol{u}^{n}\cdot\nabla\boldsymbol{u}^{n},\boldsymbol{u}^{n+1})-(\boldsymbol{u}(t^{n+1})\cdot\nabla\boldsymbol{u}(t^{n+1}),\boldsymbol{u}(t^{n+1}))\right)\nonumber \\
 & =\exp\left(\frac{t^{n+1}}{T}\right)e_{q}^{n+1}\left(\boldsymbol{u}^{n}\cdot\nabla\boldsymbol{u}^{n},e_{\boldsymbol{u}}^{n+1}\right)+\exp\left(\frac{t^{n+1}}{T}\right)e_{q}^{n+1}\left(\boldsymbol{u}^{n}\cdot\nabla(\boldsymbol{u}^{n}-\boldsymbol{u}(t^{n+1})),\boldsymbol{u}(t^{n+1})\right)\nonumber \\
 & \quad+\exp\left(\frac{t^{n+1}}{T}\right)e_{q}^{n+1}\left((\boldsymbol{u}^{n}-\boldsymbol{u}(t^{n+1}))\cdot\nabla\boldsymbol{u}(t^{n+1}),\boldsymbol{u}(t^{n+1})\right).\label{eq:e_error_q_nonlinear1}
\end{align}
Using \eqref{eq:e_estimate for trilinear form2} and \eqref{eq:gradu_uniform},
the second term on the right hand side of \eqref{eq:e_error_q_nonlinear1}
is bounded by 
\begin{align}
 & \exp\left(\frac{t^{n+1}}{T}\right)e_{q}^{n+1}\left(\boldsymbol{u}^{n}\cdot\nabla(\boldsymbol{u}^{n}-\boldsymbol{u}(t^{n+1})),\boldsymbol{u}(t^{n+1})\right)\nonumber \\
 & \le\exp(1)C_{b,3}|e_{q}^{n+1}|\left\Vert \nabla\boldsymbol{u}^{n}\right\Vert \left\Vert \boldsymbol{u}^{n}-\boldsymbol{u}(t^{n+1})\right\Vert \left\Vert \boldsymbol{u}(t^{n+1})\right\Vert _{2}\nonumber \\
 & \le C\left\Vert \nabla\boldsymbol{u}^{n}\right\Vert \left\Vert \boldsymbol{u}(t^{n+1})-\boldsymbol{u}(t^{n})-e_{\boldsymbol{u}}^{n}\right\Vert \left\Vert \boldsymbol{u}(t^{n+1})\right\Vert _{2}|e_{q}^{n+1}|\nonumber \\
 & \leq\frac{1}{8k_{2}}\left\Vert \nabla\boldsymbol{u}^{n}\right\Vert ^{2}|e_{q}^{n+1}|^{2}+C\left\Vert e_{\boldsymbol{u}}^{n}\right\Vert ^{2}\left\Vert \boldsymbol{u}(t^{n+1})\right\Vert _{2}^{2}+C\tau\left\Vert \boldsymbol{u}(t^{n+1})\right\Vert _{2}^{2}\int_{t^{n}}^{t^{n+1}}\left\Vert \boldsymbol{u}_{t}(s)\right\Vert ^{2}ds,\label{eq:e_error_q_nonlinear2}
\end{align}
where $k_2$ is given by \eqref{eq:gradu_uniform}. In a same manner, the third term on the right hand side of \eqref{eq:e_error_q_nonlinear1}
can be bounded by 
\begin{align}
 & \exp\left(\frac{t^{n+1}}{T}\right)e_{q}^{n+1}\left((\boldsymbol{u}^{n}-\boldsymbol{u}(t^{n+1}))\cdot\nabla\boldsymbol{u}(t^{n+1}),\boldsymbol{u}(t^{n+1})\right)\nonumber \\
 & \leq\exp(1)C_{b,4}\left\Vert \boldsymbol{u}(t^{n+1})-\boldsymbol{u}^{n}\right\Vert \left\Vert \nabla\boldsymbol{u}(t^{n+1})\right\Vert \left\Vert \boldsymbol{u}(t^{n+1})\right\Vert _{2}|e_{q}^{n+1}|\nonumber \\
 & \le C\left\Vert \boldsymbol{u}(t^{n+1})-\boldsymbol{u}(t^{n})-e_{\boldsymbol{u}}^{n}\right\Vert \left\Vert \nabla\boldsymbol{u}(t^{n+1})\right\Vert \left\Vert \boldsymbol{u}(t^{n+1})\right\Vert _{2}|e_{q}^{n+1}|\nonumber \\
 & \leq\frac{1}{6T}|e_{q}^{n+1}|^{2}+C\left\Vert \nabla\boldsymbol{u}(t^{n+1})\right\Vert ^{2}\left\Vert \boldsymbol{u}(t^{n+1})\right\Vert _{2}^{2}\left\Vert e_{\boldsymbol{u}}^{n}\right\Vert ^{2}\nonumber \\
 &\quad+C\tau\left\Vert \nabla\boldsymbol{u}(t^{n+1})\right\Vert ^{2}\left\Vert \boldsymbol{u}(t^{n+1})\right\Vert _{2}^{2}\int_{t^{n}}^{t^{n+1}}\left\Vert \boldsymbol{u}_{t}(s)\right\Vert ^{2}ds.\label{eq:e_error_q_nonlinear3}
\end{align}
Using the similar procedure in \eqref{eq:e_error_q_nonlinear1}, the
last term on the right hand side of \eqref{eq:e_error_q_inner} can
be rewritten as 
\begin{align}
 & \jmath\exp\left(\frac{t^{n+1}}{T}\right)e_{q}^{n+1}\left((\boldsymbol{u}^{n}\cdot\nabla\boldsymbol{w}^{n},\boldsymbol{w}^{n+1})-(\boldsymbol{u}(t^{n+1})\cdot\nabla\boldsymbol{w}(t^{n+1}),\boldsymbol{w}(t^{n+1}))\right)\nonumber \\
 & =\jmath\exp\left(\frac{t^{n+1}}{T}\right)e_{q}^{n+1}\left(\boldsymbol{u}^{n}\cdot\nabla\boldsymbol{w}^{n},e_{\boldsymbol{w}}^{n+1}\right)+\jmath\exp\left(\frac{t^{n+1}}{T}\right)e_{q}^{n+1}\left(\boldsymbol{u}^{n}\cdot\nabla(\boldsymbol{w}^{n}-\boldsymbol{w}(t^{n+1})),\boldsymbol{w}(t^{n+1})\right)\nonumber \\
 & \quad+\jmath\exp\left(\frac{t^{n+1}}{T}\right)e_{q}^{n+1}\left((\boldsymbol{u}^{n}-\boldsymbol{u}(t^{n+1}))\cdot\nabla\boldsymbol{w}(t^{n+1}),\boldsymbol{w}(t^{n+1})\right).\label{eq:e_error_q_nonlinear5}
\end{align}
Similar to \eqref{eq:e_error_q_nonlinear2}, the first term on the
right hand side of \eqref{eq:e_error_q_nonlinear5} can be estimated
by 
\begin{align}
 & \jmath\exp\left(\frac{t^{n+1}}{T}\right)e_{q}^{n+1}\left(\boldsymbol{u}^{n}\cdot\nabla(\boldsymbol{w}^{n}-\boldsymbol{w}(t^{n+1})),\boldsymbol{w}(t^{n+1})\right)\nonumber \\
 & \le\jmath\exp(1)C_{b,3}|e_{q}^{n+1}|\left\Vert \nabla\boldsymbol{u}^{n}\right\Vert \left\Vert \boldsymbol{w}^{n}-\boldsymbol{w}(t^{n+1})\right\Vert \left\Vert \boldsymbol{w}(t^{n+1})\right\Vert _{2}\nonumber \\
 & \le C\left\Vert \nabla\boldsymbol{u}^{n}\right\Vert \left\Vert \boldsymbol{w}(t^{n+1})-\boldsymbol{w}(t^{n})-e_{\boldsymbol{w}}^{n}\right\Vert \left\Vert \boldsymbol{w}(t^{n+1})\right\Vert _{2}|e_{q}^{n+1}|\nonumber \\
 & \leq\frac{1}{8k_{2}}\left\Vert \nabla\boldsymbol{u}^{n}\right\Vert ^{2}|e_{q}^{n+1}|^{2}+C\left\Vert e_{\boldsymbol{w}}^{n}\right\Vert ^{2}\left\Vert \boldsymbol{w}(t^{n+1})\right\Vert _{2}^{2}+C\tau\left\Vert \boldsymbol{w}(t^{n+1})\right\Vert _{2}^{2}\int_{t^{n}}^{t^{n+1}}\left\Vert \boldsymbol{w}_{t}(s)\right\Vert ^{2}ds,\label{eq:e_error_q_nonlinear6}
\end{align}
where $k_2$ is defined by \eqref{eq:gradu_uniform}. For the second term on the right hand side of \eqref{eq:e_error_q_nonlinear5},
similar to \eqref{eq:e_error_q_nonlinear2}, we have 
\begin{align}
 & \jmath\exp\left(\frac{t^{n+1}}{T}\right)e_{q}^{n+1}\left((\boldsymbol{u}^{n}-\boldsymbol{u}(t^{n+1}))\cdot\nabla\boldsymbol{w}(t^{n+1}),\boldsymbol{w}(t^{n+1})\right)\nonumber \\
 & \leq\jmath\exp(1)C_{b,4}\left\Vert \boldsymbol{u}(t^{n+1})-\boldsymbol{u}^{n}\right\Vert \left\Vert \nabla\boldsymbol{w}(t^{n+1})\right\Vert \left\Vert \boldsymbol{w}(t^{n+1})\right\Vert _{2}|e_{q}^{n+1}|\nonumber \\
 & \le C\left\Vert \boldsymbol{u}(t^{n+1})-\boldsymbol{u}(t^{n})-e_{\boldsymbol{u}}^{n}\right\Vert \left\Vert \nabla\boldsymbol{w}(t^{n+1})\right\Vert \left\Vert \boldsymbol{w}(t^{n+1})\right\Vert _{2}|e_{q}^{n+1}|\nonumber \\
 & \leq\frac{1}{6T}|e_{q}^{n+1}|^{2}+C\left\Vert \nabla\boldsymbol{w}(t^{n+1})\right\Vert ^{2}\left\Vert \boldsymbol{w}(t^{n+1})\right\Vert _{2}^{2}\left\Vert e_{\boldsymbol{u}}^{n}\right\Vert ^{2}\nonumber \\
 &\quad+C\tau\left\Vert \nabla\boldsymbol{w}(t^{n+1})\right\Vert ^{2}\left\Vert \boldsymbol{w}(t^{n+1})\right\Vert _{2}^{2}\int_{t^{n}}^{t^{n+1}}\left\Vert \boldsymbol{u}_{t}(s)\right\Vert ^{2}ds.\label{eq:e_error_q_nonlinear7}
\end{align}
Combining \eqref{eq:e_error_q_inner} with \eqref{eq:e_error_q_nonlinear4}-\eqref{eq:e_error_q_nonlinear7}
yields the desired result. 
\end{proof}
Now we are in the position to prove Theorem \ref{thm: error_estimate_uwq}
by using Lemmas \ref{lem: error_estimate_u}-\ref{lem: error_estimate_q}.
\begin{proof}
Summing up \eqref{eq:error_u}, \eqref{eq:errorw}, \eqref{eq:errorq} and using \eqref{eq:u_uniform}, we have 
\begin{align}
 & \frac{\left\Vert e_{\boldsymbol{u}}^{n+1}\right\Vert ^{2}-\left\Vert e_{\boldsymbol{u}}^{n}\right\Vert ^{2}+\left\Vert e_{\boldsymbol{u}}^{n+1}-e_{\boldsymbol{u}}^{n}\right\Vert ^{2}}{2\tau}+\frac{\nu}{2}\left\Vert \nabla e_{\boldsymbol{u}}^{n+1}\right\Vert ^{2}+\jmath\begin{aligned}\frac{\left\Vert e_{\boldsymbol{w}}^{n+1}\right\Vert ^{2}-\left\Vert e_{\boldsymbol{w}}^{n}\right\Vert ^{2}+\left\Vert e_{\boldsymbol{w}}^{n+1}-e_{\boldsymbol{w}}^{n}\right\Vert ^{2}}{2\tau}\end{aligned}
\nonumber \\
 & +\frac{c_{1}}{2}\left\Vert \nabla e_{\boldsymbol{w}}^{n+1}\right\Vert ^{2}+c_{2}\left\Vert \nabla\cdot e_{\boldsymbol{w}}^{n+1}\right\Vert ^{2}+2\nu_{r}\left\Vert e_{\boldsymbol{w}}^{n+1}\right\Vert ^{2}+\frac{|e_{q}^{n+1}|^{2}-|e_{q}^{n}|^{2}}{2\tau}+\frac{|e_{q}^{n+1}-e_{q}^{n}|^{2}}{2\tau}+\frac{1}{2T}|e_{q}^{n+1}|^{2}\nonumber \\
 & \le2\nu_{r}\left\Vert e_{\boldsymbol{w}}^{n}\right\Vert ^{2}+\frac{1}{4k_{2}}\left\Vert \nabla\boldsymbol{u}^{n}\right\Vert ^{2}|e_{q}^{n+1}|^{2}\nonumber \\
 & \quad+C\left(\left\Vert \boldsymbol{u}(t^{n})\right\Vert _{2}^{2}+\left\Vert \boldsymbol{u}(t^{n+1})\right\Vert _{2}^{2}+\left\Vert \boldsymbol{w}(t^{n+1})\right\Vert _{2}^{2}+\left\Vert \nabla e_{\boldsymbol{u}}^{n}\right\Vert ^{2}\right.\nonumber \\
 & \quad\left.+\left\Vert \nabla\boldsymbol{u}(t^{n+1})\right\Vert ^{2}\left\Vert \boldsymbol{u}(t^{n+1})\right\Vert _{2}^{2}+\left\Vert \nabla\boldsymbol{w}(t^{n+1})\right\Vert ^{2}\left\Vert \boldsymbol{w}(t^{n+1})\right\Vert _{2}^{2}\right)\left\Vert e_{\boldsymbol{u}}^{n}\right\Vert ^{2}\nonumber \\
 & \quad+C\left(\left\Vert \boldsymbol{u}(t^{n})\right\Vert _{2}^{2}+\left\Vert \nabla e_{\boldsymbol{w}}^{n}\right\Vert ^{2}+\left\Vert \boldsymbol{w}(t^{n+1})\right\Vert _{2}^{2}\right)\left\Vert e_{\boldsymbol{w}}^{n}\right\Vert ^{2}\nonumber \\
 & \quad+C\tau\int_{t^{n}}^{t^{n+1}}\left\Vert \boldsymbol{u}_{tt}(s)\right\Vert _{-1}^{2}ds+C\tau\int_{t^{n}}^{t^{n+1}}\left\Vert \boldsymbol{w}_{tt}(s)\right\Vert _{-1}^{2}ds+C\tau\left\Vert \boldsymbol{u}^{n}\right\Vert \int_{t^{n}}^{t^{n+1}}\left\Vert \boldsymbol{u}_{t}(s)\right\Vert _{2}^{2}ds\nonumber \\
 & \quad+C\tau\int_{t^{n}}^{t^{n+1}}\left\Vert q_{tt}(s)\right\Vert ^{2}ds+C\tau\left(1+\left\Vert \boldsymbol{w}(t^{n+1})\right\Vert _{2}^{2}\right)\int_{t^{n}}^{t^{n+1}}\left\Vert \boldsymbol{w}_{t}(s)\right\Vert ^{2}ds+C\tau\left\Vert \boldsymbol{u}^{n}\right\Vert \int_{t^{n}}^{t^{n+1}}\left\Vert \boldsymbol{w}_{t}(s)\right\Vert _{2}^{2}ds\nonumber \\
 & \quad+C\tau\left(\left\Vert \boldsymbol{u}(t^{n+1})\right\Vert _{2}^{2}+\left\Vert \nabla\boldsymbol{u}(t^{n+1})\right\Vert ^{2}\left\Vert \boldsymbol{u}(t^{n+1})\right\Vert _{2}^{2}+\left\Vert \nabla\boldsymbol{w}(t^{n+1})\right\Vert ^{2}\left\Vert \boldsymbol{w}(t^{n+1})\right\Vert _{2}^{2}\right)\int_{t^{n}}^{t^{n+1}}\left\Vert \boldsymbol{u}_{t}(s)\right\Vert ^{2}ds.\nonumber \\
 & \le2\nu_{r}\left\Vert e_{\boldsymbol{w}}^{n}\right\Vert ^{2}+\frac{1}{4k_{2}}\left\Vert \nabla\boldsymbol{u}^{n}\right\Vert ^{2}|e_{q}^{n+1}|^{2}+C\left(1+\left\Vert \nabla e_{\boldsymbol{u}}^{n}\right\Vert ^{2}\right)\left\Vert e_{\boldsymbol{u}}^{n}\right\Vert ^{2}+C\tau\left(1+\left\Vert \nabla e_{\boldsymbol{w}}^{n}\right\Vert ^{2}\right)\left\Vert e_{\boldsymbol{w}}^{n}\right\Vert ^{2}\nonumber \\
 & \quad+C\tau\int_{t^{n}}^{t^{n+1}}\left(\left\Vert \boldsymbol{u}_{tt}(s)\right\Vert _{-1}^{2}+\left\Vert \boldsymbol{w}_{tt}(s)\right\Vert _{-1}^{2}+\left\Vert \boldsymbol{u}_{t}(s)\right\Vert _{2}^{2}+\left|q_{tt}(s)\right|^{2}+\left\Vert \boldsymbol{w}_{t}(s)\right\Vert_2 ^{2}\right)ds,\label{eq:e_error_uwq_final1}
\end{align}
We first deduce a bound for $\left|e_{q}^{m^{*}+1}\right|$, where
$m^{*}$ is the time step such that 
\begin{equation}
\left|e_{q}^{m^{*}+1}\right|=\max_{0\leq n\leq N-1}\left|e_{q}^{n+1}\right|.\label{eq:eqmax}
\end{equation}
Multiplying \eqref{eq:e_error_uwq_final1} by $2\tau$, summing up
over $n$ from 0 to $m^{*}$ and using \eqref{eq:eqmax}, we get
\begin{align}
 & \left\Vert e_{\boldsymbol{u}}^{m^{*}+1}\right\Vert ^{2}+\nu\tau\sum_{n=0}^{m^{*}}\left\Vert \nabla e_{\boldsymbol{u}}^{n+1}\right\Vert ^{2}+\left(\jmath+4\nu\tau\right)\left\Vert e_{\boldsymbol{w}}^{m^{*}+1}\right\Vert ^{2}+c_{1}\tau\sum_{n=0}^{m^{*}}\left\Vert \nabla e_{\boldsymbol{w}}^{n+1}\right\Vert ^{2}\nonumber \\
 & +2c_{2}\tau\sum_{n=0}^{m^{*}}\left\Vert \nabla\cdot e_{\boldsymbol{w}}^{n+1}\right\Vert ^{2}+|e_{q}^{m^{*}+1}|^{2}+\frac{\tau}{T}\sum_{n=0}^{m^{*}}|e_{q}^{n+1}|^{2}\nonumber \\
 & \le\frac{\tau}{2k_{2}}\sum_{n=0}^{m^{*}}\left\Vert \nabla\boldsymbol{u}^{n}\right\Vert ^{2}|e_{q}^{m^{*}+1}|^{2}+C\tau\sum_{n=0}^{m^{*}}\left(1+\left\Vert \nabla e_{\boldsymbol{u}}^{n}\right\Vert ^{2}\right)\left\Vert e_{\boldsymbol{u}}^{n}\right\Vert ^{2}+C\tau\sum_{n=0}^{m^{*}}\left(1+\left\Vert \nabla e_{\boldsymbol{w}}^{n}\right\Vert ^{2}\right)\left\Vert e_{\boldsymbol{w}}^{n}\right\Vert ^{2}\nonumber \\
 & \quad+C\tau^{2}\int_{0}^{t^{m^{*}+1}}\left(\left\Vert \boldsymbol{u}_{tt}(s)\right\Vert _{-1}^{2}+\left\Vert \boldsymbol{w}_{tt}(s)\right\Vert _{-1}^{2}+\left\Vert \boldsymbol{u}_{t}(s)\right\Vert _{2}^{2}+\left|q_{tt}(s)\right|^{2}+\left\Vert \boldsymbol{w}_{t}(s)\right\Vert ^{2}\right)ds.\label{eq:e_error_uwq_final2}
\end{align}
Using \eqref{eq:gradu_uniform}, we get 
\begin{align*}
\frac{\tau}{2k_{2}}\sum_{n=0}^{m^{*}}\left\Vert \nabla\boldsymbol{u}^{n}\right\Vert ^{2}|e_{q}^{m^{*}+1}|^{2}  \le\frac{1}{2}\left|e_{q}^{m^{*}+1}\right|^{2},\\
\tau\sum_{n=0}^{m^{*}}\left(1+\left\Vert \nabla e_{\boldsymbol{u}}^{n}\right\Vert ^{2}\right)\le C,\quad \tau\sum_{n=0}^{m^{*}}\left(1+\left\Vert \nabla e_{\boldsymbol{w}}^{n}\right\Vert ^{2}\right)\le C.
\end{align*}
Invoking with the discrete Gronwall inequality in Lemma \ref{lem: gronwall2},
we obtain
\begin{align}
 & \left\Vert e_{\boldsymbol{u}}^{m^{*}+1}\right\Vert ^{2}+\nu\tau\sum_{n=0}^{m^{*}}\left\Vert \nabla e_{\boldsymbol{u}}^{n+1}\right\Vert ^{2}+\left(\jmath+4\nu\tau\right)\left\Vert e_{\boldsymbol{w}}^{m^{*}+1}\right\Vert ^{2}+c_{1}\tau\sum_{n=0}^{m^{*}}\left\Vert \nabla e_{\boldsymbol{w}}^{n+1}\right\Vert ^{2}\nonumber \\
 & +2c_{2}\tau\sum_{n=0}^{m^{*}}\left\Vert \nabla\cdot e_{\boldsymbol{w}}^{n+1}\right\Vert ^{2}+\frac{1}{2}|e_{q}^{m^{*}+1}|^{2}+\frac{\tau}{T}\sum_{n=0}^{m^{*}}|e_{q}^{n+1}|^{2}\nonumber \\
 & \le C\tau^{2}\int_{0}^{t^{m^{*}+1}}\left(\left\Vert \boldsymbol{u}_{tt}(s)\right\Vert _{-1}^{2}+\left\Vert \boldsymbol{w}_{tt}(s)\right\Vert _{-1}^{2}+\left\Vert \boldsymbol{u}_{t}(s)\right\Vert _{2}^{2}+\left|q_{tt}(s)\right|^{2}+\left\Vert \boldsymbol{w}_{t}(s)\right\Vert ^{2}\right)ds.\label{eq:e_error_uwq_final3}
\end{align}
Now turning to \eqref{eq:e_error_uwq_final1}, multiply it by $2\tau$
and sum up over $n$ from 0 to $m$, and using \eqref{eq:eqmax}, we derive
\begin{align}
 & \left\Vert e_{\boldsymbol{u}}^{m+1}\right\Vert ^{2}+\nu\tau\sum_{n=0}^{m}\left\Vert \nabla e_{\boldsymbol{u}}^{n+1}\right\Vert ^{2}+\left(\jmath+4\nu\tau\right)\left\Vert e_{\boldsymbol{w}}^{m+1}\right\Vert ^{2}+c_{1}\tau\sum_{n=0}^{m}\left\Vert \nabla e_{\boldsymbol{w}}^{n+1}\right\Vert ^{2}\nonumber \\
 & +2c_{2}\tau\sum_{n=0}^{m}\left\Vert \nabla\cdot e_{\boldsymbol{w}}^{n+1}\right\Vert ^{2}+|e_{q}^{m+1}|^{2}+\frac{\tau}{T}\sum_{n=0}^{m}|e_{q}^{n+1}|^{2}\nonumber \\
 & \le\frac{\tau}{2k_{2}}\sum_{n=0}^{m}\left\Vert \nabla\boldsymbol{u}^{n}\right\Vert ^{2}|e_{q}^{m^{*}+1}|^{2}+C\tau\sum_{n=0}^{m}\left(1+\left\Vert \nabla e_{\boldsymbol{u}}^{n}\right\Vert ^{2}\right)\left\Vert e_{\boldsymbol{u}}^{n}\right\Vert ^{2}+C\tau\sum_{n=0}^{m}\left(1+\left\Vert \nabla e_{\boldsymbol{w}}^{n}\right\Vert ^{2}\right)\left\Vert e_{\boldsymbol{w}}^{n}\right\Vert ^{2}\nonumber \\
 & \quad+C\tau^{2}\int_{0}^{t^{m+1}}\left(\left\Vert \boldsymbol{u}_{tt}(s)\right\Vert _{-1}^{2}+\left\Vert \boldsymbol{w}_{tt}(s)\right\Vert _{-1}^{2}+\left\Vert \boldsymbol{u}_{t}(s)\right\Vert _{2}^{2}+\left|q_{tt}(s)\right|^{2}+\left\Vert \boldsymbol{w}_{t}(s)\right\Vert ^{2}\right)ds.\label{eq:e_error_uwq_final4}
\end{align}
We deduce form \eqref{eq:gradu_uniform} again that
\[
\tau\sum_{n=0}^{m}\left(1+\left\Vert \nabla e_{\boldsymbol{u}}^{n}\right\Vert ^{2}\right)\le C,\quad\tau\sum_{n=0}^{m}\left(1+\left\Vert \nabla e_{\boldsymbol{w}}^{n}\right\Vert ^{2}\right)\le C.
\]
Using \eqref{eq:e_error_uwq_final3} and the discrete Gronwall inequality
in Lemma \ref{lem: gronwall2} to \eqref{eq:e_error_uwq_final4},
we complete the proof.
\end{proof}

\subsection{Error estimates for the pressure}

The section is devoted to prove the error estimate for the pressure.
For this end, we first establish the estimate on $\delta_{t}e_{\boldsymbol{u}}^{n+1}$.
\begin{lem}
\label{lem:est_du}Assume the exact solution satisfies $\boldsymbol{u}\in H^{2}(0,T;\boldsymbol{H}^{2}(\Omega))\bigcap H^{1}(0,T;\boldsymbol{H}^{2}(\Omega))\bigcap L^{\infty}(0,T;\boldsymbol{H}^{2}(\Omega))$ and $\boldsymbol{w}\in H^{2}(0,T;\boldsymbol{H}^{-1}(\Omega))\bigcap H^{1}(0,T;\boldsymbol{H}^{2}(\Omega))\bigcap L^{\infty}(0,T;\boldsymbol{H}^{2}(\Omega))$,
then we have the following error estimate for $m\ge0$
\begin{equation}
\left\Vert \nabla e_{\boldsymbol{u}}^{m+1}\right\Vert ^{2}+\tau\sum\limits _{n=0}^{m}\left\Vert \delta_{t}e_{\boldsymbol{u}}^{n+1}\right\Vert ^{2}+\nu_0\tau\sum\limits _{n=0}^{m}\left\Vert Ae_{\boldsymbol{u}}^{n+1}\right\Vert ^{2}\le C\tau^{2}.\label{eq:est_du}
\end{equation}
\end{lem}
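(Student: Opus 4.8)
The plan is to run the classical $H^{1}$-in-space energy estimate for the backward Euler/Stokes system, paying extra attention to the SAV nonlinearity and the micropolar coupling. First I would take the $\boldsymbol{L}^{2}$-inner product of the velocity error equation \eqref{eq:SAVerroru} with $\delta_{t}e_{\boldsymbol{u}}^{n+1}$. Since $\nabla\cdot e_{\boldsymbol{u}}^{k}=0$ for all $k$ by \eqref{eq:SAVerrordivu}, the increment $\delta_{t}e_{\boldsymbol{u}}^{n+1}$ is divergence free, so $(\nabla e_{p}^{n+1},\delta_{t}e_{\boldsymbol{u}}^{n+1})=0$; integrating the Laplacian by parts and applying \eqref{eq:e_identity_Euler} to $(\nabla e_{\boldsymbol{u}}^{n+1},\nabla\delta_{t}e_{\boldsymbol{u}}^{n+1})$ produces the telescoping term $\tfrac{\nu_{0}}{2\tau}\big(\|\nabla e_{\boldsymbol{u}}^{n+1}\|^{2}-\|\nabla e_{\boldsymbol{u}}^{n}\|^{2}+\|\nabla(e_{\boldsymbol{u}}^{n+1}-e_{\boldsymbol{u}}^{n})\|^{2}\big)$ together with the dissipation $\|\delta_{t}e_{\boldsymbol{u}}^{n+1}\|^{2}$, the right-hand side being $(R_{\boldsymbol{u}}^{n+1},\delta_{t}e_{\boldsymbol{u}}^{n+1})$, the convective difference tested against $\delta_{t}e_{\boldsymbol{u}}^{n+1}$, and the coupling term $2\nu_{r}\big(\nabla\times(\boldsymbol{w}^{n}-\boldsymbol{w}(t^{n+1})),\delta_{t}e_{\boldsymbol{u}}^{n+1}\big)$. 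To capture $\|Ae_{\boldsymbol{u}}^{n+1}\|$ I would read \eqref{eq:SAVerroru} as the steady Stokes problem $-\nu_{0}\Delta e_{\boldsymbol{u}}^{n+1}+\nabla e_{p}^{n+1}=\boldsymbol{g}^{n+1}$, with $\boldsymbol{g}^{n+1}$ the sum of $R_{\boldsymbol{u}}^{n+1}$, $-\delta_{t}e_{\boldsymbol{u}}^{n+1}$, the convective difference and $2\nu_{r}\nabla\times(\boldsymbol{w}^{n}-\boldsymbol{w}(t^{n+1}))$; applying the Leray projector $P$ gives $\nu_{0}\|Ae_{\boldsymbol{u}}^{n+1}\|\le\|\boldsymbol{g}^{n+1}\|$, so, up to the $\tau\sum\|\delta_{t}e_{\boldsymbol{u}}^{n+1}\|^{2}$ already controlled by the first identity, both estimates reduce to an $\boldsymbol{L}^{2}$-bound of the convective difference, the coupling term and the truncation error.

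The core of the proof is this $\boldsymbol{L}^{2}$-estimate of $\boldsymbol{u}(t^{n+1})\cdot\nabla\boldsymbol{u}(t^{n+1})-q^{n+1}\exp(t^{n+1}/T)\,\boldsymbol{u}^{n}\cdot\nabla\boldsymbol{u}^{n}$. I would use the decomposition of \eqref{eq:e_error_u_nonlinear_convective} --- the modelling pieces $(\boldsymbol{u}(t^{n+1})-\boldsymbol{u}^{n})\cdot\nabla\boldsymbol{u}(t^{n+1})$ and $\boldsymbol{u}^{n}\cdot\nabla(\boldsymbol{u}(t^{n+1})-\boldsymbol{u}^{n})$ plus the SAV piece $-e_{q}^{n+1}\exp(t^{n+1}/T)\,\boldsymbol{u}^{n}\cdot\nabla\boldsymbol{u}^{n}$ --- and further split $\boldsymbol{u}(t^{n+1})-\boldsymbol{u}^{n}=(\boldsymbol{u}(t^{n+1})-\boldsymbol{u}(t^{n}))-e_{\boldsymbol{u}}^{n}$. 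The ingredients are the 2D bound $\|\boldsymbol{a}\cdot\nabla\boldsymbol{b}\|\le C\|\boldsymbol{a}\|_{1}\|\boldsymbol{b}\|_{2}$ (from $\boldsymbol{H}^{1}\hookrightarrow\boldsymbol{L}^{4}$), the trilinear estimate \eqref{eq:e_estimate for trilinear form2d1} in its dual form $\|\boldsymbol{u}^{n}\cdot\nabla e_{\boldsymbol{u}}^{n}\|\le C\|\nabla\boldsymbol{u}^{n}\|^{1/2}\|\boldsymbol{u}^{n}\|^{1/2}\|\nabla e_{\boldsymbol{u}}^{n}\|^{1/2}\|Ae_{\boldsymbol{u}}^{n}\|^{1/2}$, the regularity hypotheses (in particular $\boldsymbol{u}_{t}\in L^{\infty}(0,T;\boldsymbol{H}^{2})$, $\boldsymbol{u}_{tt}\in L^{2}(0,T;\boldsymbol{L}^{2})$, $\boldsymbol{u}\in L^{\infty}(0,T;\boldsymbol{H}^{2})$), the stability bounds \eqref{eq:u_uniform}--\eqref{eq:gradu_uniform}, and Theorem \ref{thm: error_estimate_uwq}, which gives $\|e_{\boldsymbol{u}}^{n}\|\le C\tau$, $\tau\sum_{n}\|\nabla e_{\boldsymbol{u}}^{n+1}\|^{2}\le C\tau^{2}$ and hence $\|\nabla e_{\boldsymbol{u}}^{n}\|\le C\tau^{1/2}$ and the uniform bound $\|\boldsymbol{u}^{n}\|_{1}\le C$. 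With these, the time-increment contributions integrate to $C\tau^{2}\int_{0}^{T}(\|\boldsymbol{u}_{t}\|_{2}^{2}+\|\boldsymbol{u}_{t}\|_{1}^{2})\,ds$, and after Cauchy--Schwarz and Young the $e_{\boldsymbol{u}}^{n}$-contributions leave only $\tau$-sums of $\|e_{\boldsymbol{u}}^{n}\|^{2}$ and $\|\nabla e_{\boldsymbol{u}}^{n}\|^{2}$ (all $O(\tau^{2})$), a Gronwall-type term $C\tau\sum_{n}(1+\|\nabla\boldsymbol{u}^{n}\|^{2})\|\nabla e_{\boldsymbol{u}}^{n}\|^{2}$, and absorbable fractions of $\tau\sum_{n}\|Ae_{\boldsymbol{u}}^{n+1}\|^{2}$ and $\tau\sum_{n}\|\delta_{t}e_{\boldsymbol{u}}^{n+1}\|^{2}$. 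The coupling term, rewritten through $\boldsymbol{w}^{n}-\boldsymbol{w}(t^{n+1})=e_{\boldsymbol{w}}^{n}-(\boldsymbol{w}(t^{n+1})-\boldsymbol{w}(t^{n}))$ and \eqref{eq:norm curl}, contributes $C\tau\sum_{n}(\|\nabla e_{\boldsymbol{w}}^{n}\|^{2}+\|\nabla(\boldsymbol{w}(t^{n+1})-\boldsymbol{w}(t^{n}))\|^{2})\le C\tau^{2}$ by Theorem \ref{thm: error_estimate_uwq} and the $\boldsymbol{w}$-regularity, and the truncation term gives $C\tau^{2}\int_{0}^{T}\|\boldsymbol{u}_{tt}\|^{2}\,ds$.

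The one genuinely delicate term is the SAV piece $-e_{q}^{n+1}\exp(t^{n+1}/T)\,\boldsymbol{u}^{n}\cdot\nabla\boldsymbol{u}^{n}$, whose $\boldsymbol{L}^{2}$-norm really requires an $\boldsymbol{H}^{2}$-control of the purely numerical $\boldsymbol{u}^{n}$ that is not available uniformly. I would circumvent this via \eqref{eq:e_estimate for trilinear form2d2}, namely $\|\boldsymbol{u}^{n}\cdot\nabla\boldsymbol{u}^{n}\|\le C\|\boldsymbol{u}^{n}\|^{1/2}\|A\boldsymbol{u}^{n}\|^{1/2}\|\nabla\boldsymbol{u}^{n}\|$ together with $\|A\boldsymbol{u}^{n}\|\le\|Ae_{\boldsymbol{u}}^{n}\|+C\|\boldsymbol{u}(t^{n})\|_{2}$: because $|e_{q}^{n+1}|\le C\tau$ by Theorem \ref{thm: error_estimate_uwq}, this term carries an extra power of $\tau$, so after Young's inequality the dangerous $\|Ae_{\boldsymbol{u}}^{n}\|$ enters only through a small multiple of $\tau\sum_{n}\|Ae_{\boldsymbol{u}}^{n+1}\|^{2}$ (absorbed, using $e_{\boldsymbol{u}}^{0}=0$ for the harmless index shift) and the remainder is $O(\tau^{4})$ via $\|\nabla\boldsymbol{u}^{n}\|\le C$, while the part proportional to $\|\boldsymbol{u}(t^{n})\|_{2}$ is directly $O(\tau^{2})$. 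Collecting everything, choosing all small parameters so that the $\|Ae_{\boldsymbol{u}}^{n+1}\|^{2}$- and $\|\delta_{t}e_{\boldsymbol{u}}^{n+1}\|^{2}$-terms are moved to the left, multiplying by $2\tau$ and summing over $n$ from $0$ to $m$, I arrive at
\[
\|\nabla e_{\boldsymbol{u}}^{m+1}\|^{2}+\tau\sum_{n=0}^{m}\|\delta_{t}e_{\boldsymbol{u}}^{n+1}\|^{2}+\nu_{0}\tau\sum_{n=0}^{m}\|Ae_{\boldsymbol{u}}^{n+1}\|^{2}\le C\tau\sum_{n=0}^{m}\big(1+\|\nabla\boldsymbol{u}^{n}\|^{2}\big)\|\nabla e_{\boldsymbol{u}}^{n}\|^{2}+C\tau^{2},
\]
and since $\tau\sum_{n}(1+\|\nabla\boldsymbol{u}^{n}\|^{2})\le C$ by \eqref{eq:gradu_uniform}, the discrete Gronwall inequality of Lemma \ref{lem: gronwall2} yields \eqref{eq:est_du}.
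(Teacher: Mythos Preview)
Your proof is correct and follows essentially the same route as the paper's: the paper tests \eqref{eq:SAVerroru} with $Ae_{\boldsymbol{u}}^{n+1}+\delta_{t}e_{\boldsymbol{u}}^{n+1}$ in one shot (which is exactly your two steps combined), uses the same convective decomposition \eqref{eq:e_error_u_nonlinear_convective}, the preliminary bound $\|\nabla e_{\boldsymbol{u}}^{n}\|\le C\tau^{1/2}$ from Theorem~\ref{thm: error_estimate_uwq}, and closes with the index shift on $\|Ae_{\boldsymbol{u}}^{n}\|^{2}$ plus discrete Gronwall. The only cosmetic difference is the SAV piece: the paper splits $\boldsymbol{u}^{n}\cdot\nabla\boldsymbol{u}^{n}=\boldsymbol{u}^{n}\cdot\nabla e_{\boldsymbol{u}}^{n}+\boldsymbol{u}^{n}\cdot\nabla\boldsymbol{u}(t^{n})$ and uses \eqref{eq:e_estimate for trilinear form2d1}/\eqref{eq:e_estimate for trilinear form4}, whereas you keep it whole and use \eqref{eq:e_estimate for trilinear form2d2} together with $|e_{q}^{n+1}|\le C\tau$ --- both produce the same absorbable $\|Ae_{\boldsymbol{u}}^{n}\|^{2}$ contribution.
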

\begin{proof}
First of all, in virtue of \eqref{eq:estuwq}, we have
\[
\left\Vert \nabla e_{\boldsymbol{u}}^{n+1}\right\Vert ^{2}\le\tau^{-1}\left(\tau\sum_{n=0}^{m}\left\Vert \nabla e_{\boldsymbol{u}}^{n+1}\right\Vert ^{2}\right)\leq C\tau,\quad\left\Vert \nabla e_{\boldsymbol{w}}^{n+1}\right\Vert ^{2}\le\tau^{-1}\left(\tau\sum_{n=0}^{m}\left\Vert \nabla e_{\boldsymbol{w}}^{n+1}\right\Vert ^{2}\right)\leq C\tau.
\]
Hence, there holds that
\begin{equation}
\left\Vert \nabla\boldsymbol{u}^{n+1}\right\Vert \leq\left\Vert \nabla e_{\boldsymbol{u}}^{n+1}\right\Vert +\left\Vert \nabla\boldsymbol{u}\left(t^{n+1}\right)\right\Vert \leq C\left(\tau^{1/2}+\left\Vert \nabla\boldsymbol{u}\left(t^{n+1}\right)\right\Vert \right).\label{eq:gradu}
\end{equation}
Taking the inner product of \eqref{eq:SAVerroru} with $Ae_{\boldsymbol{u}}^{n+1}+\delta_{t}e_{\boldsymbol{u}}^{n+1}$,
we obtain 
\begin{align}
 & (1+\nu_0)\frac{\left\Vert \nabla e_{\boldsymbol{u}}^{n+1}\right\Vert ^{2}-\left\Vert \nabla e_{\boldsymbol{u}}^{n}\right\Vert ^{2}+\left\Vert \nabla e_{\boldsymbol{u}}^{n+1}-\nabla e_{\boldsymbol{u}}^{n}\right\Vert ^{2}}{2\tau}+\left\Vert \delta_{t}e_{\boldsymbol{u}}^{n+1}\right\Vert ^{2}+\nu_0\left\Vert Ae_{\boldsymbol{u}}^{n+1}\right\Vert ^{2}\nonumber \\
 & =\left(R_{\boldsymbol{u}}^{n+1},Ae_{\boldsymbol{u}}^{n+1}+\delta_{t}e_{\boldsymbol{u}}^{n+1}\right)+\left(2\nu_{r}\left(\nabla\times\boldsymbol{w}^{n}-\nabla\times\boldsymbol{w}(t^{n+1})\right),Ae_{\boldsymbol{u}}^{n+1}+\delta_{t}e_{\boldsymbol{u}}^{n+1}\right)\nonumber \\
 & +\left(\boldsymbol{u}\left(t^{n+1}\right)\cdot\nabla\boldsymbol{u}\left(t^{n+1}\right)-q^{n+1}\exp{\left(\frac{t^{n+1}}{T}\right)}\boldsymbol{u}^{n}\cdot\nabla\boldsymbol{u}^{n},Ae_{\boldsymbol{u}}^{n+1}+\delta_{t}e_{\boldsymbol{u}}^{n+1}\right).\label{eq:e_error_Au}
\end{align}
For the first term on the right hand side of \eqref{eq:e_error_Au},
we get 
\begin{equation}
\left(R_{\boldsymbol{u}}^{n+1},Ae_{\boldsymbol{u}}^{n+1}+\delta_{t}e_{\boldsymbol{u}}^{n+1}\right)\leq\frac{1}{12}\left\Vert \delta_{t}e_{\boldsymbol{u}}^{n+1}\right\Vert ^{2}+\frac{\nu_0}{24}\left\Vert Ae_{\boldsymbol{u}}^{n+1}\right\Vert ^{2}+C\tau\int_{t^{n}}^{t^{n+1}}\left\Vert \boldsymbol{u}_{tt}(s)\right\Vert ^{2}ds.\label{eq:e_error_Au_Ru}
\end{equation}
For the second term on the right hand side of \eqref{eq:e_error_Au},
we get 
\begin{align*}
&\left(2\nu_{r}\left(\nabla\times\boldsymbol{w}^{n}-\nabla\times\boldsymbol{w}(t^{n+1})\right),Ae_{\boldsymbol{u}}^{n+1}+\delta_{t}e_{\boldsymbol{u}}^{n+1}\right) \\
& =2\nu_{r}\left(\nabla\times e_{\boldsymbol{w}}^{n},Ae_{\boldsymbol{u}}^{n+1}+\delta_{t}e_{\boldsymbol{u}}^{n+1}\right)-2\nu_{r}\left(\nabla\times\left(\boldsymbol{w}(t^{n+1})-\boldsymbol{w}(t_{n})\right),Ae_{\boldsymbol{u}}^{n+1}+\delta_{t}e_{\boldsymbol{u}}^{n+1}\right)\\
 & \leq\frac{1}{6}\left\Vert \delta_{t}e_{\boldsymbol{u}}^{n+1}\right\Vert ^{2}+\frac{\nu_0}{12}\left\Vert Ae_{\boldsymbol{u}}^{n+1}\right\Vert ^{2}+C\left\Vert \nabla e_{\boldsymbol{w}}^{n}\right\Vert ^{2}+C\tau\int_{t^{n}}^{t^{n+1}}\left\Vert \nabla\boldsymbol{w}_{t}(s)\right\Vert ^{2}ds
\end{align*}
For the last term on the right hand side of \eqref{eq:e_error_Au},
we have 
\begin{align}
 & \left(\boldsymbol{u}(t^{n+1})\cdot\nabla\boldsymbol{u}(t^{n+1})-q^{n+1}\exp{\left(\frac{t^{n+1}}{T}\right)}\boldsymbol{u}^{n}\cdot\nabla\boldsymbol{u}^{n},Ae_{\boldsymbol{u}}^{n+1}+\delta_{t}e_{\boldsymbol{u}}^{n+1}\right)\nonumber \\
 & =-e_{q}^{n+1}\exp{\left(\frac{t^{n+1}}{T}\right)}\left((\boldsymbol{u}^{n}\cdot\nabla)\boldsymbol{u}^{n},Ae_{\boldsymbol{u}}^{n+1}+\delta_{t}e_{\boldsymbol{u}}^{n+1}\right) +\left((\boldsymbol{u}(t^{n+1})-\boldsymbol{u}^{n})\cdot\nabla\boldsymbol{u}(t^{n+1}),Ae_{\boldsymbol{u}}^{n+1}+\delta_{t}e_{\boldsymbol{u}}^{n+1}\right)\nonumber \\
 & \quad+\left(\boldsymbol{u}^{n}\cdot\nabla(\boldsymbol{u}(t^{n+1})-\boldsymbol{u}^{n}),Ae_{\boldsymbol{u}}^{n+1}+\delta_{t}e_{\boldsymbol{u}}^{n+1}\right).\label{eq:e_error_Au_nonlinear1}
\end{align}
Using \eqref{eq:e_estimate for trilinear form2d1}, \eqref{eq:e_estimate for trilinear form4}
and \eqref{eq:gradu}, the first term on the right hand side of \eqref{eq:e_error_Au_nonlinear1}
can be bounded by 
\begin{align}
 & \begin{aligned}-\end{aligned}
e_{q}^{n+1}\exp{\left(\frac{t^{n+1}}{T}\right)}\left(\boldsymbol{u}^{n}\cdot\nabla\boldsymbol{u}^{n},Ae_{\boldsymbol{u}}^{n+1}+\delta_{t}e_{\boldsymbol{u}}^{n+1}\right)\nonumber \\
 & =-e_{q}^{n+1}\exp{\left(\frac{t^{n+1}}{T}\right)}\left(\boldsymbol{u}^{n}\cdot\nabla e_{\boldsymbol{u}}^{n},Ae_{\boldsymbol{u}}^{n+1}+\delta_{t}e_{\boldsymbol{u}}^{n+1}\right) -e_{q}^{n+1}\exp{\left(\frac{t^{n+1}}{T}\right)}\left((\boldsymbol{u}^{n}\cdot\nabla\boldsymbol{u}(t^{n}),Ae_{\boldsymbol{u}}^{n+1}+\delta_{t}e_{\boldsymbol{u}}^{n+1}\right)\nonumber \\
 & \leq C_{b,7}|e_{q}^{n+1}|\left\Vert \boldsymbol{u}^{n}\right\Vert ^{1/2}\left\Vert \nabla\boldsymbol{u}^{n}\right\Vert ^{1/2}\left\Vert \nabla e_{\boldsymbol{u}}^{n}\right\Vert ^{1/2}\left\Vert Ae_{\boldsymbol{u}}^{n}\right\Vert ^{1/2}\left\Vert Ae_{\boldsymbol{u}}^{n+1}+\delta_{t}e_{\boldsymbol{u}}^{n+1}\right\Vert \nonumber \\
 & \quad+C_{b,5}|e_{q}^{n+1}|\left\Vert \nabla\boldsymbol{u}^{n}\right\Vert \left\Vert \boldsymbol{u}(t^{n})\right\Vert _{2}\left\Vert Ae_{\boldsymbol{u}}^{n+1}+\delta_{t}e_{\boldsymbol{u}}^{n+1}\right\Vert \nonumber \\
 & \leq\frac{1}{12}\left\Vert \delta_{t}e_{\boldsymbol{u}}^{n+1}\right\Vert ^{2}+\frac{\nu_0}{24}\left\Vert Ae_{\boldsymbol{u}}^{n+1}\right\Vert ^{2}+\frac{\nu_0}{8}\left\Vert Ae_{\boldsymbol{u}}^{n}\right\Vert ^{2}\nonumber \\
 & \quad+C\left(\tau+\left\Vert \nabla\boldsymbol{u}(t^{n})\right\Vert ^{2}\right)\left\Vert \nabla e_{\boldsymbol{u}}^{n}\right\Vert ^{2}+C\left(\tau+\left\Vert \nabla\boldsymbol{u}(t^{n})\right\Vert ^{2}\right)\left\Vert \boldsymbol{u}(t^{n})\right\Vert _{2}|e_{q}^{n+1}|^{2}.\label{eq:e_error_Au_nonlinear2}
\end{align}
Similarly, the second term on the right hand side of \eqref{eq:e_error_Au_nonlinear1}
can be estimated by 
\begin{align}
 & \begin{aligned}\big(\end{aligned}
(\boldsymbol{u}(t^{n+1})-\boldsymbol{u}^{n})\cdot\nabla\boldsymbol{u}(t^{n+1}),Ae_{\boldsymbol{u}}^{n+1}+\delta_{t}e_{\boldsymbol{u}}^{n+1}\big)\nonumber \\
 & \leq C_{b,5}\left\Vert \nabla\boldsymbol{u}(t^{n+1})-\nabla\boldsymbol{u}^{n}\right\Vert \left\Vert \boldsymbol{u}(t^{n+1})\right\Vert _{2}\left\Vert Ae_{\boldsymbol{u}}^{n+1}+\delta_{t}e_{\boldsymbol{u}}^{n+1}\right\Vert \nonumber \\
 & \leq\frac{1}{12}\left\Vert \delta_{t}e_{\boldsymbol{u}}^{n+1}\right\Vert ^{2}+\frac{\nu_0}{24}\left\Vert Ae_{\boldsymbol{u}}^{n+1}\right\Vert ^{2}+C\left\Vert \boldsymbol{u}(t^{n+1})\right\Vert _{2}\left\Vert \nabla e_{\boldsymbol{u}}^{n}\right\Vert ^{2}+C\left\Vert \boldsymbol{u}(t^{n+1})\right\Vert _{2}^{2}\tau\int_{t^{n}}^{t^{n+1}}\left\Vert \nabla\boldsymbol{u}_{t}(s)\right\Vert ^{2}ds. \label{eq:e_error_Au_nonlinear3}
\end{align}
The last term on the right hand side of \eqref{eq:e_error_Au_nonlinear1}
can be bounded by 
\begin{align}
 & \begin{aligned}\big(\end{aligned}
\boldsymbol{u}^{n}\cdot\nabla(\boldsymbol{u}(t^{n+1})-\boldsymbol{u}^{n}),Ae_{\boldsymbol{u}}^{n+1}+\delta_{t}e_{\boldsymbol{u}}^{n+1}\big)\nonumber \\
 & =\left(\boldsymbol{u}^{n}\cdot\nabla(\boldsymbol{u}(t^{n+1})-\boldsymbol{u}(t^{n})),Ae_{\boldsymbol{u}}^{n+1}+\delta_{t}e_{\boldsymbol{u}}^{n+1}\right)-\left(\boldsymbol{u}^{n}\cdot\nabla e_{\boldsymbol{u}}^{n},Ae_{\boldsymbol{u}}^{n+1}+\delta_{t}e_{\boldsymbol{u}}^{n+1}\right)\nonumber \\
 & \leq C_{b,5}\left\Vert \nabla\boldsymbol{u}^{n}\right\Vert \left\Vert \boldsymbol{u}(t^{n+1})-\boldsymbol{u}(t^{n})\right\Vert _{2}\left\Vert Ae_{\boldsymbol{u}}^{n+1}+\delta_{t}e_{\boldsymbol{u}}^{n+1}\right\Vert \nonumber \\
 &+C_{b,7}\left\Vert \boldsymbol{u}^{n}\right\Vert ^{1/2}\left\Vert \nabla\boldsymbol{u}^{n}\right\Vert ^{1/2}\left\Vert \nabla e_{\boldsymbol{u}}^{n}\right\Vert ^{1/2}\left\Vert Ae_{\boldsymbol{u}}^{n}\right\Vert ^{1/2}\left\Vert Ae_{\boldsymbol{u}}^{n+1}+\delta_{t}e_{\boldsymbol{u}}^{n+1}\right\Vert \nonumber \\
 & \leq\frac{1}{12}\left\Vert \delta_{t}e_{\boldsymbol{u}}^{n+1}\right\Vert ^{2}+\frac{\nu_0}{24}\left\Vert Ae_{\boldsymbol{u}}^{n+1}\right\Vert ^{2}+C\left(\tau+\left\Vert \nabla\boldsymbol{u}(t^{n})\right\Vert ^{2}\right)\left\Vert \nabla e_{\boldsymbol{u}}^{n}\right\Vert ^{2}\nonumber \\
 & \quad+\frac{\nu_0}{8}\left\Vert Ae_{\boldsymbol{u}}^{n}\right\Vert ^{2}+C\tau\left(\tau+\left\Vert \nabla\boldsymbol{u}(t^{n})\right\Vert ^{2}\right)\int_{t^{n}}^{t^{n+1}}\left\Vert \boldsymbol{u}_{t}(s)\right\Vert _{2}^{2}ds.\label{eq:e_error_Au_nonlinear4}
\end{align}
Combining \eqref{eq:e_error_Au} with \eqref{eq:e_error_Au_Ru}-\eqref{eq:e_error_Au_nonlinear4},
we have 
\begin{align}
 & (1+\nu_0)\frac{\left\Vert \nabla e_{\boldsymbol{u}}^{n+1}\right\Vert ^{2}-\left\Vert \nabla e_{\boldsymbol{u}}^{n}\right\Vert ^{2}+\left\Vert \nabla e_{\boldsymbol{u}}^{n+1}-\nabla e_{\boldsymbol{u}}^{n}\right\Vert ^{2}}{2\tau}+\frac{1}{2}\left\Vert \delta_{t}e_{\boldsymbol{u}}^{n+1}\right\Vert ^{2}+\frac{3\nu_0}{4}\left\Vert Ae_{\boldsymbol{u}}^{n+1}\right\Vert ^{2}\nonumber \\
 & \le\frac{\nu_0}{4}\left\Vert Ae_{\boldsymbol{u}}^{n}\right\Vert ^{2}+C\left(1+\tau+\left\Vert \nabla\boldsymbol{u}(t^{n})\right\Vert ^{2}\right)\left(\left\Vert \nabla e_{\boldsymbol{u}}^{n}\right\Vert ^{2}+|e_{q}^{n+1}|^{2}\right)+C\left\Vert \nabla e_{\boldsymbol{w}}^{n}\right\Vert ^{2}\nonumber \\
 & \quad+C\tau\int_{t^{n}}^{t^{n+1}}\left(\left\Vert \boldsymbol{u}_{t}(s)\right\Vert _{2}^{2}ds+\left\Vert \nabla\boldsymbol{u}_{t}(s)\right\Vert ^{2}+\left\Vert \boldsymbol{u}_{tt}(s)\right\Vert ^{2}+\left\Vert \nabla\boldsymbol{w}_{t}(s)\right\Vert ^{2}\right)ds.\label{eq:e_error_AuAb_final1}
\end{align}
Multiplying \eqref{eq:e_error_AuAb_final1} by $2\tau$ and summing
over $n$ from 0 to $m$, and applying the discrete Gronwall inequality
in Lemma \ref{lem: gronwall2}, we obtain 
\begin{align}
 & \left\Vert \nabla e_{\boldsymbol{u}}^{m+1}\right\Vert ^{2}+\tau\sum\limits _{n=0}^{m}\left\Vert \delta_{t}e_{\boldsymbol{u}}^{n+1}\right\Vert ^{2}+\nu_0\tau\sum\limits _{n=0}^{m}\left\Vert Ae_{\boldsymbol{u}}^{n+1}\right\Vert ^{2}\nonumber \\
 & \le C\left(1+\tau+\left\Vert \nabla\boldsymbol{u}(t^{n})\right\Vert ^{2}\right)\tau\sum\limits _{n=0}^{m}\left(\left\Vert \nabla e_{\boldsymbol{u}}^{n}\right\Vert ^{2}+|e_{q}^{n+1}|^{2}\right)+C\tau^{2}.\label{eq:e_error_AuAb_final2}
\end{align}
Combining the above estimate with Theorem \ref{thm: error_estimate_uwq},
we obtain the desired result.
\end{proof}
\medskip{}

We are now in position to prove the error estimates for the pressure. 
\begin{thm}
\label{thm:error_estimate_p}Assume the exact solution satisfies $\boldsymbol{u}\in H^{2}(0,T;\boldsymbol{H}^{2}(\Omega))\bigcap H^{1}(0,T;\boldsymbol{H}^{2}(\Omega))\bigcap L^{\infty}(0,T;\boldsymbol{H}^{2}(\Omega))$
,$p\in L^{2}(0,T;L_{0}^{2}(\Omega))$ and $\boldsymbol{w}\in H^{2}(0,T;\boldsymbol{H}^{-1}(\Omega))\bigcap H^{1}(0,T;\boldsymbol{H}^{2}(\Omega))\bigcap L^{\infty}(0,T;\boldsymbol{H}^{2}(\Omega))$, then we have the following
error estimate for $m\ge0$
\begin{equation}
\tau\sum_{n=0}^{m}\left\Vert e_{p}^{n+1}\right\Vert ^{2}\le C\tau^{2}.\label{eq:est_p}
\end{equation}
\end{thm}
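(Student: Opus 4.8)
The plan is to recover $\nabla e_p^{n+1}$ from the momentum error equation~\eqref{eq:SAVerroru} and to control $e_p^{n+1}$ through the discrete inf--sup (LBB) condition. Normalizing $p^{n+1}$ to have zero mean, $e_p^{n+1}\in L_0^2(\Omega)$, so there is a constant $\beta>0$ independent of $\tau$ with
\[
\beta\,\left\Vert e_p^{n+1}\right\Vert\le\sup_{\boldsymbol{0}\ne\boldsymbol{v}\in\boldsymbol{X}}\frac{\left(\nabla e_p^{n+1},\boldsymbol{v}\right)}{\left\Vert \nabla\boldsymbol{v}\right\Vert}.
\]
Taking the $\boldsymbol{L}^2$-inner product of~\eqref{eq:SAVerroru} with $\boldsymbol{v}\in\boldsymbol{X}$, integrating by parts in the viscous term and using the curl identity, $\left(\nabla e_p^{n+1},\boldsymbol{v}\right)$ is the sum of $(R_{\boldsymbol{u}}^{n+1},\boldsymbol{v})$, $-(\delta_t e_{\boldsymbol{u}}^{n+1},\boldsymbol{v})$, $-\nu_0(\nabla e_{\boldsymbol{u}}^{n+1},\nabla\boldsymbol{v})$, $2\nu_r(\boldsymbol{w}^n-\boldsymbol{w}(t^{n+1}),\nabla\times\boldsymbol{v})$, and the convective term $\big(\boldsymbol{u}(t^{n+1})\cdot\nabla\boldsymbol{u}(t^{n+1})-q^{n+1}\exp(t^{n+1}/T)\,\boldsymbol{u}^n\cdot\nabla\boldsymbol{u}^n,\boldsymbol{v}\big)$. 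I would bound each term by $C\,\Xi_n\,\left\Vert \nabla\boldsymbol{v}\right\Vert$ where $\tau\sum_{n=0}^m\Xi_n^2\le C\tau^2$, then divide by $\left\Vert \nabla\boldsymbol{v}\right\Vert$, take the supremum, square, multiply by $\tau$ and sum over $n$ to reach~\eqref{eq:est_p}.

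For the four linear terms this is routine. One has $(R_{\boldsymbol{u}}^{n+1},\boldsymbol{v})\le\left\Vert R_{\boldsymbol{u}}^{n+1}\right\Vert_{-1}\left\Vert \nabla\boldsymbol{v}\right\Vert$ with $\left\Vert R_{\boldsymbol{u}}^{n+1}\right\Vert_{-1}^2\le C\tau\int_{t^n}^{t^{n+1}}\left\Vert \boldsymbol{u}_{tt}(s)\right\Vert_{-1}^2\,ds$; $(\delta_t e_{\boldsymbol{u}}^{n+1},\boldsymbol{v})\le C\left\Vert \delta_t e_{\boldsymbol{u}}^{n+1}\right\Vert\left\Vert \nabla\boldsymbol{v}\right\Vert$ by the Poincar\'e inequality; $\nu_0(\nabla e_{\boldsymbol{u}}^{n+1},\nabla\boldsymbol{v})\le\nu_0\left\Vert \nabla e_{\boldsymbol{u}}^{n+1}\right\Vert\left\Vert \nabla\boldsymbol{v}\right\Vert$; and, writing $\boldsymbol{w}^n-\boldsymbol{w}(t^{n+1})=e_{\boldsymbol{w}}^n-\int_{t^n}^{t^{n+1}}\boldsymbol{w}_t(s)\,ds$, the coupling term is $\le C\big(\left\Vert e_{\boldsymbol{w}}^n\right\Vert^2+\tau\int_{t^n}^{t^{n+1}}\left\Vert \boldsymbol{w}_t\right\Vert^2\,ds\big)^{1/2}\left\Vert \nabla\boldsymbol{v}\right\Vert$. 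Summing the squares against $\tau$, these contribute $C\tau^2$ by Lemma~\ref{lem:est_du} (for $\tau\sum\left\Vert \delta_t e_{\boldsymbol{u}}^{n+1}\right\Vert^2$), by Theorem~\ref{thm: error_estimate_uwq} (for $\tau\sum\left\Vert \nabla e_{\boldsymbol{u}}^{n+1}\right\Vert^2$ and for $\tau\sum\left\Vert e_{\boldsymbol{w}}^n\right\Vert^2$, using $e_{\boldsymbol{w}}^0=0$ and the uniform bound $\left\Vert e_{\boldsymbol{w}}^n\right\Vert^2\le C\tau^2$), and by the assumed temporal regularity of $\boldsymbol{u}$ and $\boldsymbol{w}$.

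The essential term is the convective one. Using $q^{n+1}\exp(t^{n+1}/T)=1+e_q^{n+1}\exp(t^{n+1}/T)$ (recall $q(t^{n+1})=\exp(-t^{n+1}/T)$), I would split it as
\[
e_q^{n+1}\exp(t^{n+1}/T)\big(\boldsymbol{u}^n\cdot\nabla\boldsymbol{u}^n,\boldsymbol{v}\big)+\big((\boldsymbol{u}(t^{n+1})-\boldsymbol{u}^n)\cdot\nabla\boldsymbol{u}(t^{n+1}),\boldsymbol{v}\big)+\big(\boldsymbol{u}^n\cdot\nabla(\boldsymbol{u}(t^{n+1})-\boldsymbol{u}^n),\boldsymbol{v}\big).
\]
A crucial preliminary fact, supplied by Lemma~\ref{lem:est_du}, is the uniform bound $\left\Vert \nabla\boldsymbol{u}^n\right\Vert\le\left\Vert \nabla e_{\boldsymbol{u}}^n\right\Vert+\left\Vert \nabla\boldsymbol{u}(t^n)\right\Vert\le C$. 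By~\eqref{eq:e_estimate for trilinear form0} the first piece is $\le C|e_q^{n+1}|\left\Vert \nabla\boldsymbol{u}^n\right\Vert^2\left\Vert \nabla\boldsymbol{v}\right\Vert\le C|e_q^{n+1}|\left\Vert \nabla\boldsymbol{v}\right\Vert$, and $\tau\sum|e_q^{n+1}|^2\le C\tau^2$ by Theorem~\ref{thm: error_estimate_uwq}. By~\eqref{eq:e_estimate for trilinear form} and $\left\Vert \boldsymbol{u}(t^{n+1})\right\Vert_2\le C$, the second piece is $\le C\left\Vert \boldsymbol{u}(t^{n+1})-\boldsymbol{u}^n\right\Vert\left\Vert \nabla\boldsymbol{v}\right\Vert$; writing $\boldsymbol{u}(t^{n+1})-\boldsymbol{u}^n=-e_{\boldsymbol{u}}^n+\int_{t^n}^{t^{n+1}}\boldsymbol{u}_t(s)\,ds$ it contributes $C\big(\left\Vert e_{\boldsymbol{u}}^n\right\Vert^2+\tau\int_{t^n}^{t^{n+1}}\left\Vert \boldsymbol{u}_t\right\Vert^2\,ds\big)$ to $\left\Vert e_p^{n+1}\right\Vert^2$, which sums against $\tau$ to $C\tau^2$ since $\left\Vert e_{\boldsymbol{u}}^n\right\Vert^2\le C\tau^2$. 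By~\eqref{eq:e_estimate for trilinear form0} and the uniform $H^1$-bound, the third piece is $\le C\left\Vert \nabla(\boldsymbol{u}(t^{n+1})-\boldsymbol{u}^n)\right\Vert\left\Vert \nabla\boldsymbol{v}\right\Vert$, hence contributes $C\big(\left\Vert \nabla e_{\boldsymbol{u}}^n\right\Vert^2+\tau\int_{t^n}^{t^{n+1}}\left\Vert \nabla\boldsymbol{u}_t\right\Vert^2\,ds\big)$, which sums to $C\tau^2$ by $\tau\sum\left\Vert \nabla e_{\boldsymbol{u}}^n\right\Vert^2\le C\tau^2$ and $\boldsymbol{u}\in H^1(0,T;\boldsymbol{H}^2(\Omega))$.

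Collecting all the bounds yields $\beta^2\left\Vert e_p^{n+1}\right\Vert^2\le C\Theta_n$ with $\tau\sum_{n=0}^m\Theta_n\le C\tau^2$ (using $e_{\boldsymbol{u}}^0=e_{\boldsymbol{w}}^0=0$); multiplying by $\tau$ and summing over $n=0,\dots,m$ gives~\eqref{eq:est_p}. I expect the main obstacle to be the convective term: in contrast to the velocity, angular-velocity and $\nabla\boldsymbol{u}$ estimates, there is no dissipative quantity on the left-hand side here to absorb a fraction of $\left\Vert \nabla\boldsymbol{v}\right\Vert$, so every fragment of the discretized convection must be bounded \emph{solely} by quantities already proved to be of order $\tau^2$ in mean square. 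This is precisely what makes the sharper estimates~\eqref{eq:est_du} on $\left\Vert \nabla e_{\boldsymbol{u}}^{n+1}\right\Vert$ and $\left\Vert \delta_t e_{\boldsymbol{u}}^{n+1}\right\Vert$ indispensable, and it is through them (together with the trilinear inequalities~\eqref{eq:e_estimate for trilinear form0} and~\eqref{eq:e_estimate for trilinear form}) that the two-dimensional restriction and the strengthened hypotheses $\boldsymbol{u}\in H^2(0,T;\boldsymbol{H}^2(\Omega))$, $p\in L^2(0,T;L_0^2(\Omega))$ enter.
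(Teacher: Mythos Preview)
Your proposal is correct and follows essentially the same route as the paper: test the momentum error equation~\eqref{eq:SAVerroru} against $\boldsymbol{v}\in\boldsymbol{X}$, invoke the inf--sup condition, split the convective contribution into the same three pieces via $q^{n+1}\exp(t^{n+1}/T)=1+e_q^{n+1}\exp(t^{n+1}/T)$, bound them with the trilinear inequalities and the uniform $H^1$-control of $\boldsymbol{u}^n$ from Lemma~\ref{lem:est_du}, and close using Theorem~\ref{thm: error_estimate_uwq} and Lemma~\ref{lem:est_du}. The only cosmetic differences are that the paper uses~\eqref{eq:e_estimate for trilinear form0} for all three convective pieces (so the second piece is controlled by $\left\Vert \nabla e_{\boldsymbol{u}}^{n}\right\Vert$ rather than $\left\Vert e_{\boldsymbol{u}}^{n}\right\Vert$), and note a sign slip in your displayed splitting (the $e_q^{n+1}$ term should carry a minus sign), which of course is immaterial for the estimate.
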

\begin{proof}
Taking the inner product of \eqref{eq:SAVerroru} with $\boldsymbol{v}\in\boldsymbol{X}$,
we obtain 
\begin{align}
\begin{aligned}(\end{aligned}
\nabla e_{p}^{n+1},\boldsymbol{v}) & =-(\delta_{t}e_{\boldsymbol{u}}^{n+1},\boldsymbol{v})-\nu_{0}(\nabla e_{\boldsymbol{u}}^{n+1},\nabla\boldsymbol{v})+(R_{\boldsymbol{u}}^{n+1},\boldsymbol{v})+2\nu_{r}\left(\nabla\times\boldsymbol{w}^{n}-\nabla\times\boldsymbol{w}(t^{n+1}),\boldsymbol{v}\right)\nonumber \\
 & \quad+\left((\boldsymbol{u}(t^{n+1})\cdot\nabla)\boldsymbol{u}(t^{n+1})-q^{n+1}\exp{\left(\frac{t^{n+1}}{T}\right)}(\boldsymbol{u}^{n}\cdot\nabla)\boldsymbol{u}^{n},\boldsymbol{v}\right).\label{eq:e_error_p1}
\end{align}
Note that we only need to estimate the last two term on the right hand
side of \eqref{eq:e_error_p1}. For all $\boldsymbol{v}\in\boldsymbol{X}$, we use Cauchy-Schwarz inequality, Young inequality and \eqref{eq:norm curl}
to estimate the fourth term as
\begin{align}
2\nu_{r}\left(\nabla\times\boldsymbol{w}^{n}-\nabla\times\boldsymbol{w}(t^{n+1}),\boldsymbol{v}\right)&=2\nu_{r}\left(\boldsymbol{w}^{n}-\boldsymbol{w}(t^{n+1}),\nabla\times \boldsymbol{v}\right)\nonumber \\
	& =2\nu_{r}\left(e_{\boldsymbol{w}}^{n},\nabla\times \boldsymbol{v}\right)-2\nu_{r}\left(\boldsymbol{w}(t^{n+1})-\boldsymbol{w}(t_{n}),\nabla\times \boldsymbol{v}\right)\nonumber \\
	& \le2\nu_{r}\left\Vert e_{\boldsymbol{w}}^{n}\right\Vert \left\Vert \nabla \boldsymbol{v}\right\Vert +2\nu_{r}\left\Vert \int_{t^{n}}^{t^{n+1}}\boldsymbol{w}_{t}(s)ds\right\Vert \left\Vert \nabla \boldsymbol{v}\right\Vert \nonumber \\
	& \leq C\left(\left\Vert e_{\boldsymbol{w}}^{n}\right\Vert ^{2}+\left\Vert \int_{t^{n}}^{t^{n+1}}\boldsymbol{w}_{t}(s)ds\right\Vert \right)\left\Vert \nabla \boldsymbol{v}\right\Vert.\label{eq:e_error_p0}
\end{align}
Invoking with \eqref{eq:e_estimate for trilinear form0}
and \eqref{eq:gradu}, we have
\begin{align}
 & \begin{aligned}\exp\end{aligned}
(\frac{t^{n+1}}{T})\left(q(t^{n+1})(\boldsymbol{u}(t^{n+1})\cdot\nabla)\boldsymbol{u}(t^{n+1})-q^{n+1}(\boldsymbol{u}^{n}\cdot\nabla)\boldsymbol{u}^{n},\boldsymbol{v}\right)\nonumber \\
 & =\left((\boldsymbol{u}(t^{n+1})-\boldsymbol{u}^{n})\cdot\nabla\boldsymbol{u}(t^{n+1}),\boldsymbol{v}\right)-e_{q}^{n+1}\exp{\left(\frac{t^{n+1}}{T}\right)}\left((\boldsymbol{u}^{n}\cdot\nabla)\boldsymbol{u}^{n},\boldsymbol{v}\right)+\left(\boldsymbol{u}^{n}\cdot\nabla(\boldsymbol{u}(t^{n+1})-\boldsymbol{u}^{n}),\boldsymbol{v}\right)\nonumber \\
 & \le C_{b,0}\left\Vert \nabla\boldsymbol{u}(t^{n+1})-\nabla\boldsymbol{u}^{n}\right\Vert \left\Vert\nabla \boldsymbol{u}(t^{n+1})\right\Vert\left\Vert \nabla\boldsymbol{v}\right\Vert +C_{b,0}\exp(1)|e_{q}^{n+1}|\left\Vert \nabla\boldsymbol{u}^{n}\right\Vert \left\Vert \nabla\boldsymbol{u}^{n}\right\Vert \left\Vert \nabla\boldsymbol{v}\right\Vert \nonumber \\
 & \quad+C_{b,0}\left\Vert \nabla\boldsymbol{u}^{n}\right\Vert \left\Vert \nabla\boldsymbol{u}(t^{n+1})-\nabla\boldsymbol{u}^{n}\right\Vert \left\Vert \nabla\boldsymbol{v}\right\Vert \nonumber \\
 & \leq C\left(\left\Vert \nabla e_{\boldsymbol{u}}^{n}\right\Vert +\left\Vert \int_{t^{n}}^{t^{n+1}}\nabla\boldsymbol{u}_{t}(s)ds\right\Vert +|e_{q}^{n+1}|\right)\left\Vert \nabla\boldsymbol{v}\right\Vert .\label{eq:e_error_p2}
\end{align}
Using Theorem \ref{thm: error_estimate_uwq}, Lemma \ref{lem:est_du}
and the inf-sup condition
\begin{equation}
\left\Vert e_{p}^{n+1}\right\Vert \leq\sup_{\boldsymbol{v}\in\boldsymbol{X}}\frac{(\nabla e_{p}^{n+1},\boldsymbol{v})}{\left\Vert \nabla\boldsymbol{v}\right\Vert },\label{eq:e_error_p3}
\end{equation}
we get
\begin{align*}
\tau\sum_{n=0}^{m}\left\Vert e_{p}^{n+1}\right\Vert ^{2} & \le C\tau\sum_{n=0}^{m}\left(\left\Vert \delta_{t}e_{\boldsymbol{u}}^{n+1}\right\Vert ^{2}+\nu_{0}\left\Vert \nabla e_{\boldsymbol{u}}^{n+1}\right\Vert ^{2}+\left\Vert \nabla e_{\boldsymbol{u}}^{n}\right\Vert ^{2}+|e_{q}^{n+1}|^{2}+\left\Vert e_{\boldsymbol{w}}^{n}\right\Vert ^{2}\right)\\
 & \quad+C\tau^{2}\int_{0}^{t^{m+1}}\left(\left\Vert \nabla\boldsymbol{u}_{t}(s)\right\Vert ^{2}+\left\Vert \boldsymbol{w}_{t}(s)\right\Vert ^{2}\right)ds\\
 & \le C\tau^{2}.
\end{align*}
The proof is complete. 
\end{proof}

\begin{rem}
In this paper, we only consider the first-order scheme for MNS equations. 	However, the SAV approach is known as a useful tool to design high-order schemes. What prevents us from studying the second-order scheme is that the SAV approach only used for the convective terms and the coupling terms are needed to deal with some subtle IMEX treatments. Undoubtedly, one can get a second-order scheme based on the second-order backward difference formula by using the SAV approach for the convective terms and making full-implicit treatments for the coupling terms. Although the obtained scheme is second-order and unconditionally energy stable, it is fully decoupled and thus one needs to solve large linear systems at each time step. Alternatively, one can also design a second-order scheme with a decoupled structure by replacing the full-implicit treatments with some subtle IMEX treatments for the coupling terms in \cite{Nochetto2014,Salgado2015}. By using the arguments in this paper, it is easy to give the stability and error estimates for the resulting scheme. But it is a pity that the scheme is only conditionally energy stable in the sense that the energy stability only holds with a time step restriction.  
\end{rem}

\section{Numerical experiments\label{sec:Num}}

In this section, we provide some numerical examples to verify
the theoretical findings of the proposed scheme. In all examples below, the spatial discretization is based
on mixed finite element method. To be more specific, we use the $P_{2}$ element for the 
angular velocity, the inf--sup stable $\boldsymbol{P}_{2}/P_{1}$ element
for the velocity and pressure. We always fix the mesh size with $h=1/150$ so that the spatial
discretization error is negligible compared to the time discretization error. The numerical experiments
are carried out using the finite element software FreeFem++ \citep{Hecht2012}.
Without further specified, we take $\jmath=1$, $c_{1}=2$ and $c_{2}=1$.

\begin{example}[Convergence test]
In this example, the computational domain is taken as $\Omega=(0,1)^{2}$
and the final time is chosen as $T=1$. Consider the following solution
to the MNS equations with external forces $\boldsymbol{f}$ and $g$
in the momentum equation and angular momentum equation,
\[
\boldsymbol{u}=(\sin t\sin^{2}(\pi x)\sin(2\pi y),-\sin t\sin(2\pi x)\sin^{2}(\pi y)),\, p=\sin t\sin(\pi x)\sin(\pi y),\, w=\sin t\sin^{2}(\pi x)\sin^{2}(\pi y).
\]
\end{example}
In Tables \ref{tab:Nu1}-\ref{tab:Nu1e-2}, we present the numerical
results for $\nu=\nu_{r}=1$, 0.01 and 0.001. From these tables, we
observe that the scheme achieve the expected convergence rates in
time, which are consistent with the error estimates in Theorems \ref{thm: error_estimate_uwq}
and \ref{thm:error_estimate_p}.

\begin{table}
\begin{centering}
\caption{Errors and convergence rates for the MNS equations with $\nu=\nu_{r}=1$.
\label{tab:Nu1}}
\par\end{centering}
\centering{}%
\begin{tabular}{|c|c|c|c|c|c|c|}
\hline 
$\tau$ & $\left\Vert e_{\boldsymbol{u}}^{N}\right\Vert$ & $\left\Vert \nabla e_{\boldsymbol{u}}^{N}\right\Vert$ & $\left\Vert e_{p}^{N}\right\Vert$ & $\left\Vert e_{w}^{N}\right\Vert$ & $\left\Vert \nabla e_{w}^{N}\right\Vert$ & $\left|e_{q}^{N}\right|$\\
\hline 
0.2 & 5.23e-3(---) & 3.78e-2(---) & 5.07e-2(---) & 1.60e-3(---) & 7.67e-3(---) & 3.40e-2(---)\\
\hline 
0.1 & 2.47e-3(1.08) & 1.79e-2(1.08) & 2.29e-2(1.15) & 7.86e-4(1.03) & 3.75e-3(1.03) & 1.77e-2(0.94)\\
\hline 
0.05 & 1.20e-3(1.04) & 8.71e-3(1.04) & 1.07e-2(1.09) & 3.89e-4(1.02) & 1.85e-3(1.01) & 9.01e-3(0.97)\\
\hline 
0.025 & 5.91e-4(1.02) & 4.31e-3(1.02) & 5.16e-3(1.05) & 1.93e-4(1.01) & 9.38e-4(0.99) & 4.55e-3(0.99)\\
\hline 
\end{tabular}
\end{table}

\begin{table}
\begin{centering}
\caption{Errors and convergence rates for the MNS equations with $\nu=\nu_{r}=$0.1.
\label{tab:Nu1e-1}}
\par\end{centering}
\centering{}%
\begin{tabular}{|c|c|c|c|c|c|c|}
\hline 
$\tau$ & $\left\Vert e_{\boldsymbol{u}}^{N}\right\Vert$ & $\left\Vert \nabla e_{\boldsymbol{u}}^{N}\right\Vert$ & $\left\Vert e_{p}^{N}\right\Vert$ & $\left\Vert e_{w}^{N}\right\Vert$ & $\left\Vert \nabla e_{w}^{N}\right\Vert$ & $\left|e_{q}^{N}\right|$\\
\hline 
0.2 & 9.34e-3(---) & 6.78e-2(---) & 4.98e-2(---) & 8.89e-4(---) & 4.03e-3(---) & 3.40e-2(---)\\
\hline 
0.1 & 4.64e-3(1.01) & 3.39e-2(1.01) & 2.27e-2(1.13) & 4.54e-4(0.97) & 2.07e-3(0.97) & 1.77e-2(0.94)\\
\hline 
0.05 & 2.31e-3(1.01) & 1.69e-2(1.01) & 1.07e-2(1.09) & 2.29e-4(0.99) & 1.05e-3(0.97) & 9.01e-3(0.97)\\
\hline 
0.025 & 1.15e-3(1.00) & 8.40e-3(1.00) & 5.18e-3(1.05) & 1.15e-4(0.99) & 5.52e-4(0.94) & 4.55e-3(0.99)\\
\hline 
\end{tabular}
\end{table}

\begin{table}
\begin{centering}
\caption{Errors and convergence rates for the MNS equations with $\nu=\nu_{r}=$0.01.
\label{tab:Nu1e-2}}
\par\end{centering}
\centering{}%
\begin{tabular}{|c|c|c|c|c|c|c|}
\hline 
$\tau$ & $\left\Vert e_{\boldsymbol{u}}^{N}\right\Vert$ & $\left\Vert \nabla e_{\boldsymbol{u}}^{N}\right\Vert$ & $\left\Vert e_{p}^{N}\right\Vert$ & $\left\Vert e_{w}^{N}\right\Vert$ & $\left\Vert \nabla e_{w}^{N}\right\Vert$ & $\left|e_{q}^{N}\right|$\\
\hline 
0.2 & 2.41e-2(---) & 2.06e-1(---) & 5.54e-2(---) & 7.59e-4(---) & 3.42e-3(---) & 3.40e-2(---)\\
\hline 
0.1 & 1.21e-2(1.00) & 1.01e-2(1.03) & 2.69e-2(1.04) & 3.90e-4(0.96) & 1.77e-3(0.96) & 1.77e-2(0.94)\\
\hline 
0.05 & 6.05e-3(1.00) & 4.97e-2(1.02) & 1.31e-2(1.03) & 1.97e-4(0.98) & 9.07e-4(0.97) & 9.01e-3(0.97)\\
\hline 
0.025 & 3.03e-3(1.00) & 2.47e-2(1.01) & 6.51e-3(1.02) & 9.92e-4(0.99) & 4.83e-4(0.91) & 4.55e-3(0.99)\\
\hline 
\end{tabular}
\end{table}

\begin{example}[Stability test]
 This example is to test the energy stability of the proposed scheme.
For this end, we take the domain as $\Omega=(0,1)^{2}$ and set the
initial conditions for $\boldsymbol{u}$ and $w$ to be 
\[
\boldsymbol{u}^{0}=(x^{2}(x-1)^{2}y(y-1)(2y-1),-y^{2}(y-1)^{2}x(x-1)(2x-1)),\quad w^{0}=\sin(\pi x)\sin(\pi y).
\]
\end{example}
We carry out the numerical experiments with different physical parameters
and different time-steps. Figure \ref{fig:Energy} presents the time
evolutions of the discrete energy with the finial time $T=5$. We
observe that all energy curves decay monotonically, which numerically
confirms that our scheme is unconditionally energy stable.

\begin{figure}
\hfill{}\subfloat[$\nu=\nu_{r}=0.1$]{\begin{centering}
\includegraphics[scale=0.5]{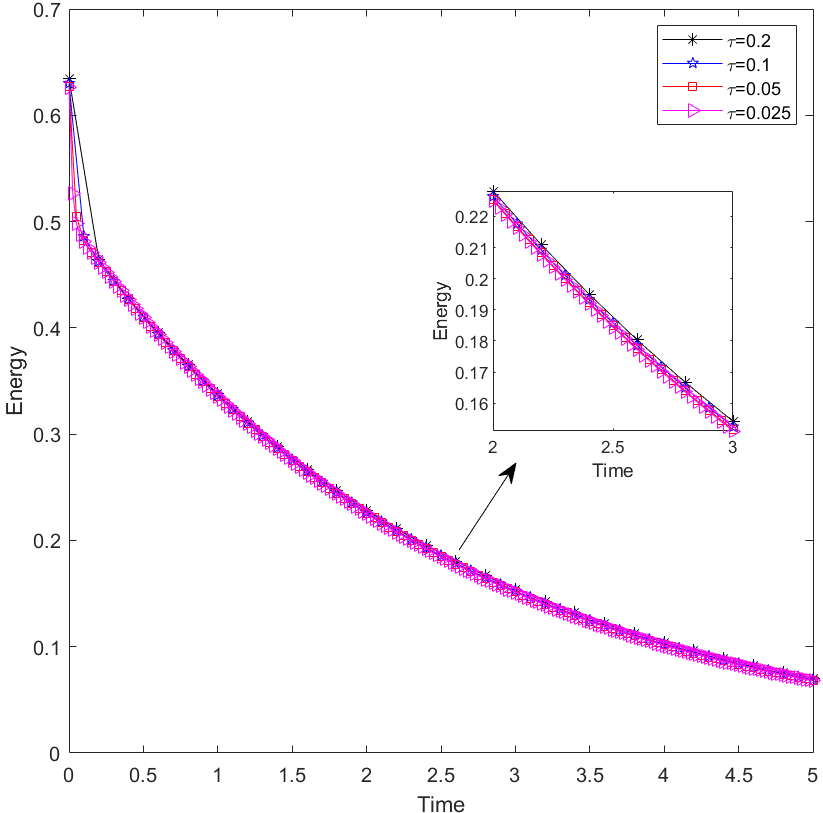}
\par\end{centering}
}\hfill{}\subfloat[$\nu=\nu_{r}=0.01$]{\begin{centering}
\includegraphics[scale=0.5]{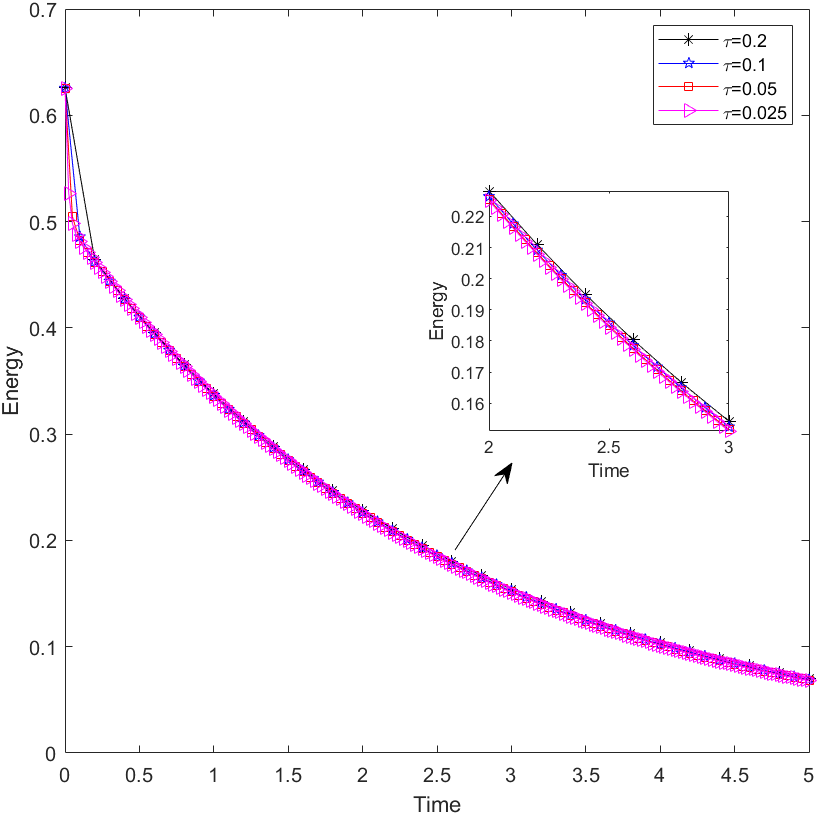}
\par\end{centering}
}\hfill{}

\caption{Time evolution of the energy with different time step sizes.\label{fig:Energy}}
\end{figure}

\begin{example}[Stirring of a Passive Scalar]
This example is to compute a realistic example about the stirring
of a passive scalar. To simulate this example, we supplement the
MNS equations \eqref{eq:MNS_model} with the following convection
equation:
\begin{equation}
\phi_{t}+\boldsymbol{u}\cdot\nabla\phi=0,\label{eq:convect}
\end{equation}
where $\boldsymbol{u}$ is the velocity from the MNS equations \eqref{eq:MNS_model},
$\phi$ denotes the passive scalar whose value does not affect the
flow. Since there is no diffusion in \eqref{eq:convect}, thus mixing
depends only on the flow pattern. The computational domain is set
by $\Omega=(-1,1)^{2}$, the time-step is chosen as $\tau=0.01$
and the final time is taken as $T=25$. The angular momentum equation
is supplemented by an external force $g=25\left(x-1\right)$, and
the initial conditions are described by $\boldsymbol{u}^{0}=\boldsymbol{0}$,
$w^{0}=0$ and 
\[
\phi(x,0)=\begin{cases}
1, & x_{2}<0,\\
0, & x_{2}\geq0.
\end{cases}
\]
The profile of the initial scalar is shown in Figure \ref{fig:Init}.
It is remarked that the MNS equations and the convection equation
are not solved simultaneously in our implementation. At every time-step,
we first solve the MNS equations using the proposed schemes, and then
we solve the convection equation \eqref{eq:convect} by using characteristics-Galerkin
method with $P_{2}$ finite element. 
\end{example}

\begin{figure}
\begin{centering}
\includegraphics[scale=0.4]{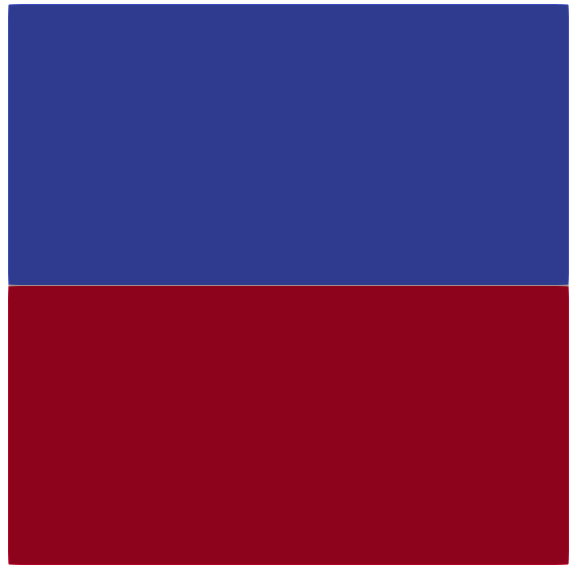}
\par\end{centering}
\caption{Initial profile of the scalar $\phi^{0}(x)$.\label{fig:Init}}
\end{figure}

The evolutions of $\phi$ with different values of $\nu=\nu_{r}$
are shown in Figures \ref{fig:Nu1}-\ref{fig:Nu1e-2}. Since a linear
velocity is generated by the applied torque, the scalar begins
to convect by the flow and we can observe the evolution of the variable
$\phi$. We can also see that the mixing of $\nu=\nu_{r}=0.1$ is
most fast one, and $\nu=\nu_{r}=0.001$ is most slow one. The obtained
results coincide well with those discussed in \citep{Nochetto2014,Salgado2015}.

\begin{figure}
\hfill{}\subfloat[$t=1$]{\begin{centering}
\includegraphics[scale=0.3]{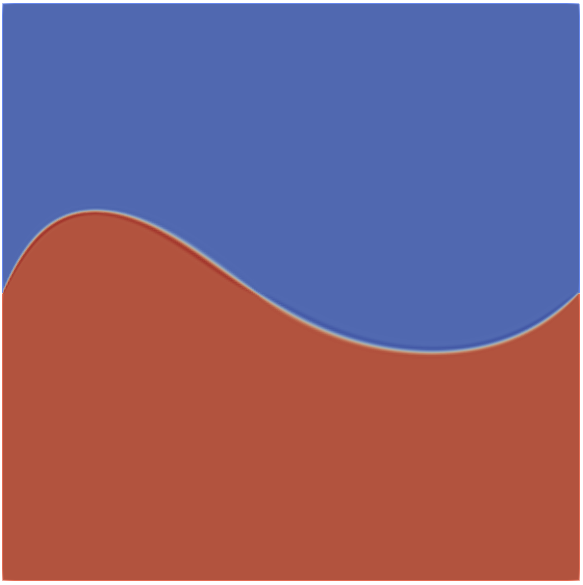}
\par\end{centering}

}\hfill{}\subfloat[$t=5$]{\begin{centering}
\includegraphics[scale=0.3]{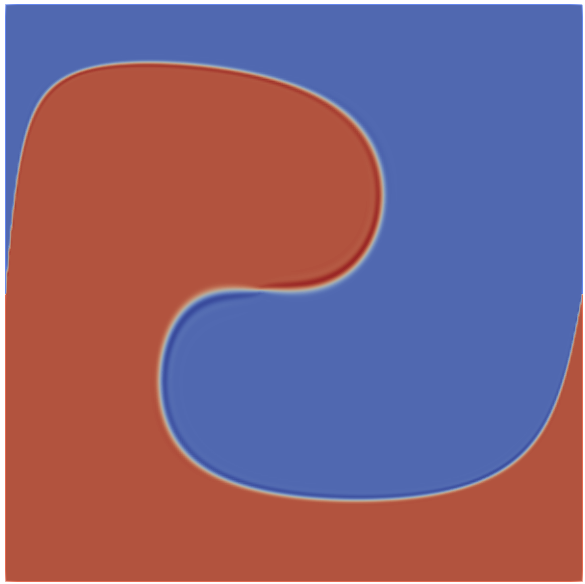}
\par\end{centering}

}\hfill{}\subfloat[$t=7$]{\begin{centering}
\includegraphics[scale=0.3]{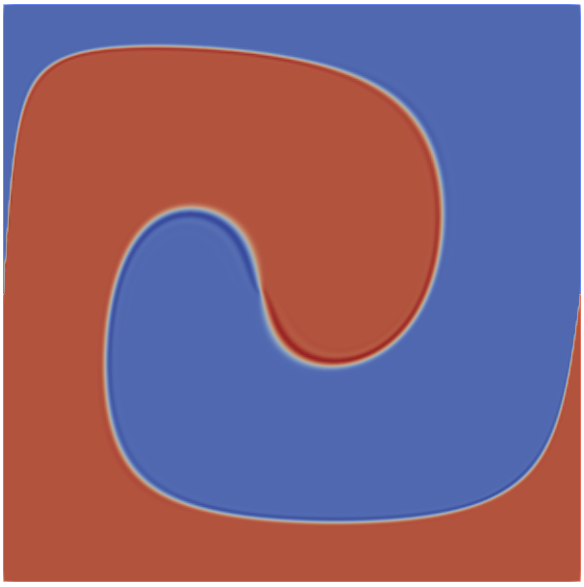}
\par\end{centering}

}\hfill{}

\hfill{}\subfloat[$t=10$]{\begin{centering}
\includegraphics[scale=0.3]{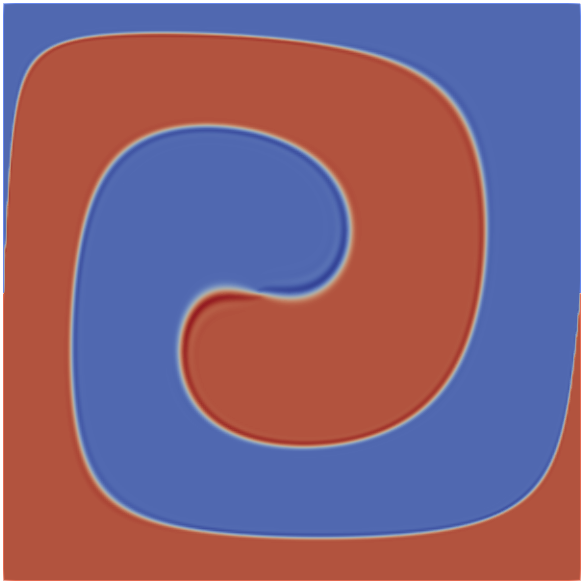}
\par\end{centering}
}\hfill{}\subfloat[$t=15$]{\begin{centering}
\includegraphics[scale=0.3]{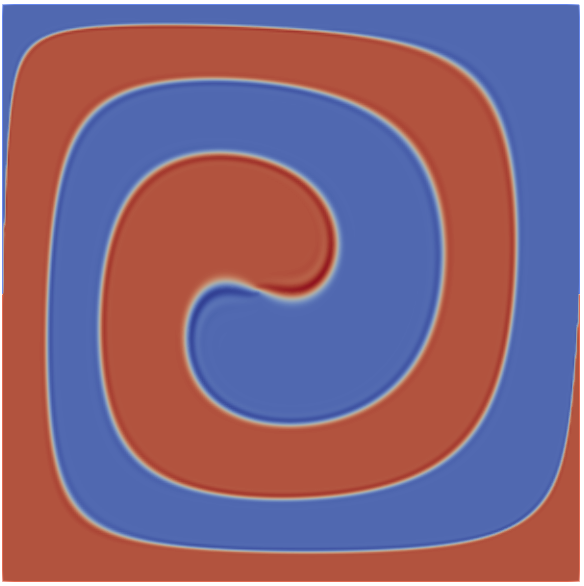}
\par\end{centering}
}\hfill{}\subfloat[$t=18$]{\begin{centering}
\includegraphics[scale=0.3]{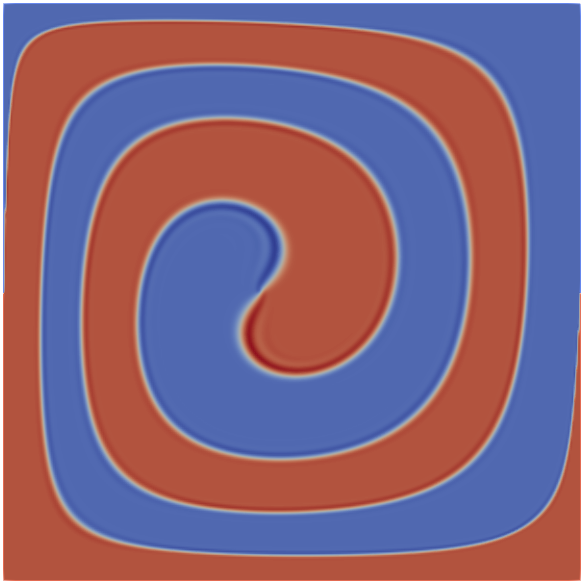}
\par\end{centering}
}\hfill{}

\hfill{}\subfloat[$t=20$]{\begin{centering}
\includegraphics[scale=0.3]{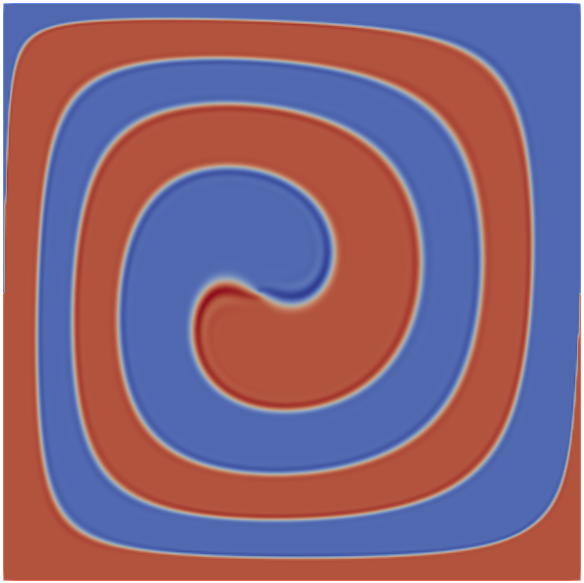}
\par\end{centering}
}\hfill{}\subfloat[$t=23$]{\begin{centering}
\includegraphics[scale=0.3]{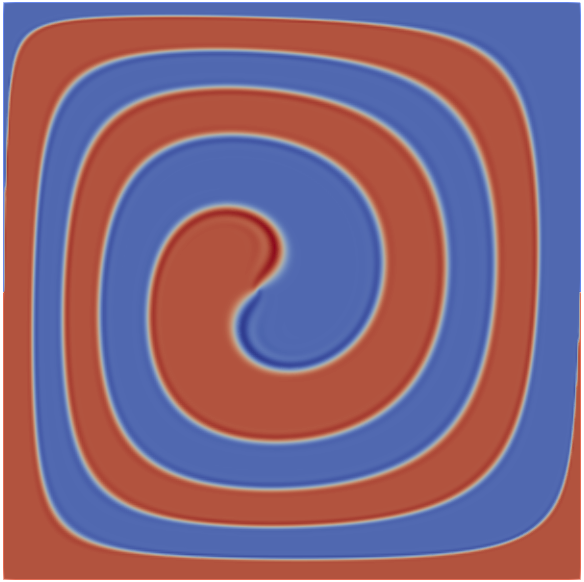}
\par\end{centering}
}\hfill{}\subfloat[$t=25$]{\begin{centering}
\includegraphics[scale=0.3]{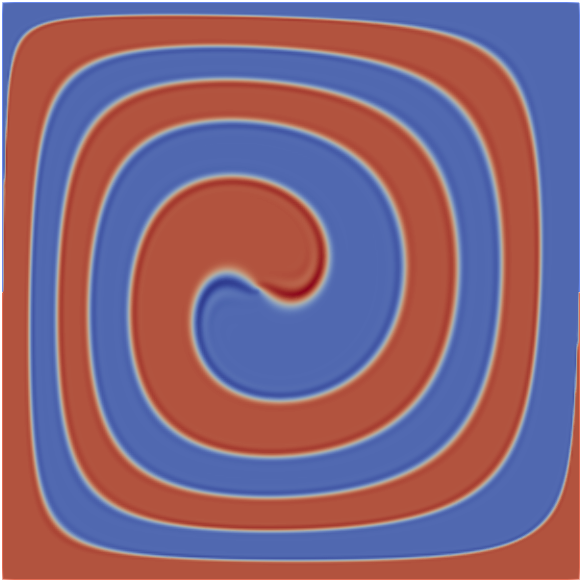}
\par\end{centering}
}\hfill{}

\caption{Mixing of a convected passive scalar $\phi$ by means of an applied
torque with $\nu=\nu_{r}=1$.\label{fig:Nu1}}

\end{figure}

\begin{figure}
\hfill{}\subfloat[$t=1$]{\begin{centering}
\includegraphics[scale=0.3]{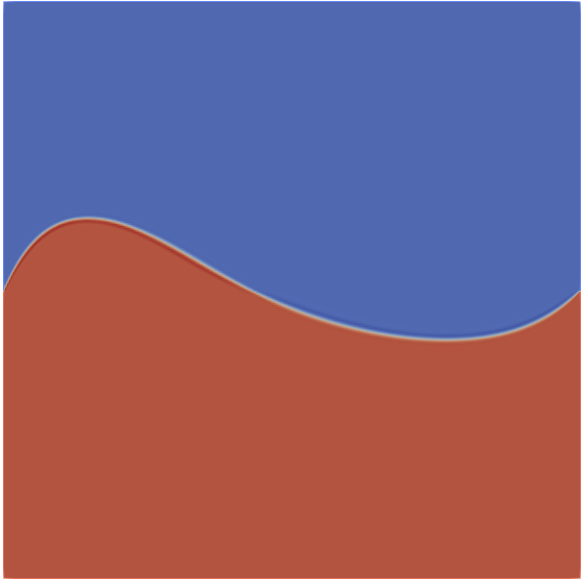}
\par\end{centering}
}\hfill{}\subfloat[$t=5$]{\begin{centering}
\includegraphics[scale=0.3]{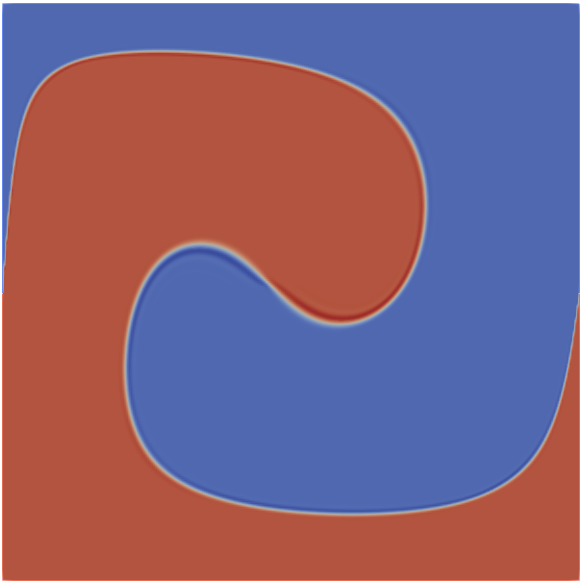}
\par\end{centering}
}\hfill{}\subfloat[$t=7$]{\begin{centering}
\includegraphics[scale=0.3]{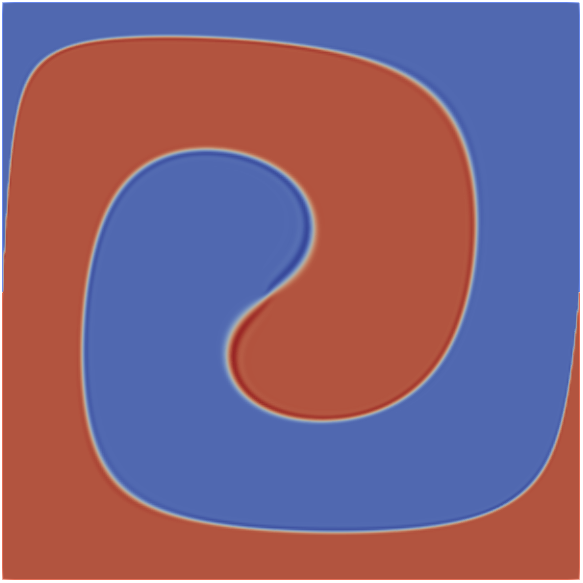}
\par\end{centering}
}\hfill{}

\hfill{}\subfloat[$t=10$]{\begin{centering}
\includegraphics[scale=0.3]{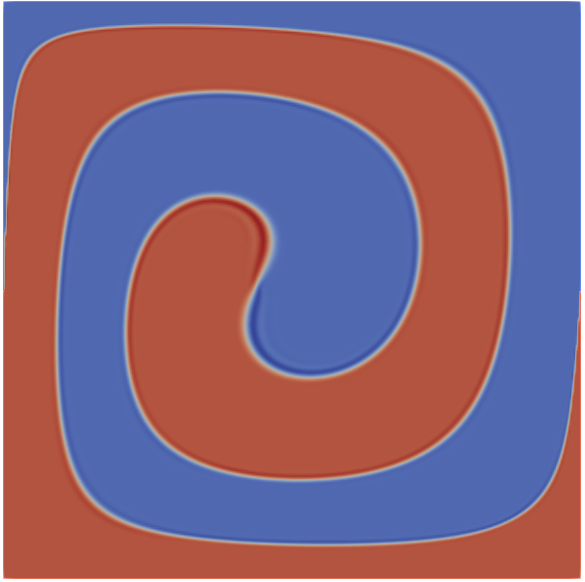}
\par\end{centering}
}\hfill{}\subfloat[$t=15$]{\begin{centering}
\includegraphics[scale=0.3]{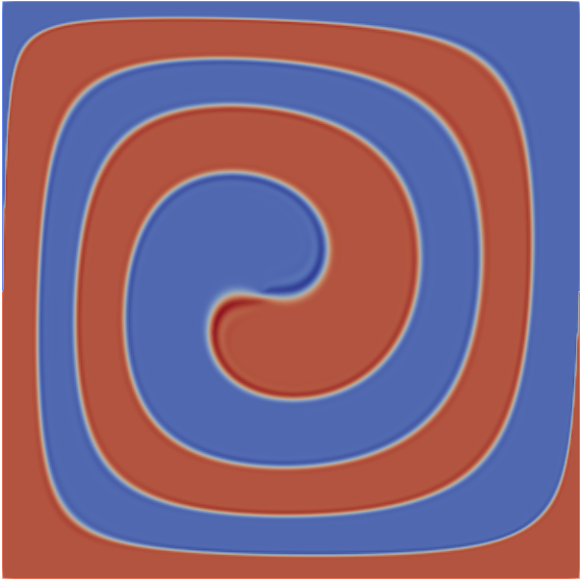}
\par\end{centering}
}\hfill{}\subfloat[$t=18$]{\begin{centering}
\includegraphics[scale=0.3]{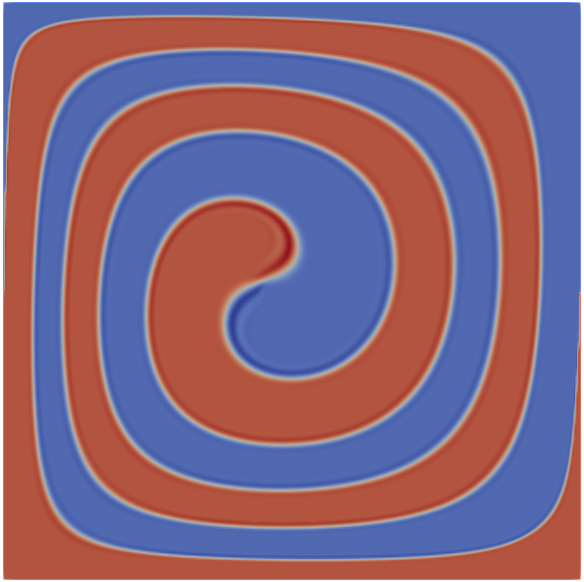}
\par\end{centering}
}\hfill{}

\hfill{}\subfloat[$t=20$]{\begin{centering}
\includegraphics[scale=0.3]{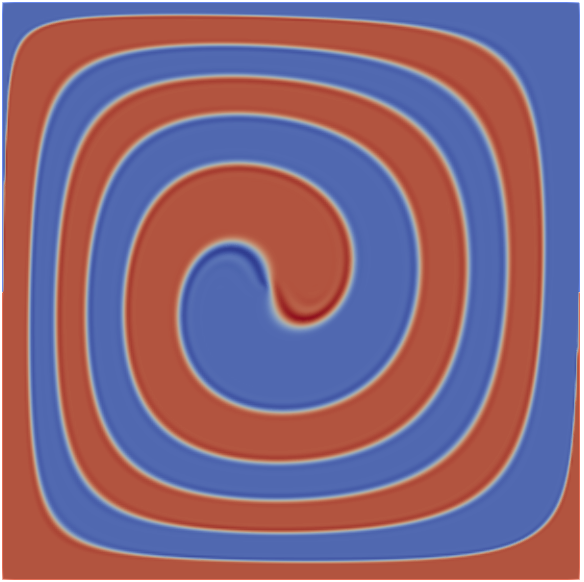}
\par\end{centering}
}\hfill{}\subfloat[$t=23$]{\begin{centering}
\includegraphics[scale=0.3]{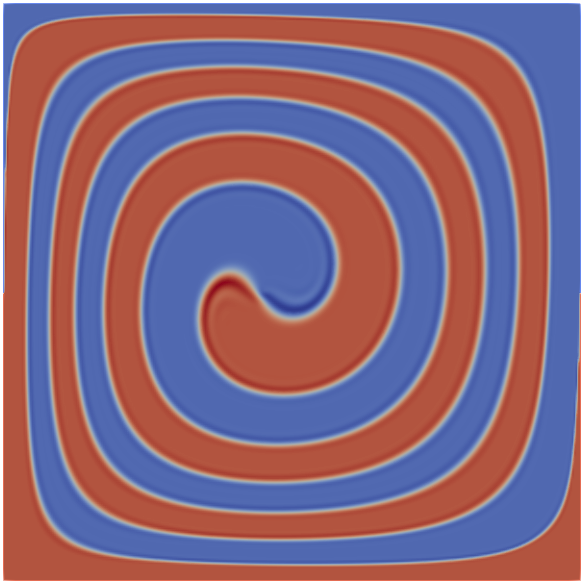}
\par\end{centering}
}\hfill{}\subfloat[$t=25$]{\begin{centering}
\includegraphics[scale=0.3]{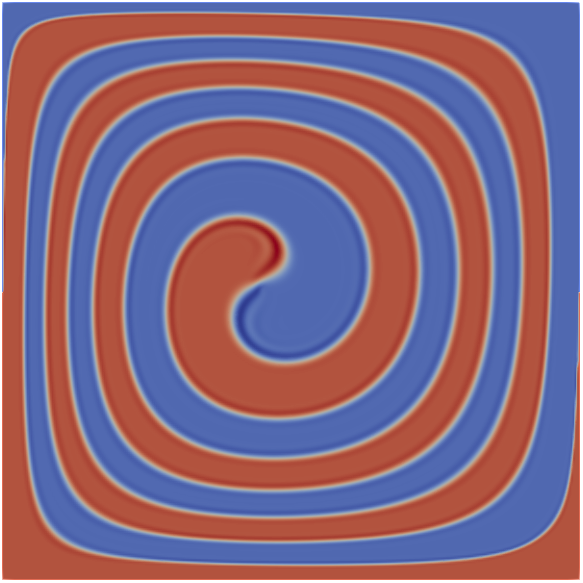}
\par\end{centering}
}\hfill{}

\caption{Mixing of a convected passive scalar $\phi$ by means of an applied
torque with $\nu=\nu_{r}=0.1$.\label{fig:Nu1e-1}}
\end{figure}

\begin{figure}
\hfill{}\subfloat[$t=1$]{\begin{centering}
\includegraphics[scale=0.3]{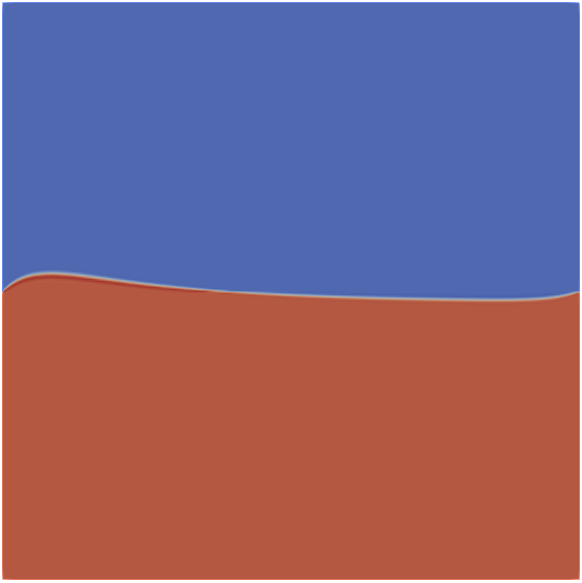}
\par\end{centering}
}\hfill{}\subfloat[$t=5$]{\begin{centering}
\includegraphics[scale=0.3]{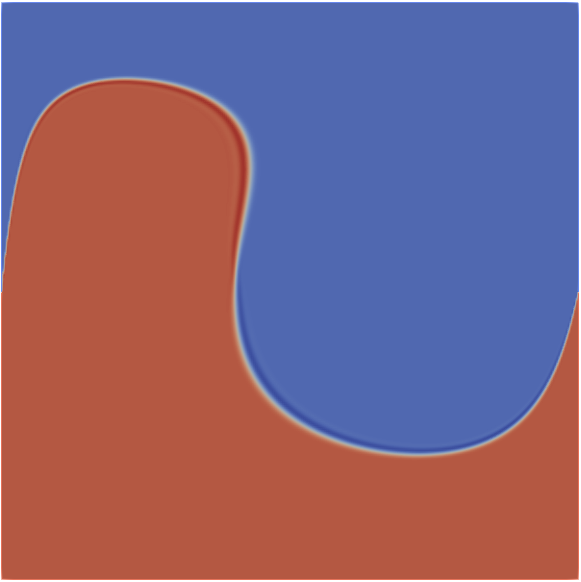}
\par\end{centering}
}\hfill{}\subfloat[$t=7$]{\begin{centering}
\includegraphics[scale=0.3]{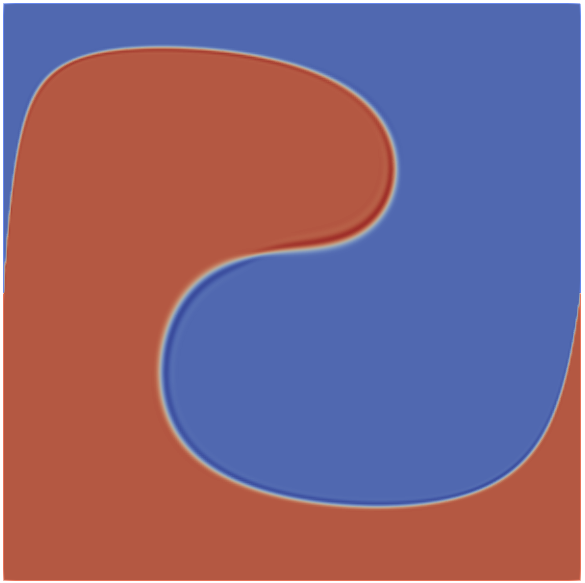}
\par\end{centering}
}\hfill{}

\hfill{}\subfloat[$t=10$]{\begin{centering}
\includegraphics[scale=0.3]{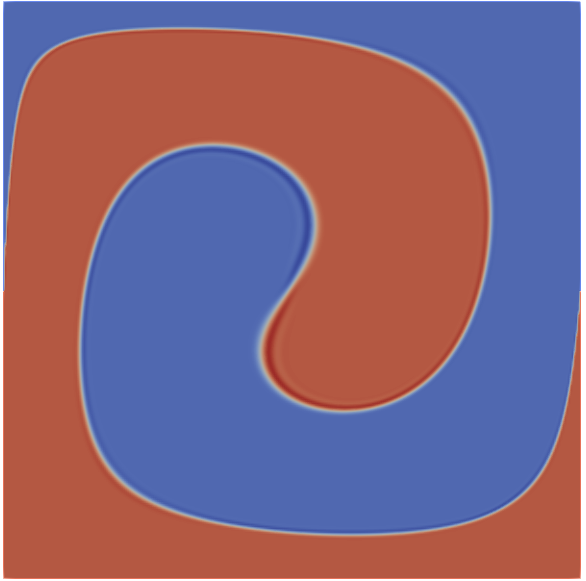}
\par\end{centering}
}\hfill{}\subfloat[$t=15$]{\begin{centering}
\includegraphics[scale=0.3]{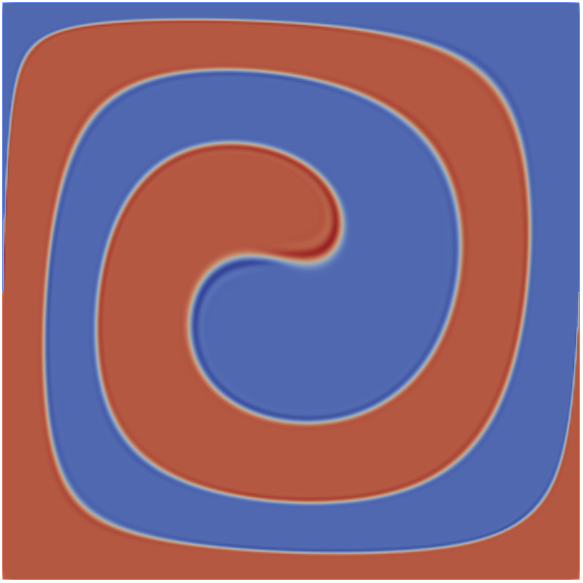}
\par\end{centering}
}\hfill{}\subfloat[$t=18$]{\begin{centering}
\includegraphics[scale=0.3]{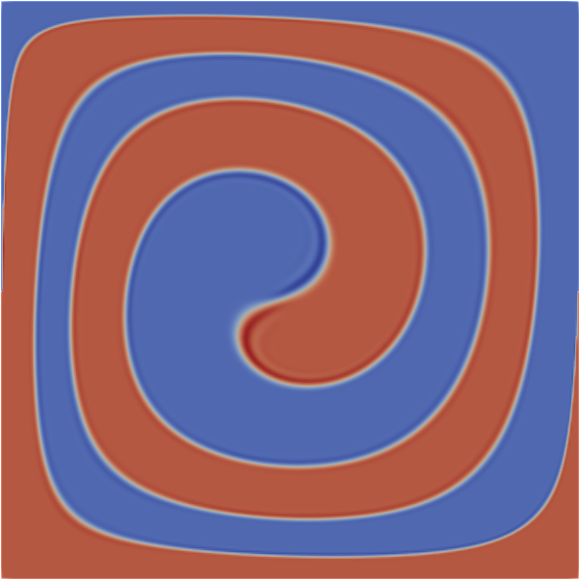}
\par\end{centering}
}\hfill{}

\hfill{}\subfloat[$t=20$]{\begin{centering}
\includegraphics[scale=0.3]{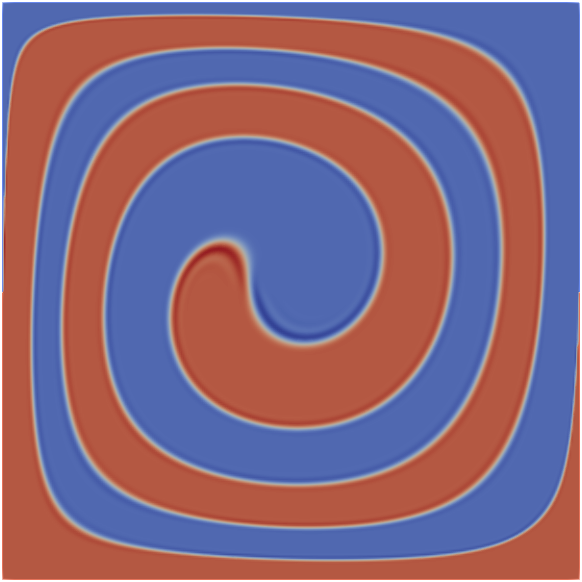}
\par\end{centering}
}\hfill{}\subfloat[$t=23$]{\begin{centering}
\includegraphics[scale=0.3]{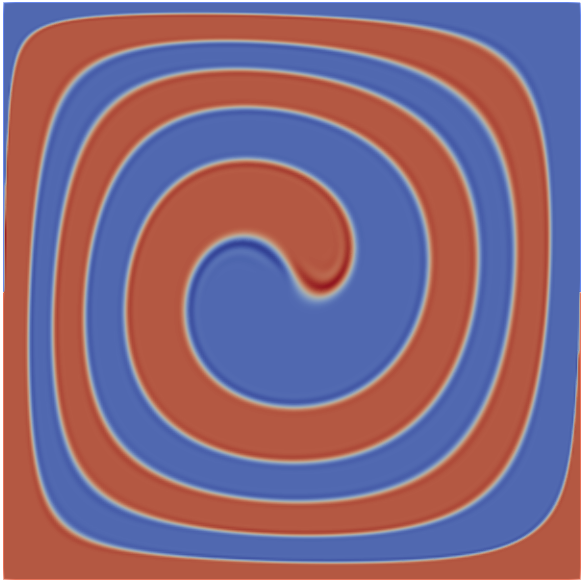}
\par\end{centering}
}\hfill{}\subfloat[$t=25$]{\begin{centering}
\includegraphics[scale=0.3]{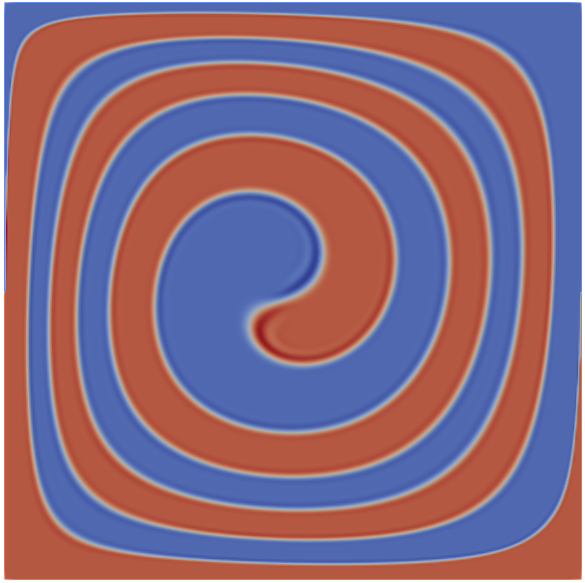}
\par\end{centering}
}\hfill{}

\caption{Mixing of a convected passive scalar $\phi$ by means of an applied
torque with $\nu=\nu_{r}=0.01$.\label{fig:Nu1e-2}}
\end{figure}

\begin{figure}
\hfill{}\subfloat[$t=1$]{\begin{centering}
\includegraphics[scale=0.3]{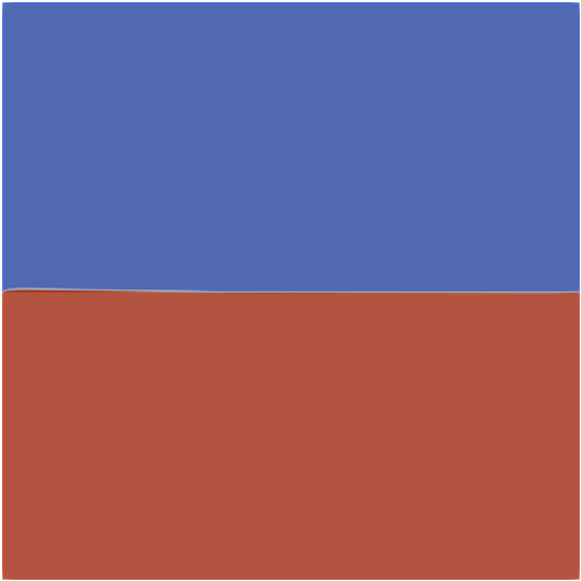}
\par\end{centering}
}\hfill{}\subfloat[$t=5$]{\begin{centering}
\includegraphics[scale=0.3]{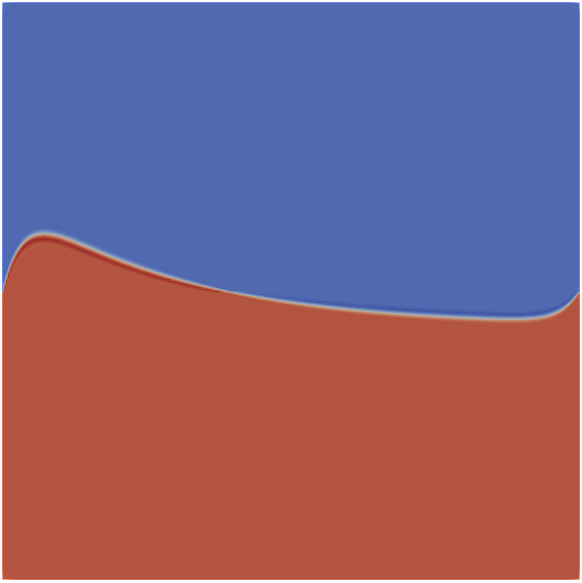}
\par\end{centering}
}\hfill{}\subfloat[$t=7$]{\begin{centering}
\includegraphics[scale=0.3]{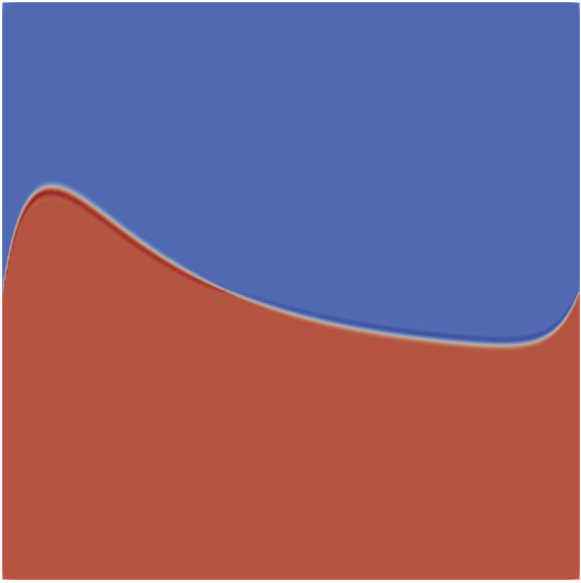}
\par\end{centering}
}\hfill{}

\hfill{}\subfloat[$t=10$]{\begin{centering}
\includegraphics[scale=0.3]{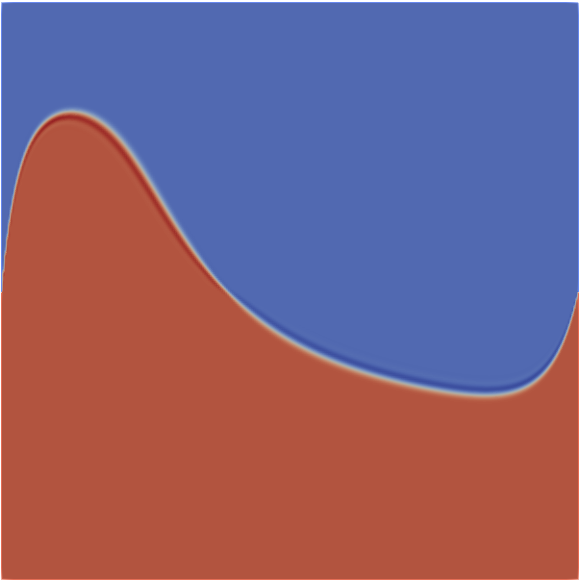}
\par\end{centering}
}\hfill{}\subfloat[$t=15$]{\begin{centering}
\includegraphics[scale=0.3]{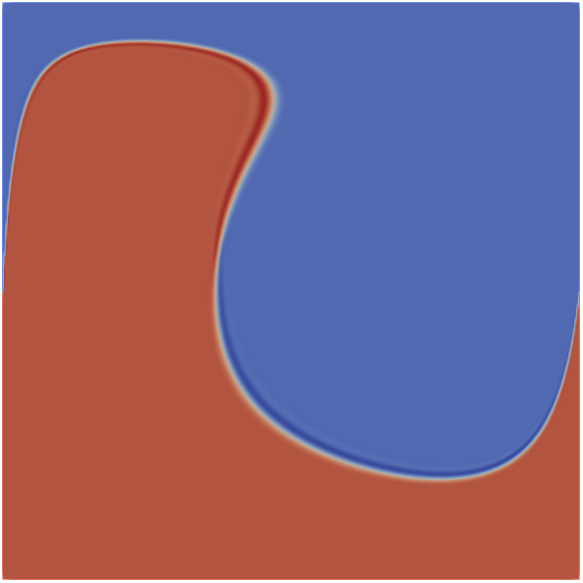}
\par\end{centering}
}\hfill{}\subfloat[$t=18$]{\begin{centering}
\includegraphics[scale=0.3]{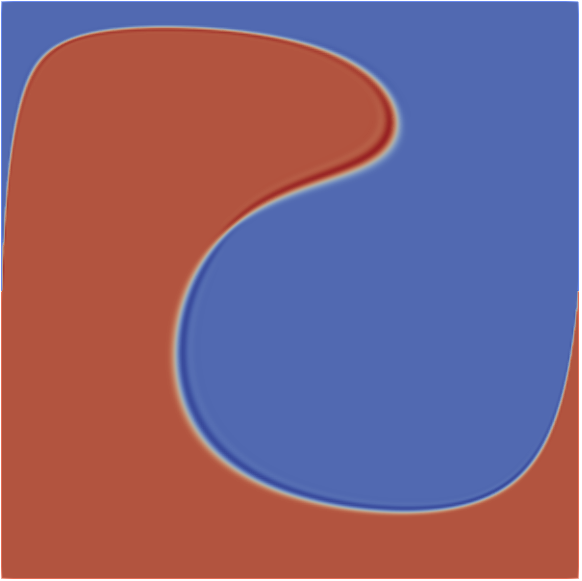}
\par\end{centering}
}\hfill{}

\hfill{}\subfloat[$t=20$]{\begin{centering}
\includegraphics[scale=0.3]{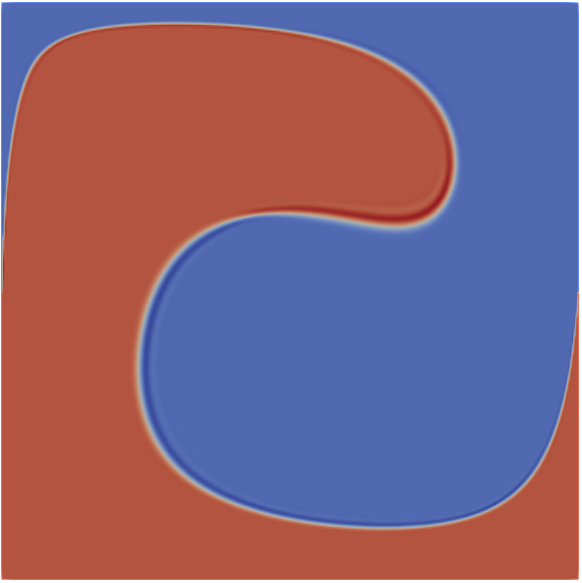}
\par\end{centering}
}\hfill{}\subfloat[$t=23$]{\begin{centering}
\includegraphics[scale=0.3]{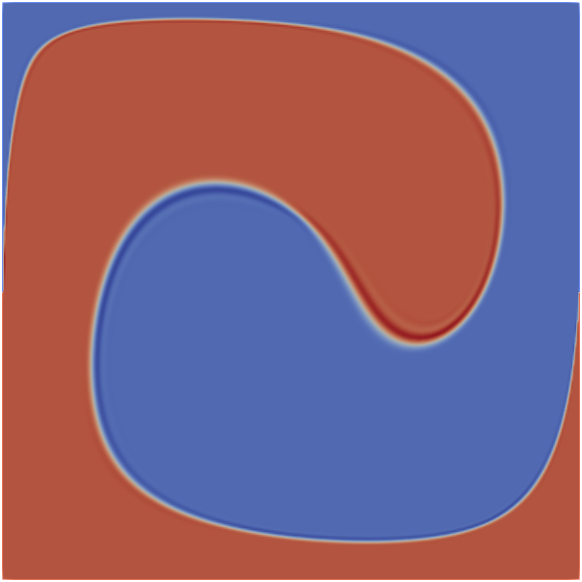}
\par\end{centering}
}\hfill{}\subfloat[$t=25$]{\begin{centering}
\includegraphics[scale=0.3]{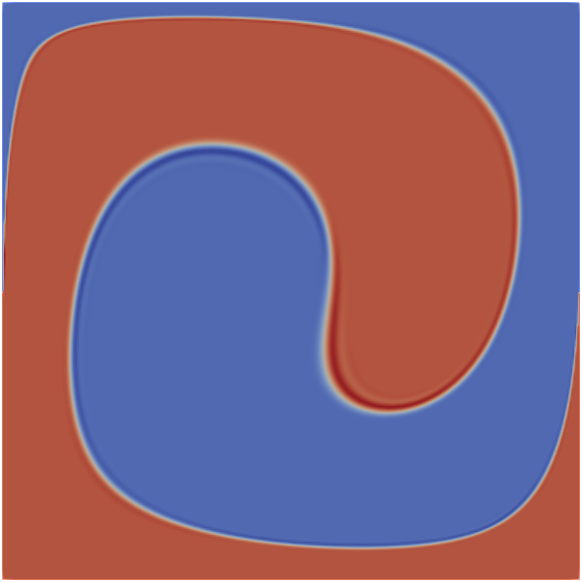}
\par\end{centering}
}\hfill{}

\caption{Mixing of a convected passive scalar $\phi$ by means of an applied
torque with $\nu=\nu_{r}=0.001$.\label{fig:Nu1e-3}}
\end{figure}

\section{Concluding remarks\label{sec:Conclud}}

In this paper, we propose and analyze a first-order discretization
scheme in time for the MNS equations. The scheme is based on the SAV
approach for the convective terms and some subtle implicit-explicit
treatments for the coupling terms. The attractive points of this scheme
are it is decoupled, linear, unconditionally energy stable and easy
to implement. We further derive rigorous error estimates in the two-dimensional
case without any condition on the time step. Some numerical
experiments are given to confirm the theoretical findings and show
the performances of the scheme. In the further, the error estimates
in three dimensions and the high order SAV schemes will be considered. 

\section*{References}

\bibliographystyle{unsrt}
\bibliography{Ref}

\end{document}